\newtheorem{theorem}{Theorem}[section]
\newtheorem{lemma}[theorem]{Lemma}
\newtheorem{cor}[theorem]{Corollary}
\newtheorem{prop}[theorem]{Proposition}
\theoremstyle{remark}
\newtheorem{remark}[theorem]{Remark}
\newcommand {\SN} {{\mathbb N}}
\newcommand {\SR} {{\mathbb R}}
\newcommand {\SZ} {{\mathbb Z}}
\newcommand {\SC} {{\mathbb C}}
\newcommand {\SU} {{\mathbb U}}
\newcommand {\SD} {{\mathbb D}}
\newcommand{\eq}[2]{ \begin{equation} \label{#1}\begin{split} #2 \end{split} \end{equation} }
\newcommand{\al}[1]{\begin{align} #1 \end{align} }
\newcommand{\als}[1]{\begin{align*} #1 \end{align*} }
\newcommand{\fl}[1]{\left\lfloor#1\right\rfloor}
\newcommand{\nn}{\nonumber \\}
\newcommand{\ds}{\displaystyle}
\newcommand{\bv}\boldsymbol{}
\numberwithin{equation}{section}
\begin{document}

\title{On multiplicative functions which are small on average}
\author{Dimitris Koukoulopoulos}
\address{D\'epartement de math\'ematiques et de statistique\\
Universit\'e de Montr\'eal\\
CP 6128 succ. Centre-Ville\\
Montr\'eal, QC H3C 3J7\\
Canada}
\email{{\tt koukoulo@dms.umontreal.ca}}

\subjclass[2010]{11N56, 11M20}
\keywords{Mutliplicative functions, Hal\'asz's theorem, Siegel zeroes}

\date{\today}

\begin{abstract} Let $f$ be a completely multiplicative function that assumes values inside the unit disc. We show that if $\sum_{n\le x}f(n)\ll x/(\log x)^A$, $x\ge2$, for some $A>2$, then either $f(p)$ is small on average or $f$ {\it pretends to be} $\mu(n)n^{it}$ for some $t$.
\end{abstract}

\maketitle

\tableofcontents


\section{Introduction}\label{intro} A multiplicative function is an arithmetic function $f:\SN\to\SC$ which satisfies the functional equation $f(mn)=f(m)f(n)$ whenever $(m,n)=1$. Many central problems in number theory such as questions about the distribution of prime numbers can be phrased in terms of the average behaviour of multiplicative functions. A question of particular importance is when a given multiplicative function $f$ has mean value 0. This problem was solved by Hal\'asz \cite{hal2,hal3} when $f$ assumes values inside the unit circle $\SU=\{z\in\SC:|z|\le1\}$. His result states that unless $f$ \textit{pretends to be} $n^{it}$ for some $t\in\SR$, in the sense that 
\[
\sum_p \frac{1- \Re( f(p) p^{-it}) }  p <   \infty,
\]
then $f$ is 0 on average; the converse is also true. Hal\'asz also gave a quantitative version of his result and various authors (\cite{M78}, \cite{gs1}, \cite{T}) improved on it. The state of the art on this problem is Theorem \ref{halasz} below. Here and for the rest of this paper, given two multiplicative functions $f,g:\SN\to\SU$ and $x\ge y\ge1$, we set 
\[
\SD(f,g;y,x) = \left(\sum_{y<p\le x}\frac{1-\Re(f(p)\overline{g(p)})}p\right)^{1/2}.
\]
This quantity measures a certain ``distance'' between $f$ and $g$; as a matter of fact, it satisfies the triangle inequality (see Lemma \ref{triangle}).


\begin{theorem}\label{halasz} Let $f:\SN\to\SU$ be a multiplicative function and consider $x\ge1$ and $T\ge1$. Then we have that 
\[
\frac1x \sum_{n\le x}f(n)\ll   \frac{M_f(x;T)+1}{e^{M_f(x;T)}}   +\frac1T,
	\quad\text{where}\quad
	 M_f(x;T)=\min_{|t|\le T}\SD^2(f(n),n^{it};1,x).
\]
\end{theorem}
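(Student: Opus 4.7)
The plan is to follow the analytic Hal\'asz approach via the Dirichlet series $F(s)=\sum_{n\ge 1}f(n)n^{-s}$, which converges absolutely for $\Re s>1$. First, I would reduce to bounding a smoothed partial sum, say $L(x)=\sum_{n\le x}f(n)\log(x/n)$: by partial summation, $\frac1x\sum_{n\le x}f(n)$ is controlled by $L$ evaluated at nearby points via a dyadic comparison, while $L$ itself admits the clean Perron representation
\[
L(x)=\frac{1}{2\pi i}\int_{c-i\infty}^{c+i\infty}F(s)\frac{x^s}{s^2}\,ds,\qquad c>1.
\]

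Next, I would shift the contour to the line $\Re s=1+1/\log x$ and truncate at height $|\Im s|\le T$. The tail, estimated using the trivial bound $|F|\ll\log x$ on the shifted line and the $|s|^{-2}$ decay, contributes the $O(1/T)$ term in the theorem.

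The heart of the matter is a pointwise bound for $|F|$ on the shifted line. Using $\log|F(s)|=\Re\log F(s)$ together with the Euler product, one checks that
\[
\log\bigl|F(1+1/\log x+it)\bigr|\le \log\log x-\SD^2(f(n),n^{it};1,x)+O(1),
\]
yielding $|F(1+1/\log x+it)|\ll (\log x)\exp(-\SD^2(f(n),n^{it};1,x))$. Substituted crudely into the contour integral, this already gives a bound of the form $e^{-M}+1/T$ for $\frac1x\sum_{n\le x}f(n)$, but it is missing the factor $M+1$ required by the theorem.

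Extracting the $M+1$ is the main obstacle. My strategy is to split the truncated integral into a near window $|t-t_0|\le 1/\log x$ around an optimizer $t_0$ of $M_f(x;T)$ and its complement. On the complement, the triangle inequality for $\SD$ (Lemma \ref{triangle}) together with a classical lower bound for $\SD^2(n^{i(t-t_0)},1;1,x)$ growing logarithmically in $|t-t_0|\log x$ shows that the integrand decays faster than $e^{-M}$ by an extra factor that is summable on a logarithmic scale. Integrating this refinement against the $|s|^{-2}$ measure produces a gain equivalent to the $M+1$ factor in the final bound. The remaining work, namely the smoothing, the bootstrap from $L$ to $\sum_{n\le x}f(n)$, and the handling of prime-power contributions in the Euler product estimate (important since $f$ is merely multiplicative, not completely multiplicative), is routine but requires careful bookkeeping.
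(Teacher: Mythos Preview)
The paper does not prove Theorem~\ref{halasz}; it is quoted as background with references to \cite{M78}, \cite{gs1}, \cite{T}, so there is no in-paper argument to compare against. The paper does, however, say what it regards as the decisive mechanism in those proofs (see Subsection~\ref{overview} and the remark following \eqref{plancherel}): one works with $\sum_{n\le x}f(n)\log n$, passes to an integral involving $-F'(s)=\bigl(-F'/F\bigr)(s)\cdot F(s)$ on the line $\Re s=1+1/\log x$, and couples a sup-norm bound on $|F|$ with an $L^2$ bound on $F'/F$ coming from Montgomery's majorant principle (Lemma~\ref{mont}). That factorization is what produces the sharp $(M+1)e^{-M}$, and it is absent from your plan.

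Your proposed substitute---refining the pointwise bound on $|F|$ via the triangle inequality for $\SD$---does not close the gap. From Lemma~\ref{triangle} one only gets $\SD(f,n^{it})\ge \SD(n^{it},n^{it_0})-\sqrt{M}$, hence
\[
\SD^2(f,n^{it})-M\ \ge\ \SD(n^{it},n^{it_0})\,\bigl(\SD(n^{it},n^{it_0})-2\sqrt{M}\bigr),
\]
and this is no better than the trivial $\SD^2(f,n^{it})\ge M$ until $\SD^2(n^{it},n^{it_0})>4M$, i.e.\ roughly until $|t-t_0|\log x>e^{4M}$. On the range $1\le |t-t_0|\log x\le e^{4M}$ you are stuck with $|F|\ll(\log x)e^{-M}$ and no additional decay; integrating $|F|\,|s|^{-2}$ over that window (for bounded $t_0$) contributes on the order of $e^{3M}$, which swamps the target $(M+1)e^{-M}$. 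So the assertion that the integrand becomes ``summable on a logarithmic scale'' is not justified as written. A smaller but related point: the passage between $\sum_{n\le x}f(n)$ and the smoothed sum $L(x)$ is normally made through $S(x)\log x=L(x)+\sum_{n\le x}f(n)\log n$, which is exactly where $F'$ and its factorization enter; a ``dyadic comparison'' alone does not replace that step.
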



The generality of the above theorem is quite striking as it makes no assumptions for $f$ other than that its range of values is $\SU$. Nevertheless, the breadth of applicability of Theorem \ref{halasz} comes at a price: it can be shown that $M_f(x,T)\le\log\log x+O(1)$ (\cite{gs}), so the best bound on $\left|\sum_{n\le x}f(n)\right|$ that Theorem \ref{halasz} can yield is $O(x\log\log x/\log x$, where $c$ is some absolute constant. In the converse direction, Montgomery and Vaughan \cite{MV01} constructed for every $x\ge2$ a multiplicative function whose partial sum up to $x$ is of size $x\log\log x/\log x$, thus showing that Theorem \ref{halasz} is best possible. More recently, Granville and Soundararajan \cite{gs1} showed an explicit version of Theorem \ref{halasz} and constructed multiplicative functions whose summatory function achieves the bound in \cite{gs1} within a factor of 10. It is not very hard to construct slightly weaker but still almost extremal examples. Indeed, for every $M\ge3$, the completely multiplicative function $f$ defined by 
\eq{example}{
f(p) =\begin{cases}		
		1 	&\text{if}\ y<p\le 2y,\cr 
		0	&\text{otherwise}
	\end{cases}
}
satisfies the estimates 
\[
\sum_{n\le x}f(n) \ll \frac{x}{\log x} \prod_{p\le x}\left(1+\frac{f(p)}{p}\right)\ll \frac{x}{\log x}
	\quad\text{and}\quad 
M_f(x,T)=\log\log x+O(1)
\]
for all $x\ge3$ and $T\ge1$, the first one of which is a consequence of \cite[Theorem 5, p. 308]{T}. Moreover, when $x\in(3y/2,2y]$, we also have that
\[
\sum_{n\le x}f(n)=1+\sum_{y<p\le x}1	
	\sim	\frac x{\log x} - \frac{y}{\log y} 
	\asymp \frac{x}{\log x} 
	\asymp \frac{x}{e^{M_f(x;T)}}.
\]

Even though Theorem \ref{halasz} is optimal in this general setting, there are specific multiplicative functions whose partial sums satisfy (or are conjectured to satisfy) much sharper estimates than $\ll x\log\log x/\log x$. An important example is the M\"obius function, since controlling the size of its partial sums corresponds to estimating the error term in the prime number theorem. In order to understand the limitations of Hal\'asz's theorem better we study the following question: for which multiplicative functions $f$ there is a constant $A>0$ such that
\eq{small}{
\sum_{n\le x}f(n)\ll_A\frac x{(\log x)^A},
} 
for all $x\ge2$? For simplicity and in order to avoid technical issues at the prime 2, we assume further that $f$ is completely multiplicative (see Remark \ref{rk1} for further discussion about this assumption). The key observation towards understanding this problem is that if $f(p)$ is equal to $v\in\SU$ on average, then by the Selberg-Delange method \cite[Chap. II.5]{T} we expect that 
\eq{seldel}{
\sum_{n\le x}f(n) =  \left(  \frac{c_{f,v}}{\Gamma(v)} + o_f(1) \right)  x  (\log x)^{v-1}   \quad(x\to\infty),
} 
where $c_{f,v}$ is some non-zero constant and $\Gamma$ denotes Euler's Gamma function. Therefore, unless $v$ is a pole of $\Gamma$, relation\ \eqref{small} cannot hold for any $A>2\ge1-\Re(v)$. The only poles of $\Gamma$ in the unit circle are located at $-1$ and at $0$. If now $v=-1$, then $f$ looks like the M\"obius function $\mu$ which satisfies \eqref{small} by a quantitative form of the prime number theorem. Lastly, for the case $v=0$ Granville \cite{gs} showed that 
\eq{smallp}{
\sum_{p\le x}f(p)\log p  \ll		\frac x{(\log x)^B}	\quad(x\ge2)
	\qquad\implies\qquad 
\sum_{n\le x}f(n)  \ll   \frac x{(\log x)^B}\quad(x\ge2).
}

The above remarks seem to suggest that if \eqref{small} holds for some $A>2$, then the mean value of $f(p)$ has to be $-1$ or 0. However, this is obviously false, as the completely multiplicative function $(-1)^{\Omega(n)}n^{it}$ also satisfies \eqref{small} by the prime number theorem\footnote{$\Omega(n)$ denotes the number of prime divisors of $n$, counted with multiplicity.}. 
We make the refined guess that if \eqref{small} holds for some $A>2$, then either $f$ pretends to be $\mu(n)n^{it}$ for some $t$ or $f(p)$ is 0 on average. Theorem \ref{complex} below confirms our guess. 

We state our results taking into account the possibility that a multiplicative function might exhibit cancellation only past a certain point, say $Q$, which is related to the so-called analytic conductor of the associated $L$-function. So, instead of condition \eqref{small}, we assume that $f$ satisfies the estimate
\eq{small-Q}{
\left| \sum_{n\le x} f(n) \right| 
	\le \frac{x(\log Q)^{A-2}}{(\log x)^A} 	\quad(x\ge Q) ,
} 
for some constants $A\ge2$ and $Q\ge3$. Note that the condition that $A\ge2$ is necessary: if $A\in[0,1)\cup(1,2)$, then the completely multiplicative function $f(n)=(1-A)^{\Omega(n)}$ satisfies \eqref{small}, by \eqref{seldel}, but violates our guess: neither is $f(p)$ zero on average nor does $f$ pretend to be $\mu(n)n^{it}$ for some $t\in\SR$. Similarly, when $A=1$, the function given by \eqref{example} provides a counterexample to our guess.

Before we state Theorem \ref{complex}, we let, with a slight abuse of notation,
\eq{Qt}{
Q_t  =  \exp\left\{ 2(\log Q)(1+|t|)^{\frac{1}{A-2}} \right\} \ge Q^2
}
and
\eq{N(x;T)}{
N(x;T)&= \min_{ |t|\le T,\, Q_t\le x}
	\left\{   \log\log Q_t 
	+   \SD^2 ( f(n),\mu(n)n^{it}; Q_t,x) 	\right\} \\
	&\ge \max\left\{ \frac{  M_{\mu f}(x;T) }{2}  + O(1) , 2\log Q \right\}.
}

\begin{theorem}\label{complex}
Let $\epsilon>0$, $Q\ge 3$ and $f:\SN\to\SU$ be a completely multiplicative function that satisfies \eqref{small-Q} for some $A\ge2+\epsilon$.
\begin{enumerate}
\item 
For $x\ge Q^2$ and $T\ge1$, we have that
\[
\frac{1}{x} \sum_{p\le x}f(p)\log p 
	\ll_{A,\epsilon} (N(x;T) - \log\log Q)^2 \left(\frac{\log Q}{e^{N(x;T)}} \right)^{ B }  +  \frac{1}{T} ,
\]
where 
\eq{B}{
B = \begin{cases} 
 		(A-2)/(2A-2)		&\text{if}\ 2<A<3,\cr
		3(A-2)/(2A-2)		&\text{if}\ 3\le A<4,\cr
		2A/3-2 			&\text{if}\ A\ge 4.
	\end{cases}
}

\item Assume that $L(1+it_0,f)=0$ for some $t_0\in\SR$. Then we have that
\[
\frac{1}{x} \sum_{p\le x}( 1+ \Re( f(p)p^{-it_0})  )\log p	
	\ll_{A,\epsilon}  \left( \frac{\log Q_{t_0}}{\log x} \right)^{A-2} 	
	\quad (x\ge Q_{t_0})
\]
and, consequently\footnote{Here we use the Cauchy-Schwarz inequality and the fact that $|1+z|^2\le2(1+\Re(z))$ for $z\in\SU$.},
\[
\frac{1}{x}  \sum_{p\le x} | 1+ f(p)p^{-it_0}  | \log p	
	\ll_{A,\epsilon}\left( \frac{\log Q_{t_0}}{\log x} \right)^{\frac{A-2}{2}} 	
	\quad (x\ge Q_{t_0}).
\]
\end{enumerate}
\end{theorem}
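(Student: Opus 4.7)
My approach is to study the Dirichlet series $F(s) = \sum_{n \ge 1} f(n)/n^s = \prod_p (1-f(p)/p^s)^{-1}$ and its reciprocal $G(s) := 1/F(s) = \sum_n \mu(n) f(n)/n^s$, exploiting that $\mu f$ is the Dirichlet inverse of the completely multiplicative function $f$. The first step is to apply partial summation to \eqref{small-Q} to obtain quantitative bounds on $F$, $F'$, and a few higher derivatives on the closed half-plane $\Re(s)\ge 1$; the parameter $Q_t$ defined in \eqref{Qt} emerges naturally as the relevant scale in these estimates as $|\Im(s)|$ varies.

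For part (1), the key identity is $\Lambda_f = (f\log) * (\mu f)$, which follows from $-F'/F = (-F')\cdot G$ by equating Dirichlet coefficients. Since $\sum_{p\le x} f(p)\log p = \sum_{n\le x}\Lambda_f(n) + O(\sqrt{x})$, the problem reduces to estimating
\[
\sum_{n\le x} \Lambda_f(n) = \sum_{ab\le x} f(a)(\log a)\,\mu(b)f(b).
\]
I would apply Dirichlet's hyperbola method with a well-chosen split point: in one range, Hal\'asz's Theorem \ref{halasz} applied to $\mu f$ controls $\sum_{b\le y} \mu(b)f(b)$, noting the identity $\SD(\mu f,n^{it};y,x) = \SD(f,\mu(n)n^{it};y,x)$ so that the minimising $t$ in Hal\'asz coincides with the one in \eqref{N(x;T)}; in the other range, partial summation from \eqref{small-Q} bounds $\sum_{a\le y} f(a)\log a$ by $\ll y(\log Q)^{A-2}/(\log y)^{A-1}$. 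Optimising the split point together with the parameter $T$ yields the estimate, with the piecewise formula \eqref{B} for $B(A)$ corresponding to distinct optimal parameter regimes.

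For part (2), the vanishing $F(1+it_0) = L(1+it_0,f) = 0$ combined with the $F'$-bound from the first step gives $|F(1+\sigma+it_0)| \le \sigma\sup_{0\le u\le\sigma} |F'(1+u+it_0)|$, yielding an estimate of the form $|F(1+\sigma+it_0)|$ being small in terms of $\sigma$ and $\log Q_{t_0}$. Taking real parts of the logarithm of the Euler product,
\[
-\log|F(1+\sigma+it_0)| = -\sum_p \frac{\Re(f(p)p^{-it_0})}{p^{1+\sigma}} + O(1),
\]
and combining with $\sum_p p^{-1-\sigma} = \log(1/\sigma)+O(1)$ yields an upper bound on $\sum_p (1+\Re(f(p)p^{-it_0}))/p^{1+\sigma}$. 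A quantitative Tauberian step, exploiting the sharper joint control on $F$ and $F'$ available from the first step, converts this Dirichlet-series bound into the claimed short-average estimate for $\sum_{p\le x}(1+\Re(f(p)p^{-it_0}))\log p$. The square-root version then follows from $|1+z|^2 \le 2(1+\Re z)$ and the Cauchy-Schwarz inequality together with $\sum_{p \le x} \log p \ll x$, as indicated in the footnote.

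The main obstacle is the delicate optimisation in part (1) responsible for the piecewise exponent $B(A)$ in \eqref{B}. When $A$ is just above $2$, the decay from \eqref{small-Q} is barely integrable and one must extract cancellation from both factors very efficiently; when $A$ is large, cruder estimates suffice, but the optimal choice of split and Hal\'asz parameter $T$ changes qualitatively, producing the three regimes in the formula for $B$. Carrying out this optimisation uniformly while retaining the correct joint dependence on $x$, $Q$, and $T$ in the final estimate is the technically demanding part of the argument.
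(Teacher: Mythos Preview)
Your proposal has genuine gaps in both parts.

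\textbf{Part (a).} The plan ``hyperbola on $\Lambda_f=(f\log)*(\mu f)$, apply Hal\'asz to $\mu f$ on one side, partial summation from \eqref{small-Q} on the other, optimise the split'' cannot produce the stated result. First, Hal\'asz's theorem controls $\sum_{n\le y}\mu(n)f(n)$ in terms of $M_{\mu f}(y;T)$, whereas the conclusion is phrased in terms of $N(x;T)$, which by \eqref{N(x;T)} satisfies only $N(x;T)\ge M_{\mu f}(x;T)/2+O(1)$; a bound in $M_{\mu f}$ is genuinely weaker. Second, and more seriously, the piecewise exponent $B$ in \eqref{B} does not arise from balancing two terms in a hyperbola: in the paper it is obtained as $B'=\min\{k+\tfrac12-\tfrac{(k+1)(k+2)}{4(A-1)},\ \tfrac{(2k+1)(A-2)}{2k+A-1}\}$ with $k=\lfloor A-2\rfloor$, and the integer $k$ is the order of the derivative of $L'/L$ that is being estimated. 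No two--term split can see this dependence on $k$. Third, in your hyperbola the ``trivial'' side costs a full factor of $\log$ (e.g.\ $\sum_{b\le B}1/b$), which the paper avoids by sieving: one works with $f_z(n)=f(n)\mathbf 1_{P^-(n)>z}$ and the fundamental lemma of sieve methods, so that the inner sums have density $\asymp 1/\log z$ rather than $1$. The actual argument is: use Plancherel to write $I_k(\sigma;\Lambda f)^2=\tfrac1{2\pi}\int_{\SR}|(L'/L)^{(k)}(\sigma+it,f)|^2\,dt/(\sigma^2+t^2)$; bound $(L'/L)^{(k)}$ via the combinatorial Lemma~\ref{der-lemma}/\ref{der-bound} in terms of $L^{(j)}/L$ for $1\le j\le k+1$; control the denominator $|L_Q(\sigma+it,f)|$ on the line using Theorem~\ref{min-dist}, which says $|L_Q|$ can be small only on an interval of length $\asymp 1/\log Q$ around a single $t_0$; and then insert the resulting bound on $I_k$ into a sieved recursion for $S_0(x;\Lambda f)$. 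None of these ingredients is present in your outline.

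\textbf{Part (b).} Your route via $|F(1+\sigma+it_0)|\le \sigma\sup|F'|$ followed by a ``quantitative Tauberian step'' does not yield the exponent $A-2$. From $\log|F(1+\sigma+it_0)|=\sum_p\Re(f(p)p^{-it_0})/p^{1+\sigma}+O(1)$ and $|F|\ll\sigma\log Q_{t_0}$ you only get $\sum_p(1+\Re(f(p)p^{-it_0}))/p^{1+\sigma}\ll\log\log Q_{t_0}+O(1)$, a bounded Dirichlet sum; a standard Tauberian argument from this gives at best $o(x/\log x)$ for the partial sums, not the claimed $x(\log Q_{t_0}/\log x)^{A-2}$. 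The paper's proof is completely different and rests on \emph{positivity}: with $h(n)=f(n)n^{-it_0}$, the convolution $g=1*h*1*\overline h$ is nonnegative, so
\[
0\le \sum_{Q_{t_0}<p\le x}(1+\Re(h(p)))\le \tfrac12\sum_{\substack{n\le x\\P^-(n)>Q_{t_0}}}g(n).
\]
One then applies the hyperbola method to $g=(1*h)*(1*\overline h)$, using Lemma~\ref{Lbound-1}(b) and the hypothesis $L_{Q_{t_0}}(1,h)=0$ to get $\sum_{\substack{n\le u\\P^-(n)>Q_{t_0}}}(1*h)(n)\ll u(\log Q_{t_0})^{A-2}/(\log u)^{A-1}$; the remaining factor $\sum_{a\le\sqrt{x},\,P^-(a)>Q_{t_0}}|(1*h)(a)|/a$ is $\ll_\epsilon 1$ by Theorem~\ref{complex-dist} (which, since $L(1,h)=0$, gives $Q'=\infty$). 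This is where the exponent $A-2$ actually comes from, and the positivity/convolution idea is not something your Tauberian sketch can substitute for.
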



\begin{remark}\label{rk3}
Using Theorem \ref{complex} and a similar argument with the one leading to \eqref{small}, it is possible to show that, given $f:\SN\to\SU$ as in Theorem \ref{complex}, we have that
\[
\frac{1}{x} \sum_{n\le x} \mu(n) f(n)
	\ll_{A,\epsilon}(N(x;T) - \log\log Q)^2 \left(\frac{\log Q}{e^{N(x;T)}} \right)^{B}  +  \frac{1}{T} 
\]
with $B$ defined by \eqref{B}. If $A>6$ and $Q=O(1)$, this constitutes an improvement over the estimate that Hal\'asz's theorem yields for the partial sums of summatory function of the multiplicative function $\mu f$, since $N(x;T)\ge M_{\mu f}(x;T)/2 +O(1)$, by relation \eqref{N(x;T)}. In fact, as we will see in Section \ref{complex-proof}, we may replace $B$ with the larger quantity $B'$, defined by \eqref{B'}. This yields an improvement over Hal\'asz's theorem as soon as $A>(31+\sqrt{681})/10=5.70959\dots$
\end{remark}


\begin{remark}\label{rk1} It is possible to weaken the condition that $f$ is a completely multiplicative function and extend Theorem \ref{complex} to the class of multiplicative functions $f:\SN\to\SU$, but we need a stronger assumption on $f$ than\ \eqref{small} that excludes a certain type of behavior of $f$ on powers of 2. To see that this is necessary, set $f(n)=1$ when $n$ is odd and $f(n)=-1$ when $n$ is even. Then $f$ is multiplicative and $\sum_{n\le x}f(n)=O(1)$. However, the conclusion of Theorem\ \ref{complex}(a) is clearly false.

In order to avoid the above example, we impose the stronger condition that 
\eq{small-Q-odd}{
\left| \sum_{ \substack{  n\le x,\, 2\nmid n }  } f(n) \right| 
	\le \frac{x(\log Q)^{A-2}}{(\log x)^A}	\quad(x\ge Q),
} 
which is clearly satisfied if $f$ is completely multiplicative and\ \eqref{small-Q} holds (possibly with $Q^{O(1)}$ in place of $Q$). Under condition\ \eqref{small-Q-odd}, Theorem\ \ref{complex} remains true. Indeed, set $\tilde{f}(n)=f(n)$ if $(n,2)=1$ and $\tilde{f}(n)=0$ otherwise. Also, let $g(n)=\prod_{p^a\|n}\tilde{f}(p)^a$ and write $g=\tilde{f}*h$, so that $h$ is supported on odd square-full integers and satisfies the bound $|h(n)|\le 2^{\Omega(n)}$ for all $n$. Note that if $n$ is square-full and not divisible by 2, 3, then $n\ge 5^{\Omega(n)} $. So 
\als{
\sum_{n\le x}  |h(n)|
	& \le \sum_{ \substack{ n\le x,\ 2\nmid n \\ n\ \text{square-full} } } 2^{ \Omega(n) } 
		= \sum_{ \substack{ 3^\nu \le x \\ \nu\ge2 }}  2^{\nu}
		\sum_{ \substack{ m\le x/3^\nu,\, (m,6)=1 \\ m\ \text{square-full} } } 2^{ \Omega(m) } \\
	&\le \sum_{ \substack{ 3^\nu \le x \\ \nu\ge2 }}  2^{\nu}
		\sum_{ \substack{ m\le x/3^\nu,\, (m,6)=1 \\ m\ \text{square-full} } } m^{\frac{\log 2}{\log 5}} 
		\ll \sum_{ \nu\ge2} 2^{\nu} \cdot 
		\left( \frac{x}{3^\nu} \right)^{\frac{1}{2}+\frac{\log2}{\log 5} } \ll x^{\frac{1}{2}+\frac{\log2}{\log 5} } \le x^{0.95} .
}
The above estimate and\ \eqref{small-Q-odd} imply that $g$ satisfies \eqref{small-Q} (with $Q^{O(1)}$ in place of $Q$), which allows us to apply Theorem \ref{complex} to it. Since $g( p ) = f( p )$ for all primes $p>2$, the conclusion of Theorem 1.2 holds for the function $f$ too, as claimed.

Similar extensions can be made to all subsequent results.
\end{remark}


When $f$ is real valued, it is possible to exclude the possibility that $f$ looks like $\mu(n)n^{it}$ for some $t\neq0$ (see Theorem \ref{real-complex}(b) below) and simplify the statement of Theorem \ref{complex}.


\begin{cor}\label{real}
Let $\epsilon>0$ and $Q\ge 3$. Consider a completely multiplicative function $f:\SN\to[-1,1]$ that satisfies \eqref{small-Q} for some $A\ge2+\epsilon$. If $L(1,f)\neq0$, then for $x\ge Q' = \exp\{(\log Q)\prod_{p>Q}(1-f(p)/p ) \}$ we have that
\[
\frac{1}{x} \sum_{p\le x} f(p) \log p 
	\ll_{A,\epsilon}\left(\log \frac{2\log x}{\log Q'} \right)
		 \left(\frac{\log Q'}{ \log x} \right)^{B} 
\]
with $B$ is defined as in \eqref{B}. On the other hand, if $L(1,f)=0$, then for $x\ge Q$ we have that
\[
\frac{1}{x} \sum_{p\le x}(1+ f(p) ) \log p \ll_{A,\epsilon} \left(\frac{\log Q}{\log x} \right)^{A-2}.
\]
\end{cor}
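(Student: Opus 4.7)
The plan is to derive both assertions from Theorem \ref{complex}, treating the two cases separately and covering any small-$x$ ranges trivially.

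\textbf{Case $L(1,f)=0$.} I would apply Theorem \ref{complex}(b) with $t_0=0$. Since $f$ is real, $1+\Re(f(p)p^{-i\cdot 0})=1+f(p)$, and $Q_0=Q^2$. For $x\ge Q^2$, part (b) gives
\[
\frac1x\sum_{p\le x}(1+f(p))\log p \ll_{A,\epsilon}\left(\frac{\log Q^2}{\log x}\right)^{A-2}\ll_A\left(\frac{\log Q}{\log x}\right)^{A-2}.
\]
In the remaining range $Q\le x\le Q^2$, the trivial bound $0\le 1+f(p)\le 2$ combined with Chebyshev gives $\frac1x\sum_{p\le x}(1+f(p))\log p\ll 1$, which is consistent with the target since $(\log Q/\log x)^{A-2}\ge 2^{2-A}$ there.

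\textbf{Case $L(1,f)\neq 0$.} I would apply Theorem \ref{complex}(a) and compute $N(x;T)$ using the real-valuedness of $f$. Theorem \ref{real-complex}(b), once established, should let me restrict the minimization defining $N(x;T)$ to $t=0$: a real-valued $f$ with $L(1,f)\ne 0$ cannot pretend to be $\mu(n)n^{it}$ for any $t\ne 0$. At $t=0$ one has $Q_0=Q^2$, and Mertens' theorem gives
\[
\SD^2(f,\mu;Q^2,x)=\sum_{Q^2<p\le x}\frac{1+f(p)}p=\log\frac{\log x}{\log Q^2}+\sum_{Q^2<p\le x}\frac{f(p)}p+O(1).
\]
The last sum is identified via the Euler-product definition of $Q'$: taking logarithms yields
\[
\log\frac{\log Q'}{\log Q}=\sum_{p>Q}\log\left(1-\frac{f(p)}p\right)=-\sum_{p>Q}\frac{f(p)}p+O(1),
\]
so that $\sum_{Q^2<p\le x}f(p)/p=\log(\log Q/\log Q')+O(1)$ after absorbing the boundary contributions from $p\in(Q,Q^2]$ and $p>x$. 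Hence $N(x;T)=\log\log Q+\log(\log x/\log Q')+O(1)$. Choosing $T=(\log x/\log Q')^{2B}$ to kill the $1/T$ term, Theorem \ref{complex}(a) then delivers the stated bound in the range $x\ge\max(Q^2,Q')$; any remaining range $Q'\le x<Q^2$ is handled trivially as above.

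The main obstacle is the effective, quantitative reduction of the minimization in $N(x;T)$ to $t=0$ uniformly in $|t|\le T$, which hinges on Theorem \ref{real-complex}(b). A secondary point is checking that $\sum_p f(p)/p$ converges well enough (via partial summation applied to \eqref{small-Q}) to translate the Euler product defining $Q'$ into a controlled estimate for the truncated sum $\sum_{Q^2<p\le x}f(p)/p$. Finally, there is a mild discrepancy between the $(N-\log\log Q)^2$ prefactor produced by Theorem \ref{complex}(a) and the single $\log(2\log x/\log Q')$ appearing in the target; this should be absorbed either by a sharper choice of $T$ or by a specialization of the proof of Theorem \ref{complex}(a) to the real-valued setting.
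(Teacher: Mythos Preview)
Your proposal is correct and follows essentially the same route as the paper: both cases are deduced from Theorem~\ref{complex}, with Theorem~\ref{real-complex}(b) supplying the reduction to $t=0$ in the non-vanishing case. Two minor points of comparison: the paper simply takes $T=\infty$ (since the lower bound on $N(x;T)$ coming from Theorem~\ref{real-complex}(b) is uniform in $T$, the term $1/T$ disappears for free), which is cleaner than your finite choice $T=(\log x/\log Q')^{2B}$; and the identification of $\sum_{p>Q}f(p)/p$ with $\log\log Q-\log\log Q'$ is obtained in the paper via Lemma~\ref{dist-l1} applied to $L_Q(1,f)$ rather than by partial summation on \eqref{small-Q}. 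Your observation about the discrepancy between the $(N-\log\log Q)^2$ factor and the single logarithm in the stated bound is accurate; the paper does not address this explicitly, and one either accepts a squared logarithm or absorbs it into a slightly smaller exponent.
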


\begin{proof} For every $T\ge1$, Theorem \ref{real-complex} and Lemma \ref{dist-l1} below imply that 
\[
N(x; T) 
	\ge \log\log Q + \sum_{Q< p\le x} \frac{1+f(p)}{p} + O_\epsilon(1) 
	\ge \log\left( (\log x) \prod_{p>Q} \left( 1 - \frac{ f(p)} p \right)^{-1} \right)  + O_\epsilon(1) .
\]
So the first part of Corollary \ref{real} follows from Theorem \ref{complex}(a) applied with $T=\infty$. Finally, if $L(1,f)=0$, then the desired result is an immediate consequence of Theorem \ref{complex}(b).
\end{proof}


The dependence on $A$ can be made explicit in the above results. Keeping track of the implied constants leads to the following result, where we have assumed for simplicity that $f$ is real valued. A similar but weaker result holds in the general case of a complex valued function of modulus $\le1$.

\begin{theorem}\label{power} 
Let $\delta\in(0,1/3)$, $Q\ge e^{1/\delta}$ and $f:\SN\to[-1,1]$ be a completely multiplicative function such that
\eq{small-power}{
\left| \sum_{n\le x}f(n) \right|\le  \frac{ x^{1-\delta } }{ (\log x)^2 }   \quad(x\ge Q). 
}
\begin{enumerate}
\item If $L(1,f)\neq0$, then we have that
\[
\sum_{p\le x}f(p)\log p 
	\ll  \frac{x}{ e^{c\sqrt{\log x}} } +  x^{1-  c\eta /  \log Q  } \quad(x \ge Q),
\]
for some $c=c(\delta)$, where $\eta=\prod_{p>Q}(1 - f(p)/p)^{-1} \ll 1$. Moreover, there is a constant $c'\in(0,1)$ such that $L(s,f)$ has at most one zero in the interval $[1-c'/\log Q,1)$, say at $\beta$. If such a zero does not exist, we set $\beta=1-c'/\log Q$. In any case, we have that $\eta\asymp (1-\beta)\log Q$. 

\item If $L(1,f)=0$, then
\[
\sum_{p\le x}(1+f(p))\log p 
	\ll  x^{1-  1 / (61\log Q) }     \quad(x \ge Q).
\]
\end{enumerate}
\end{theorem}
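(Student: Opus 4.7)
The plan is to deduce Theorem~\ref{power} from the earlier results of the paper: Corollary~\ref{real}(a) for part~(a) and Theorem~\ref{complex}(b) for part~(b). The key maneuver is to choose the parameter $A$ in those results optimally in terms of $x$, $Q$ and $\delta$. A direct calculation shows that \eqref{small-power} implies \eqref{small-Q} with exponent $A$ whenever $(A-2)\log(\log x/\log Q) \le \delta\log x$, i.e.\ $A \le 2 + \delta\log x/\log(\log x/\log Q)$. The standing hypothesis $Q \ge e^{1/\delta}$ gives $1/\log Q \le \delta$, which will ensure that the values of $A$ considered below lie within this admissible range.

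For part~(b), I would apply Theorem~\ref{complex}(b) with $t_0=0$, so that $Q_{t_0}=Q^2$; its conclusion reads
\[
\sum_{p\le x}(1+f(p))\log p \ll_{A,\epsilon} x \left(\frac{2\log Q}{\log x}\right)^{A-2}.
\]
Setting $A - 2 = \log x/(61\log Q \cdot \log(\log x/(2\log Q)))$, one verifies admissibility via $1/\log Q \le \delta$, and substituting gives $x\exp(-\log x/(61\log Q)) = x^{1-1/(61\log Q)}$, as claimed. Degenerate ranges of $x$ with $\log x$ close to $\log Q$ can be handled directly from \eqref{small-power} together with a short partial summation.

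For part~(a), I would apply Corollary~\ref{real}(a), which with $\log Q' = \log Q/\eta$ and $B = 2A/3 - 2$ (valid for $A\ge 4$) yields a bound of the form
\[
\sum_{p\le x} f(p)\log p \ll_{A,\epsilon} x\,\log\!\left(\frac{2\eta\log x}{\log Q}\right) \left(\frac{\log Q}{\eta\log x}\right)^{B}.
\]
Each of the two terms in the claim comes from a different choice of $A$. To produce the $x/e^{c\sqrt{\log x}}$ term, I take $A$ so that $B \asymp \sqrt{\log x}/\log(\log x/\log Q)$; this lies inside the admissible range provided $\log x \gtrsim 1/\delta^2$, and the exponent $B\log(\log x/\log Q)$ then equals a constant times $\sqrt{\log x}$ up to the harmless $\eta$-factor inside the logarithm (we have $\eta\ll 1$). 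To produce the $x^{1-c\eta/\log Q}$ term, I take $A$ near the top of the admissible range, so that $B\log(\eta\log x/\log Q)$ is of order $\delta\eta\log x/\log Q$ after a short case analysis on whether $\eta\log x/\log Q$ exceeds a suitable threshold. Summing the bounds from the two choices of $A$ delivers the conclusion of part~(a).

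The remaining claim in part~(a) about the zero $\beta$ is a classical input. The key positivity is that for a real completely multiplicative $f\colon\SN\to[-1,1]$ we have
\[
\sum_{d\mid n}f(d) = \prod_{p^k\|n}\bigl(1+f(p)+\cdots+f(p)^k\bigr)\ge 0,
\]
so $\zeta(s)L(s,f)$ is a Dirichlet series with non-negative coefficients. Combining this with Landau's argument applied to $-L'(s,f)/L(s,f)$, and using the growth bounds on $L(s,f)$ that follow from \eqref{small-power}, one shows that $L(s,f)$ has at most one real zero in $[1-c'/\log Q,1)$. The relation $\eta \asymp (1-\beta)\log Q$ then follows by expanding $L$ near $s=\beta$, using the factorization $L(1,f) = \eta\prod_{p\le Q}(1-f(p)/p)^{-1}$, and verifying $\prod_{p\le Q}(1-f(p)/p)^{-1} \asymp \log Q$ by partial summation from \eqref{small-power} at $x=Q$. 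The main obstacle will be the two-regime optimization in part~(a) together with the explicit tracking of constants; the Siegel-type input is standard, but needs quantitative care.
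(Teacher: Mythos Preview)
Your approach has a fundamental gap: the implied constants in Theorem~\ref{complex} and Corollary~\ref{real} depend on $A$ (the notation is $\ll_{A,\epsilon}$), and you cannot let $A$ grow with $x$ without controlling how these constants blow up. Tracing through the proofs, the $A$-dependence involves factors such as $8^A$ in Lemma~\ref{Lbound-1}, $A^A$ in~\eqref{small-Qt}, and $c^k k!$-type growth coming from Lemmas~\ref{lambda} and~\ref{der-bound}; once $A$ is of order $\log x/\log\log x$ these overwhelm any gain from the exponent. There is also a more basic obstruction: the hypothesis~\eqref{small-Q} must hold for \emph{all} $y\ge Q$, not merely at your target $x$. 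Requiring $y^{1-\delta}/(\log y)^2 \le y(\log Q)^{A-2}/(\log y)^A$ for every $y\ge Q$ amounts to $(A-2)\log u\le \delta u\log Q$ for all $u\ge1$, which forces $A-2\le e\,\delta\log Q$ (the binding case is $u=e$, i.e.\ $y=Q^e$). This is a fixed quantity; your proposed choice $A-2\asymp \log x/(\log Q\cdot\log\log x)$ simply violates it for large $x$.

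The paper instead exploits the power-saving hypothesis~\eqref{small-power} directly. For part~(b) it applies Lemma~\ref{Lbound-1}(b) with $\sigma_0=1-\delta$, $A=2$, $M=1$: since $L_Q(1,f)=0$ the main term vanishes and the error is already $O(x^{1-1/(60\log Q)}/\log Q)$, after which partial summation gives the stated bound. For part~(a) the paper proves Proposition~\ref{power-prop}, an analogue of Proposition~\ref{complex-prop-3} in which the dependence on the differentiation order $k$ is tracked explicitly (as $k!\,c^k$), and then optimizes by taking $k\asymp\min\{\sqrt{\log x},\,L_Q(1,f)\log x/\log Q\}$; making this dependence explicit is exactly the work your black-box optimization over $A$ would have to redo. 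The Siegel-zero statement is Theorem~\ref{siegel}, proved separately in Section~\ref{l(1,f)}; note also that your claimed relation $\prod_{p\le Q}(1-f(p)/p)^{-1}\asymp\log Q$ is not true in general and is not how the paper obtains $\eta\asymp(1-\beta)\log Q$.
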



It is evident from the above result that our methods are of comparable strength with more classical arguments that use the analyticity of $L(s,f)$ to the left of the line $\Re(s)=1$ such as the ones in \cite{Dav}. Indeed, in \cite{K} it was shown how to combine the methods of this paper with estimates for exponential sums due to Korobov and Vinogradov to give a new proof of the best error term known in the prime number theorem for arithmetic progressions. However, since we always work with conditions of the form \eqref{small-Q}, we are forced to use different methods than analytic continuation and the residue theorem. So the proofs of Theorems \ref{complex} and \ref{power} above, as well as of the results in \cite{K}, are `elementary' from a broad point of view. We will give a brief outline of the main ideas that go into them in Subsection \ref{overview} below.  

\medskip

Finally, it would be desirable to extend the results of this paper to multiplicative functions that assume values outside the unit circle too. A large portion of the paper can be generalized to multiplicative functions whose values at primes are uniformly bounded. However, the results of Section \ref{distances}, which are of key importance, cannot be transferred immediately.


\subsection{Notation}\label{notation} For an integer $n$ we denote with $P^+(n)$ and $P^-(n)$ the greatest and smallest prime divisors of $n$, respectively, with the notational convention that $P^+(1)=1$ and $P^-(1)=\infty$. For two arithmetic functions $f,g:\SN\to\SC$ we write $f*g$ for their Dirichlet convolution, defined by $(f*g)(n)=\sum_{ab=n}f(a)g(b)$. Also, for $s\in\SC$  and $y\ge1$ we set 
\[
L(s,f) = \sum_{n=1}^\infty \frac{f(n)}{n^s}
	\quad\text{and}\quad 
L_y(s,f) = \sum_{P^-(n)>y}\frac{f(n)}{n^s},
\]
provided that the series converge. In the special case that $f(n)=1$ for all $n$, we use the notation 
\[
\zeta_y(s)=L_y(s,1).
\] 
We let $\tau_k(n)=\sum_{d_1\cdots d_k=n}1$ and we denote with $\mu(n)$ the M\"obius function, defined to be $(-1)^{\#\{p|n\}}$ if $n$ is squarefree and 0 otherwise. Moreover, we recall the definition of the generalized von Mangoldt functions $\Lambda_k=\mu*\log^k$, $k\in\SN\cup\{0\}$. The case $k=1$ corresponds to the standard von Mangoldt function, which we denote simply by $\Lambda$; its value at an integer $n$ is $\log p$ if $n$ is a prime power $p^a$ and 0 otherwise. Finally, the notation $F\ll_{a,b,\dots}G$ means that $|F|\le CG$, where $C$ is a constant that depends at most on the subscripts $a,b,\dots$, and $F\asymp_{a,b,\dots} G$ means that $F\ll_{a,b,\dots}G$ and $G\ll_{a,b,\dots}F$. In general, we reserve the letters $c$ and $C$ in order to denote constants, not necessarily the same ones in every place, and possibly depending on certain parameters that will be specified using subscripts and other means.


\subsection{Overview of the proof methodology}\label{overview} In this subsection we outline the main ideas that go into the proof of Theorem \ref{small} (and, hence, of Theorem \ref{power}). We start by discussing the proof of part (a). The main restriction we need to overcome is that condition \eqref{small-Q} is not strong enough to guarantee the analytic continuation of  $L(s,f)$ to the left of the line $\Re(s)=1$. This renders arguments based on the location of zeroes of $L(s,f)$ inapplicable. Instead, we employ an idea used by Iwaniec and Kowalski \cite[p. 40-42]{IK} to give a new proof of the prime number theorem, upon which we improve by combining it with some ideas from sieve methods. Our starting point is the combinatorial identity
\eq{comb-id}{
\left(\frac{-F'}{F}\right)^{(k-1)}(s)=k!\sum_{a_1+2a_2+\cdots=k}\frac{(-1+a_1+a_2+\cdots)!}{a_1!a_2!\cdots}
\left(\frac{-F'}{1!F}(s)\right)^{a_1}\left(\frac{-F''}{2!F}(s)\right)^{a_2}\cdots,
}
which translates upper bounds on the the derivatives of $(L'/L)(s,f)$ to upper bounds on the derivatives of $L(s,f)$ and lower bounds on $|L(s,f)|$. Our assumption that $f$ satisfies \eqref{small-Q} then allows us to bound $L^{(j)}(s,f)$ easily. On the other hand, lower bounds on $|L(s,f)|$ with $s=1+1/\log x+ it$ are equivalent to lower bounds on the distance function $\SD^2(f(n),\mu(n)n^{it};1,x)$. This explains the appearance of the quantity $N(x;T)$ in the statement of our results. Then we use an inversion formula, such as Perron's inversion formula, to insert the information that we have obtained on $(L'/L)^{(k-1)}(s,f)$ and estimate $\sum_{n\le x}\Lambda(n) f(n) (\log n)^{k-1}$. A crucial role when handling the integral of $(L'/L)^{(k-1)}(s,f)$ is played by the fact that $L(s,f)$ cannot be too small too often. This is the context of Theorem \ref{min-dist} below. Finally, removing the extra factor $(\log n)^{k-1}$ is easily accomplished by partial summation.

\medskip

The above simple description of the argument contains at least two inaccuracies. Firstly, if we apply \eqref{comb-id} with $F(z)=L(z,f)$, we are bound to lose some logarithmic factors. The reason is that the partial sums of $f(n)n^{-it}$ are small only past a barrier, which is roughly equal to $Q_t$ (see relation \eqref{small-Qt}). Therefore, it is possible that $f(n)n^{-it}\approx 1$ for $n\le Q_t$, which would force $L^{(j)}(1+1/\log x+it,f)$ to be abnormally large. However, if this is the case, then $L(1+1/\log x+it,f)$ would be also abnormally large. In order to take this phenomenon into account, we perform a preliminary sieve. So, instead, we apply \eqref{comb-id} with $F(z) = L_{Q_t}(z,f)$ and $s=1+1/\log x +it $. This ensures that we only consider integers $n>Q_t$. An additional advantage in considering $L_{Q_t}(z,f)$ (instead of $\sum_{n>Q_t}f(n)/n^z$, for example) is that $L_{Q_t}(z,f)$ possesses an Euler product. In particular, we can relate the size of $L_{Q_t}(1+1/\log x+it,f)$ to the distance function $\SD^2(f(n),\mu(n);Q_t,x)$. This explains why our results are stated using $N(x;T)$ instead of $M_{\mu f}(x;T)$.

The second inaccuracy in our initial description of the proof of Theorem \ref{complex}(a) concerns the way we translate bounds on the derivatives of $(L'/L)(s,f)$ to bounds on the partial sums of $f(p)\log p$. Instead of applying Perron's inversion formula, which would cause a loss of some logarithmic factors when $x \le Q^{C}$, we use the fact that mean values of multiplicative functions obey certain recursive relations, which allows us to smoothen out potential irregularities. Indeed, if $g$ is a completely multiplicative function, then we have that
\eq{d-delay-1}{
\sum_{n\le x} g(n) \log n = \sum_{dm\le x} \Lambda(d)g(d) g(m),
}
as a consequence of the identity $\log=\Lambda*1$. Relation \eqref{d-delay-1} plays a prominent role in the study of averages of multiplicative functions. In particular, it is featured in the proof of Theorem \ref{halasz}. Taking $g=f$ in \eqref{d-delay-1}, and applying relation \eqref{small-Q} and Dirichlet's hyperbola method, we deduce that
\[
\sum_{m\le \sqrt{x}} f(m) \sum_{d\le x/m} f(d)\Lambda(d) \ll_A \frac{ x (\log Q)^{A-2}}{ (\log x)^{A-1} } \quad (x\ge Q^2).
\]
The summand corresponding to $m=1$ is $\sum_{d\le x}f(d)\Lambda(d)$, that is to say, the sum we are trying to bound. So
\eq{d-delay-2}{
\sum_{d\le x} f(d) \Lambda(d) 
	= - \sum_{1<m\le \sqrt{x}} f(m) \sum_{d\le x/m} f(d)\Lambda(d) 
	+ O_A  \left( \frac{ x (\log Q)^{A-2}}{ (\log x)^{A-1} } \right)     \quad (x\ge Q^2).
}
However, relation \eqref{d-delay-2} is not very useful as it stands because the summand with $m=2$ on its right hand side equals $f(2)\sum_{d\le x/2} f(d)\Lambda(d)$, a quantity which is likely to have roughly the same size as the `main term' $\sum_{d\le x}f(d)\Lambda(d)$. In order to overcome this obstacle, we resort to sieve methods again. Instead of letting $g=f$ in \eqref{d-delay-1}, we fix some parameter $z\ge Q$ and we let $g(n)=f(n)$ when $P^-(n)>z$ and $g(n)=0$ otherwise. Then relation \eqref{d-delay-1} and the fundamental lemma of sieve methods (see Lemma \ref{fund-lemma}) yield that\footnote{See the proof of Theorem \ref{complex}(a) in Subsection \ref{complex-completion}.}
\eq{d-delay-3}{
\sum_{\substack{ m\le \sqrt{x} \\ P^-(m)>z }} f(m) \sum_{d\le x/m} f(d)\Lambda(d) 
	\ll_A x \cdot \left( \frac{\log z}{\log x}\right)^{A-1} .
}
The summand corresponding to $m=1$ in \eqref{d-delay-3} is $\sum_{d\le x}f(d)\Lambda(d)$, as before. However, all the summands on \eqref{d-delay-3} with $m\in(1,z]$ vanish, so the problem we had with relation \eqref{d-delay-2} does not exist anymore (and there is the additional advantage that $m$ runs over a subset of the integers in $(z,\sqrt{x}]$ that has density $1/\log z$ instead of all integers in $(z,\sqrt{x}]$). Finally, we make use of relation \eqref{d-delay-3} in a similar fashion as in the proof of Hal\'asz's theorem to establish Theorem \ref{complex}(a). However, in Hal\'asz's theorem the factorization $F'=(F'/F) \cdot F$ was key, and in our case such a factorization is not available. This leads to employing a different strategy, as we will see in the proof of Proposition \ref{complex-prop-3}.

\medskip

Finally, we discuss briefly the proof of part (b) of Theorem \ref{complex}, which is distinctly different from and simpler than the proof of part (a). The following argument was pointed out to us by an anonymous referee and by Andrew Granville. For simplicity, we assume that $t_0=0$. Our starting point is the observation that the convolution $1*f*1*\overline{f}$ assumes non-negative real values. Therefore
\[
0\le 2\sum_{p\le x} (1+\Re(f(p))) \le \sum_{n\le x} (1*f*1*\overline{f}) (n).
\]
In order to handle the sum on the right hand side, we use the fact that $\sum_{n\le x}(1*f)(n)$ is small, a consequence of our assumption that $L(1,f)=0$ and of relation \eqref{small-Q}. So Dirichlet's hyperbola method implies that $\sum_{p\le x}(1+\Re(f(p))) $ is small too.

As in the proof of part (a), in order to obtain the actual statement of Theorem \ref{complex}(b), we need to be more careful. We start instead from the formula
\[
0\le 2\sum_{Q< p\le x} (1+\Re(f(p))) \le \sum_{ \substack{ n\le x \\ P^-(n)>Q }} (1*f*1*\overline{f}) (n).
\]
A crucial role in the proof is also played by Theorem \ref{complex-dist} below.


\subsection{Outline of the paper} We give here a brief description of how the paper is structured. Firstly, in Section \ref{mainresults} we state some additional main results, Theorems \ref{complex-dist}, \ref{min-dist}, \ref{real-complex} and \ref{siegel}, which are a bit more technical than Theorems \ref{complex} and \ref{power}. Section \ref{zeta-section} contains a series of auxiliary estimates concerning the Riemann $\zeta$ function and its derivatives. Subsequently, in Section \ref{l(s,f)} we establish various bounds for partial sums of multiplicative functions and derive from them estimates for high derivatives of $L(s,f)$. In Section \ref{distances} we state and prove several results related to distance of a multiplicative function from the M\"obius function and apply them to control the size of $L(s,f)$ close to the line $\Re(s)=1$. We also establish Theorem \ref{real-complex}. The results of Section \ref{distances} are then used in Section \ref{dist-proofs} to show Theorems \ref{complex-dist} and \ref{min-dist}. In turn, these two results play a crucial in the proof of part (b) of Theorems \ref{complex} and \ref{power}, which is given in Section \ref{L(1+it_0)=0}. Next, in Section \ref{l(1,f)} we see how to control the size of $L(1,f)$ in terms of a potential \textit{Siegel zero} and demonstrate Theorem \ref{siegel}. Finally, the proof of part (a) of Theorems \ref{complex} and \ref{power} is split among two sections. In Section \ref{1/l(s,f)} we prove some required bounds on high derivatives of $1/L(s,f)$ and $(L'/L)(s,f)$. The main part of the proof of these theorems is then given in Section \ref{complex-proof}.


\section{Additional main results}\label{mainresults}

In this section we state various additional results that our methods yield and that we believe are of independent interest. These results will also play a key role in the proof of Theorems\ \ref{complex} and\ \ref{power}. The proof of the two theorems below will be given in Section \ref{dist-proofs}.


\begin{theorem}\label{complex-dist} Let $\epsilon>0$ and $Q\ge3$. Let $f:\SN\to\SU$ be a completely multiplicative function that satisfies \eqref{small-Q} with $A=2+\epsilon$. Then there is some $Q'\in[Q,+\infty]$ such that 
\[
\sum_{Q<p\le Q'} \frac{|1+f(p)|}{p} \ll_\epsilon1
	\quad\text{and}\quad
\sum_{Q'<p\le z}\frac{f(p)}p  \ll_\epsilon1	\quad(z\ge Q').
\]
So, for any $y\ge Q$, we have that $|L_y(1,f)| \asymp_\epsilon (\log y)/\log(yQ')$. In particular, letting $y=Q$, we find that $\log Q'\asymp_\epsilon   (\log Q)/|L_Q(1,f)|$.
\end{theorem}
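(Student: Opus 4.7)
The heart of the argument is that \eqref{small-Q} with $A = 2+\epsilon$ forces $f$ to pretend to be $\mu$ in a quantitative way up to a natural analytic conductor $Q'$, beyond which the tails of $\sum f(p)/p$ are already $O(1)$. I would \emph{define} $Q'$ a priori through
\[
|L_Q(1,f)| = \frac{c_\epsilon \log Q}{\log Q + \log Q'},
\]
setting $Q' = \infty$ when $L_Q(1,f) = 0$. The relation $\log Q'\asymp_\epsilon (\log Q)/|L_Q(1,f)|$ claimed in the theorem is then built into this definition, and what remains is the partial Euler-product asymptotic
\[
|L_y(1,f)|\asymp_\epsilon \frac{\log y}{\log(yQ')} \qquad (y\ge Q)
\]
together with the two cancellation statements.

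The analytic core is this partial-product asymptotic. For the upper bound on $|L_y(1,f)|$, I would combine \eqref{small-Q} with a sifted Perron-type contour that restricts the sum to $P^-(n) > y$ (in the spirit of Section \ref{l(s,f)}), using partial summation to bound $|L_y(1+\sigma,f)|$ uniformly as $\sigma\to 0^+$ and then passing to $\sigma = 0$ by continuity of the Dirichlet series. The matching lower bound I would extract from the pretentious-distance machinery of Section \ref{distances} applied to $\mu f$, using that $A > 2$ leaves quantitative slack in the Hal\'asz--Granville--Soundararajan comparison between $|L_y(1,f)|$ and $\exp(-\SD^2(f,\mu;y,\infty))$. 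Once the asymptotic is in place, taking logarithms and expanding $-\log(1-f(p)/p) = f(p)/p + O(1/p^2)$ gives, for $z \ge Q'$, $|L_z(1,f)|/|L_{Q'}(1,f)|\asymp_\epsilon 1$, hence $|\sum_{Q'<p\le z}f(p)/p|\ll_\epsilon 1$, which is property (ii); for $y \in (Q, Q']$ the same manipulation, after taking real parts and rearranging, yields the uniform estimate $\sum_{Q<p\le y}(1+\Re f(p))/p \ll_\epsilon 1$.

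The step I expect to be the main obstacle is upgrading this $\ell^2$-style control on $1+f$ to the $\ell^1$-style bound on $|1+f|$ demanded by (i). A bare Cauchy--Schwarz via $|1+f(p)|^2 \le 2(1+\Re f(p))$ pairs with the factor $\sum_{Q<p\le Q'}1/p\asymp \log(\log Q'/\log Q)$, which can be unbounded when $Q'$ is much larger than $Q$. The way around this is to exploit that the partial-product asymptotic holds \emph{uniformly} in $y$, not just at $y = Q$: breaking $(Q, Q']$ into suitable logarithmic windows on each of which the localized version of the asymptotic forces both $\sum(1+\Re f(p))/p$ and $\sum 1/p$ to be $O_\epsilon(1)$, Cauchy--Schwarz within each window is harmless, and a geometric summation across windows closes the estimate. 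Verifying that the lower bound in the partial-product asymptotic is sharp enough to afford this window-by-window localization is the technical crux I anticipate.
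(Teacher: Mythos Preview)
Your plan has two real gaps.

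\textbf{Circularity of the partial-product asymptotic.} You define $Q'$ through $|L_Q(1,f)|$ and then aim to prove $|L_y(1,f)|\asymp_\epsilon(\log y)/\log(yQ')$ for all $y\ge Q$. But by Lemma~\ref{dist-l1} this asymptotic is \emph{equivalent} to saying that $\sum_{Q<p\le y}(1+\Re f(p))/p=O_\epsilon(1)$ for $y\le Q'$ and $\sum_{Q'<p\le y}\Re f(p)/p=O_\epsilon(1)$ for $y\ge Q'$, which is essentially the conclusion (in its $\ell^2$ form). Your sketch offers no mechanism for the lower bound on $|L_y(1,f)|$: the machinery of Section~\ref{distances} relates $|L_y|$ to $\SD^2(f,\mu;y,\cdot)$, but bounding that distance from above \emph{is} what must be proved. (Also, from $|L_z|/|L_{Q'}|\asymp 1$ you recover only $\sum_{Q'<p\le z}\Re f(p)/p=O(1)$, not the full complex bound in (ii).) The paper reverses the logic: it first proves a weaker structural result (Theorem~\ref{complex-dist-2}) via a genuine dichotomy --- either $\SD^2(f,\mu;y,y^c)$ is never small and Lemma~\ref{dist-l3} applies directly, or it is small for some $y$, in which case the formula of Lemma~\ref{Lbound-1}(b) for $\sum_{P^-(n)>y}(1*f)(n)/n^\sigma$ is used to pin down $L_y(1,f)$ --- and only \emph{afterwards} reads off the asymptotic for $|L_y(1,f)|$ as a corollary.

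\textbf{The $\ell^1$ upgrade.} Even granting $\sum_{Q<p\le Q'}(1+\Re f(p))/p\ll_\epsilon 1$, your window-plus-Cauchy--Schwarz scheme does not close. On a window $(y,y^c]$ with $\sum 1/p\asymp 1$, Cauchy--Schwarz gives $\sum|1+f(p)|/p\ll c_k^{1/2}$ where $c_k=\sum(1+\Re f(p))/p$ on that window; summing over the $N\asymp\log((\log Q')/\log Q)$ windows yields at best $\sum_k c_k^{1/2}\le N^{1/2}(\sum_k c_k)^{1/2}\ll N^{1/2}$, which is unbounded. The partial-product asymptotic gives $c_k=O(1)$ uniformly but no geometric decay. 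The paper's upgrade works pointwise instead: using the nonnegativity of $1*f*1*\overline{f}$ and the hyperbola method together with Lemma~\ref{Lbound-1}(b) and Theorem~\ref{complex-dist-2}, it bounds $\sum_{Q<p\le x}(1+\Re f(p))$ (no reciprocal) by $\ll_\epsilon x/\log Q'+x(\log Q)^{\epsilon/2}/(\log x)^{1+\epsilon/2}$. After Cauchy--Schwarz this becomes $\sum_{p\le x}|1+f(p)|\ll x/((\log x)(\log Q'))^{1/2}+x(\log Q)^{\epsilon/4}/(\log x)^{1+\epsilon/4}$; partial summation of the first term over $[Q,Q']$ gives $(\log Q')^{1/2}/(\log Q')^{1/2}=O(1)$, and the second has integrable decay. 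That pointwise saving is exactly what your reciprocal-window approach lacks.
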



Here and for the rest of this paper we define 
\eq{Vt}{
V_t =  \exp\{ (\log(3+|t|))^{2/3} (\log\log(3+|t|))^{1/3} \} \quad(t\in\SR).
}


\begin{theorem}\label{min-dist} Let $f:\SN\to\SU$ be a completely multiplicative function, $\tau\ge1$, $Q\ge (V_{2\tau})^{100}$ and $\epsilon>0$ such that 
\[
\left| \sum_{n\le x}f(n)n^{-it} \right|
	\le \frac {x(\log Q)^\epsilon} { (\log x)^{2+\epsilon} } 
		\quad(x\ge Q,\,t\in[-\tau,\tau]).
\] 
Fix $\sigma>1$ and $J\subset[-\tau,\tau]$, and let $t_0\in J$ be such that $|L_Q(\sigma+it_0,f)|=\min_{t\in J}|L_Q(\sigma+it,f)|=:\eta$. Then, for any $t\in J$, we have that
\[
|L_Q(\sigma+it,f)|  \asymp_\epsilon 
	\begin{cases}	
		\eta & \text{if}\ |t-t_0| \le \eta/\log Q ,   \cr 
		|t-t_0|\log Q & \text{if}\ \eta/\log Q \le |t-t_0| \le 1/ \log Q ,  \cr  
		1 & \text{if}\ |t-t_0| \ge 1/\log Q  .
	\end{cases}
\]
\end{theorem}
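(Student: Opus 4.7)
The plan is to first reduce to the case $t_0=0$ via the substitution $g(n):=f(n)n^{-it_0}$ and $s:=t-t_0\in J-t_0\subseteq[-2\tau,2\tau]$; the hypothesis $Q\ge V_{2\tau}^{100}$ is imposed precisely so that the shifted function $g(n)n^{-is}=f(n)n^{-it}$ continues to satisfy the assumed partial-sum estimate uniformly in $s$ on this doubled range. I would then reduce to the regime $\sigma-1\le c/\log Q$, since otherwise a crude Euler-product bound yields $|L_Q(\sigma+is,g)|\asymp 1$ uniformly in $s$ and the conclusion is trivial; for such $\sigma$ a routine Mertens-type comparison gives $|L_Q(\sigma,h)|\asymp|L_Q(1,h)|$ for any completely multiplicative $h:\SN\to\SU$, so it suffices to work at $\sigma=1$.

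For each $t\in J$, I would apply Theorem \ref{complex-dist} to $h_t(n):=f(n)n^{-it}$ (valid since $t\in J\subseteq[-\tau,\tau]$), obtaining a pretentiousness cutoff $Q_t'\in[Q,+\infty]$ with $|L_Q(1,h_t)|\asymp_\epsilon\log Q/\log(QQ_t')$; in particular $\eta\asymp\log Q/\log(QQ_{t_0}')$. The theorem thus reduces to quantifying $Q_t'$ in terms of $Q_{t_0}'$ and $s=t-t_0$. Since $h_t=h_{t_0}n^{-is}$, a direct computation yields $\SD(h_t,\mu n^{-is};Q,y)=\SD(h_{t_0},\mu;Q,y)$, and the pretentious triangle inequality in both directions then supplies
\[
\bigl|\SD(h_t,\mu;Q,y)-\Delta_s^{(y)}\bigr|\;\le\;\SD(h_{t_0},\mu;Q,y),\qquad \Delta_s^{(y)}:=\SD(n^{is},1;Q,y).
\]
Combined with the uniform bound $\SD(h_{t_0},\mu;Q,y)=O_\epsilon(1)$ for $y\le Q_{t_0}'$ coming from Theorem \ref{complex-dist}, this identifies $Q_t'\asymp_\epsilon\min(Q_{t_0}',y^\ast)$, where $y^\ast$ is the largest $y$ for which $\Delta_s^{(y)}=O(1)$.

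The explicit evaluation $(\Delta_s^{(y)})^2=\sum_{Q<p\le y}(1-\cos(s\log p))/p$ (Taylor-expanding $\cos$ in the range $|s|\log p\lesssim 1$ and using classical cancellation $\sum_p\cos(s\log p)/p=O(1)$ once $|s|\log p\gtrsim 1$) shows that $\Delta_s^{(y)}=O(1)$ precisely when $\log y\ll 1/|s|$, so $y^\ast\asymp e^{1/|s|}$. The three regimes of the conclusion then correspond to the three possibilities $e^{1/|s|}\ge Q_{t_0}'$, $Q<e^{1/|s|}<Q_{t_0}'$, and $e^{1/|s|}\le Q$, which — via $|L_Q|\asymp\log Q/\log(QQ_t')$ — translate respectively into $|L_Q|\asymp\eta$, $|L_Q|\asymp|s|\log Q$, and $|L_Q|\asymp 1$. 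The transition thresholds $|s|\asymp\eta/\log Q$ and $|s|\asymp 1/\log Q$ stated in the theorem emerge from $\log Q_{t_0}'\asymp\log Q/\eta$ and from the trivial $\log Q=\log Q$. I expect the main technical obstacle to be the careful bookkeeping at the regime boundaries — in particular in the intermediate range where $|s|\log y\asymp 1$, so that both $\Delta_s^{(y)}$ and $\SD(h_{t_0},\mu;Q,y)$ must be tracked with enough precision for the implicit constants to match across cases.
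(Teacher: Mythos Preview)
Your approach is sound and reaches the goal, but it is organized differently from the paper's proof. The paper never tries to pin down $Q_t'$; instead it computes $\sum_{Q<p\le X}\Re(f(p)p^{-it})/p$ directly (with $X=e^{1/(\sigma-1)}$), splitting at $C_{t_0}=\min\{Q_{t_0}',X\}$ and at $y=\max\{Q,e^{1/|t-t_0|}\}$. On the range $(y,C_{t_0}]$ it uses the $L^1$-bound $\sum_{Q<p\le C_{t_0}'}|1+f(p)p^{-it_0}|/p\ll_\epsilon 1$ from Theorem~\ref{complex-dist} to replace $f(p)p^{-it}$ by $-p^{-i(t-t_0)}$ with total error $O(1)$, and then bounds $\sum\Re(p^{-i(t-t_0)})/p$ via Lemma~\ref{zeta}; for $p>C_{t_0}$ it invokes the second conclusion of Theorem~\ref{complex-dist} applied to $h_t$ itself, using the a~priori inequality $\log C_t\ll_\epsilon\log C_{t_0}$. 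Your route instead black-boxes $|L_Q(\sigma+it,f)|\asymp(\log Q)/\log Q_t'$, characterizes $\log Q_t'$ as the threshold where $\SD(h_t,\mu;Q,\cdot)$ ceases to be $O(1)$, and locates that threshold via the triangle inequality for $\SD$; this is more conceptual and avoids the case-by-case sum computations, at the cost of needing the (easy but unstated) lemma that properties (i) and (ii) of Theorem~\ref{complex-dist} together force $\log Q'$ to coincide, up to constants, with that threshold. Two small points to tighten: the reduction to $\sigma=1$ should be replaced by truncation at $X$ (as the paper does via $C_t=\min\{Q_t',X\}$), since $|L_Q(\sigma,h)|$ is governed by primes up to $X$ only; and your bound $\SD(h_{t_0},\mu;Q,y)=O_\epsilon(1)$ must be extended from $y\le Q_{t_0}'$ to $y\le(Q_{t_0}')^{O_\epsilon(1)}$ (immediate from conclusion (ii) of Theorem~\ref{complex-dist}) so that it covers $y=Q_t'$.
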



If we have additional information about $f$, then it is possible to control the distance of $f$ from $\mu(n)n^{it}$. This is the context of the following theorem, which will be proven in Section \ref{distances}.

\begin{theorem}\label{real-complex} Fix $\epsilon>0$ and $Q\ge3$. Let $f:\SN\to\SU$ be a completely multiplicative function that satisfies \eqref{small-Q} with $A=2+\epsilon$.
\begin{enumerate}
    \item If $f^2$ satisfies \eqref{small-Q} too with $A=2+\epsilon$, then 
    	\[
	\sum_{Q<p\le x}\frac{f(p)}p \ll_\epsilon 1  	\quad(x\ge Q).
    	\]
    \item Assume that $f$ is real valued, and let $t\in\SR$ and $x\ge Q\ge V_{2t}^{100}$. If $|t|\ge1/\log Q$, then 
    	\[
	\sum_{Q<p\le x} \frac{f(p)} { p^{1+it}} \ll_\epsilon 1, 
	\]
	and if $|t|\le1/\log Q$, then 
	\als{
	\SD^2(f(n),\mu(n)n^{it};Q,x)  
		&\ge   \SD^2(f(n),\mu(n);Q,x) -  O_\epsilon (1) \\
		&\ge \log\left(\frac{\log x}{\log Q}\right) +  \log L_Q(1,f) -  O_\epsilon(1).
	}
\end{enumerate}
\end{theorem}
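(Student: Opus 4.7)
My plan is to treat parts (a) and (b) separately, with Theorem~\ref{complex-dist} as the primary tool throughout.

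For part (a), I will apply Theorem~\ref{complex-dist} simultaneously to $f$ and to $f^2$, producing thresholds $Q_1$ and $Q_2$. The bound $|1-f^2(p)|\le 2|1+f(p)|$ converts the conclusion $\sum_{Q<p\le Q_1}|1+f(p)|/p\ll 1$ into $\sum_{Q<p\le Q_1}|1-f^2(p)|/p\ll 1$, so $f^2(p)\approx 1$ on $(Q,Q_1]$. Combined with $f^2(p)\approx -1$ on $(Q,Q_2]$ and the elementary inequality $|1-z|+|1+z|\ge 2$, this yields $\sum_{Q<p\le\min(Q_1,Q_2)}1/p\ll 1$, forcing $\log\min(Q_1,Q_2)\ll_\epsilon\log Q$. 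A parallel argument on $(Q_2,Q_1]$ when $Q_2<Q_1$, combining $f^2(p)\approx 1$ there with the bounded-partial-sum guarantee for $f^2(p)/p$ beyond $Q_2$, will give $\log Q_1\ll_\epsilon\log Q$. The trivial bound $|f|\le 1$ then handles the range $(Q,Q_1]$, while Theorem~\ref{complex-dist} directly handles $(Q_1,x]$.

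For part (b) in the sub-range $|t|\le 1/\log Q$, I split the claim as the inequalities $(A)\,\SD^2(f,\mu n^{it};Q,x)\ge \SD^2(f,\mu;Q,x)-O(1)$ and $(B)\,\SD^2(f,\mu;Q,x)\ge\log(\log x/\log Q)+\log L_Q(1,f)-O(1)$. I will establish $(B)$ by unfolding both sides to prime sums and reducing to $\sum_{p>x}f(p)/p\le O(1)$, which follows from Theorem~\ref{complex-dist} applied to $f$ in both cases $x\le Q'$ and $x>Q'$. For $(A)$, I must show $\sum_{Q<p\le x}f(p)[1-\cos(t\log p)]/p\le O(1)$, which I will split at $Q'$ and at $e^{1/|t|}$. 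On $(Q,Q']$ where $f(p)\approx -1$, the summand is approximately $-(1-\cos(t\log p))/p\le 0$ up to an error of size $O(\sum|1+f(p)|/p)=O(1)$; on the intermediate range $(Q',\min(x,e^{1/|t|})]$ the Taylor bound $1-\cos(t\log p)\ll (t\log p)^2$ combined with $\sum_{p\le e^{1/|t|}}(\log p)^2/p\ll 1/t^2$ gives $O(1)$; on $(e^{1/|t|},x]$, if nonempty, the identity $1-\cos(t\log p)=1-\tfrac{1}{2}(p^{it}+p^{-it})$ reduces the problem to the sub-range $|t|\ge 1/\log y$ below, applied with threshold $y=e^{1/|t|}$.

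For part (b) in the sub-range $|t|\ge 1/\log Q$, I will apply Theorem~\ref{complex-dist} to the completely multiplicative function $g(n)=f(n)n^{-it}$, which satisfies \eqref{small-Q} with some parameter $Q_t$ of size comparable to $Q$ by partial summation from the hypothesis (the constraint $Q\ge V_{2t}^{100}$ enters here). This produces a threshold $Q'_g$ with $\sum_{Q'_g<p\le z}f(p)/p^{1+it}\ll 1$ and $\sum_{Q_t<p\le Q'_g}|1+f(p)p^{-it}|/p\ll 1$. The identity $|1+f(p)p^{-it}|^2=(1+f(p))^2-4f(p)\sin^2(t\log p/2)$, valid for real $f(p)$, will give in the regime $f(p)\approx -1$ the bound $|1+f(p)p^{-it}|\gtrsim 2|\sin(t\log p/2)|$. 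Hence $\sum_{Q_t<p\le Q'_g}|\sin(t\log p/2)|/p\ll 1$, and Vinogradov--Korobov equidistribution of $t\log p$ modulo $2\pi$ over primes (valid under $|t|\log Q_t\gtrsim 1$ and $Q\ge V_{2t}^{100}$) will force $\log Q'_g\ll_\epsilon\log Q_t\asymp\log Q$. The sum over $(Q,Q'_g]$ is then trivially $\ll\sum 1/p\ll 1$. The hardest step will be this sub-range of part (b): verifying that $g=f\cdot n^{-it}$ satisfies a version of \eqref{small-Q} with an explicit value of $Q_t$, and making the Vinogradov-type equidistribution argument quantitative. Both rely essentially on the hypothesis $Q\ge V_{2t}^{100}$. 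A secondary difficulty is the reduction on the tail $p>e^{1/|t|}$ back to the large-$|t|$ analysis, which must be carried out with uniform constants so that no logarithmic losses accumulate in the final inequalities.
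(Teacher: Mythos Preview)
Your overall strategy of invoking Theorem~\ref{complex-dist} is logically permissible (there is no circularity: the proof of Theorem~\ref{complex-dist} does not use Theorem~\ref{real-complex}), and for part~(a) your approach works, though it is considerably heavier than the paper's. The paper handles~(a) in three lines via the triangle inequality (Lemma~\ref{triangle}): since $\SD(f,\mu)=\SD(\mu,\bar f)$, one has
\[
4\,\SD^2(f,\mu;y,x)\ge\SD^2(f,\bar f;y,x)=\SD^2(f^2,1;y,x)\ge\log\!\left(\frac{\log x}{\log y}\right)-O(1)
\]
by Lemma~\ref{dist-l2} applied to $f^2$, and then Lemma~\ref{dist1} converts this distance lower bound into the required bound on $\sum f(p)/p$.

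There is, however, a genuine gap in your treatment of part~(b) in the sub-range $|t|\ge 1/\log Q$. You propose to apply Theorem~\ref{complex-dist} to $g(n)=f(n)n^{-it}$, asserting that $g$ satisfies~\eqref{small-Q} with a parameter $Q_t$ ``of size comparable to $Q$''. This is false in general: partial summation from the hypothesis on $f$ gives $\sum_{n\le x}g(n)\ll (1+|t|)\,x(\log Q)^\epsilon/(\log x)^{2+\epsilon}$, so one needs $\log Q_t\asymp(1+|t|)^{1/\epsilon}\log Q$ (cf.~\eqref{Qt}). The hypothesis $Q\ge V_{2t}^{100}$ only forces $\log Q\gg(\log|t|)^{2/3+o(1)}$, permitting $|t|$ as large as $\exp\bigl((\log Q)^{3/2-o(1)}\bigr)$; then $\log Q_t/\log Q\gg(1+|t|)^{1/\epsilon}$ is enormous. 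Your equidistribution argument correctly yields $\log Q'_g\ll_\epsilon\log Q_t$, but the final ``trivial'' step $\sum_{Q<p\le Q'_g}1/p\ll 1$ then gives only $\ll\epsilon^{-1}\log(1+|t|)$, which is not $O_\epsilon(1)$.

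The paper avoids this entirely by exploiting the real-valuedness of $f$ through the triangle inequality rather than through Theorem~\ref{complex-dist}. Since $f=\bar f$, one has $\SD(f,\mu n^{it};y,x)=\SD(\mu n^{-it},f;y,x)$, so
\[
2\,\SD(f,\mu n^{it};y,x)\ \ge\ \SD(\mu n^{-it},\mu n^{it};y,x)\ =\ \SD(1,n^{2it};y,x).
\]
The hypothesis $Q\ge V_{2t}^{100}$ is precisely what makes Lemma~\ref{zeta} give $\zeta_y(1+1/\log x+2it)\ll 1$ for every $y\ge Q$, whence $\SD^2(1,n^{2it};y,x)\ge\log(\log x/\log y)-O(1)$; Lemma~\ref{dist1} then finishes. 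The crucial point is that this works uniformly for all $y\ge Q$, not merely $y\ge Q_t$ --- this is exactly the gain you are missing. Your reduction of the case $|t|\le 1/\log Q$ to the large-$|t|$ case is essentially the paper's, so once the large-$|t|$ argument is repaired along these lines the rest of your plan goes through.
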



Finally, if we have at our disposal very good estimates on the summatory function of $f$, then partial summation implies that $L(s,f)$ converges to the left of the line $\Re(s)=1$. Moreover, the size of $L(1,f)$ can be determined using information on the location of zeroes of $L(s,f)$ in a neighbourhood around 1. Remaining faithful to the `elementary' nature of the paper, in Section \ref{l(1,f)} we give a proof of this fact in the case when $f$ is real valued that avoids the use of complex analytic tools. In doing so, we make use of some ideas of Pintz \cite{Pi76I,Pi76II,Pi76III}, who gave elementary proofs of some related results when $f$ is a real Dirichlet character.


\begin{theorem}\label{siegel} Let $Q\ge3$ and $f:\SN\to[-1,1]$ be a completely multiplicative function satisfying \eqref{small-power} with $\delta=1/\log Q$. Then $L(s,f)$ converges in the half plane $\Re(s)>1-1/\log Q$ and there is an absolute constant $c\in(0,1/2)$ such that $L(s,f)$ has at most one zero in $[1-2c/\log Q,1]$, say at $\beta$. If no such zero exists, we set $\beta=1-2c/\log Q$. In any case, there are positive constants $c_1$ and $c_2$ such that for all $\sigma\in[1-c/\log Q,1+c\log Q]$ we have that 
\[
c_1(\sigma-\beta)\log Q  \le L_Q(\sigma,f)   \le c_2(\sigma-\beta)\log Q.
\]
\end{theorem}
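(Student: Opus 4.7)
The plan is to establish, in order: (i) convergence of $L(s,f)$ on $\Re(s)>1-1/\log Q$; (ii) the upper bound $L_Q(\sigma,f)\le c_2(\sigma-\beta)\log Q$; and (iii) the matching lower bound, from which uniqueness of $\beta$ also follows. Throughout I would work with the identity
\[
L_Q(s,f)\;=\;L(s,f)\prod_{p\le Q}\bigl(1-f(p)/p^s\bigr),
\]
noting that on the real interval of interest the finite Euler factor is non-vanishing and is bounded above and below by powers of $\log Q$ via Mertens' theorem, so that $L$ and $L_Q$ share the same zeros there.

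For (i), partial summation gives $L(s,f)=s\int_1^\infty S(u)u^{-s-1}\,du$ with $S(u)=\sum_{n\le u}f(n)$, and the hypothesis \eqref{small-power} with $\delta=1/\log Q$ makes this integral converge absolutely on $\Re(s)\ge 1-1/\log Q$ after splitting at $u=Q$ and using the trivial bound $|S(u)|\le u$ on $[1,Q]$. Differentiating under the integral sign and re-estimating in the same manner yields $|L^{(k)}(\sigma,f)|\ll_{k}(\log Q)^{k+1}$ for $\sigma$ in a neighbourhood of $1$ of width $O(1/\log Q)$; these derivative bounds will be used repeatedly below.

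For (ii), the key point is that $f$ is real-valued, so $L(\sigma,f)$ is real-analytic on the real axis of convergence, and the absolutely convergent Euler product at $\sigma>1$ shows $L(\sigma,f)>0$ there. Taking $\beta$ to be the largest zero of $L(\cdot,f)$ in $[1-2c/\log Q,1]$ (or $\beta:=1-2c/\log Q$ if none exists), Taylor's theorem around $\beta$ yields
\[
L(\sigma,f)\;=\;L'(\beta,f)(\sigma-\beta)\;+\;O\bigl((\sigma-\beta)^{2}(\log Q)^{3}\bigr),
\]
which, combined with the bound $|L'(\beta,f)|\ll\log Q$ from step (i) and the estimate on the finite Euler factor, gives the required upper bound on $L_Q(\sigma,f)$ throughout the stated $\sigma$-range.

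For (iii) I would adapt the positivity method of Pintz \cite{Pi76I,Pi76II,Pi76III}: the convolution $g:=1*f$ takes non-negative values on $\SN$, because the complete multiplicativity of $f$ together with $f(p)\in[-1,1]$ forces $g(p^k)=(1-f(p)^{k+1})/(1-f(p))\ge 0$ at every prime power. Applying Dirichlet's hyperbola identity to $\sum_{de\le x}f(e)$ at $y=\sqrt{x}$ and invoking \eqref{small-power} on the sums of $f$ at every scale produces the asymptotic
\[
G(x)\;:=\;\sum_{n\le x}g(n)\;=\;xL(1,f)\;+\;O\bigl(x^{1-\gamma}(\log Q)^{O(1)}\bigr)\qquad(x\ge Q^{C})
\]
for some $\gamma>0$, whence $G(x)\ge 0$ already forces $L(1,f)\ge 0$. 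To upgrade this qualitative positivity into the quantitative lower bound $L_Q(\sigma,f)\ge c_1(\sigma-\beta)\log Q$, I would apply the same hyperbola machinery to the weighted sums $\sum_{n\le x}g(n)/n^\sigma$ (equivalently, to the partial sums of the non-negative Dirichlet coefficients of $\zeta_Q(\sigma)L_Q(\sigma,f)$) and read off $L'(\beta,f)\asymp\log Q$ from the Taylor expansion in (ii). Uniqueness of $\beta$ in $[1-2c/\log Q,1]$ then drops out of the two-sided bound: a distinct second zero $\beta'$ would either lie in $[1-c/\log Q,1+c/\log Q]$, where $L_Q(\beta',f)\asymp(\beta'-\beta)\log Q\ne 0$ would contradict $L_Q(\beta',f)=0$, or in $[1-2c/\log Q,1-c/\log Q]$, which is ruled out by the same Taylor expansion on that slightly enlarged interval (this is where the factor $2$ buffer enters). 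The principal obstacle throughout is precisely this quantitative extraction of $L'(\beta,f)\asymp\log Q$ from the positivity of $G$ without any recourse to contour integration; the $(\log u)^{-2}$ factor in \eqref{small-power} is essential to keep the hyperbola error terms subdominant to the leading $xL(1,f)$ term.
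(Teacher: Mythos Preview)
Your overall architecture---positivity of $g=1*f$, a hyperbola identity, and Taylor expansion around $\beta$---matches the paper's, but two of your three steps have genuine gaps.

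\medskip

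\textbf{On (ii):} You oscillate between $L$ and $L_Q$ in a way that breaks the scaling. The finite Euler factor $\prod_{p\le Q}(1-f(p)/p^\sigma)$ is \emph{not} bounded above and below by absolute constants: depending on the signs of $f(p)$ for $p\le Q$ it can be $\asymp 1/\log Q$ or $\asymp\log Q$. Consequently your claimed bound $|L'(\beta,f)|\ll\log Q$ need not hold (take $f(p)=1$ for all $p\le Q$); what is true, via Lemma~\ref{Lbound-2}(b), is $|L_Q^{(j)}(\sigma,f)|\ll_j(\log Q)^j$. The paper avoids this entirely by running the whole argument with $L_Q$, never passing through $L$. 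Also, your Taylor expansion presupposes $L(\beta,f)=0$, which is false in the no-zero case where $\beta$ is set artificially; that case needs a separate argument.

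\medskip

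\textbf{On (iii):} This is the real crux, and ``read off $L'(\beta,f)\asymp\log Q$ from the Taylor expansion'' is not a proof. The paper's mechanism is quite specific. It first proves the implication
\[
L_Q(1-\eta,f)\ge0\ \Longrightarrow\ L_Q(1,f)\gg\eta\log Q
\]
by applying the sifted hyperbola formula (Lemma~\ref{Lbound-1}(b)) to $\sum_{n\le e^{C/\eta},\,P^-(n)>y}(1*f)(n)/n^{1-\eta}$ with $y=e^{1/(M\eta)}$, using that the sum is $\ge(1*f)(1)=1$, and invoking Theorem~\ref{complex-dist} to compare $L_y(1,f)$ with $L_Q(1,f)$. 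This implication handles the no-zero case: continuity forces $L_Q(1-\sqrt{c}/\log Q,f)>0$, hence $L_Q(1,f)\gg\sqrt{c}$, and then $|L_Q'|\ll\log Q$ keeps $L_Q(\sigma,f)\asymp 1$ throughout the interval. In the zero case, the paper applies Lemma~\ref{Lbound-1}(b) again at $s=1$, $y=Q$, $x=Q^{1/c^{1/4}}$ to get
\[
1\le\sum_{\substack{n\le x\\P^-(n)>Q}}\frac{(1*f)(n)}{n}=\bigl\{(\log x+\gamma_{1,Q})L_Q(1,f)+L_Q'(1,f)\bigr\}\prod_{p\le Q}\Bigl(1-\frac1p\Bigr)+O(c^{1/4}),
\]
and since $L_Q(1,f)=\int_\beta^1 L_Q'\ll(1-\beta)\log Q\ll\sqrt{c}$ is negligible, positivity forces $L_Q'(1,f)\gg\log Q$. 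The factor $\prod_{p\le Q}(1-1/p)\asymp 1/\log Q$ is precisely what produces the $\log Q$. Your unsifted version $G(x)=xL(1,f)+O(\cdots)$ does not isolate $L_Q'$ and cannot yield the right power of $\log Q$ without this sifting step.
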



\section{Bounds for $\zeta$ and its derivatives}\label{zeta-section}

In this section we give some estimates about the Riemann $\zeta$ function. First, we have the following lemma, which is a simple corollary of Lemma 2.4 in \cite{K}.


\begin{lemma}\label{zeta} 
Let $y\ge2$ and $t\in\SR$ with $y\ge V_t^{100}$. For $x\ge y$, we have that 
\[
\sum_{\substack{n\le x \\ P^-(n)>y } }n^{it}
	= \frac{x^{1+it}}{1+it}  \prod_{p\le y}\left(1-\frac1p\right)
	+  O \left(   \frac{x^{1- 1/ (30\log y) } }  { \log y } \right) .
\]
Consequently, if $s=\sigma+it$ is such that $\sigma\ge1-1/(60\log y)$, then
\[
\sum_{\substack{n\le x\\P^-(n)>y}}\frac1{n^s}=\left(\frac{1-x^{-s+1}}{s-1}+\gamma_{s,y}\right)\prod_{p\le y}\left(1-\frac1p\right)+O\left( x^{1-\sigma- 1/(30\log y)} \right),
\]
where $\gamma_{s,y}$ is a constant that depends only on $s$ and $y$, it is real valued for $s\in\SR$, and it satisfies the uniform bound $\gamma_{s,y}\ll\log y$ for $s$ and $y$ as above.
\end{lemma}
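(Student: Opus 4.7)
The first estimate is, as advertised, a direct corollary of Lemma~2.4 of \cite{K}: the Mertens-type factor $\prod_{p\le y}(1-1/p)$ is exactly the density of integers with no prime factor $\le y$, and the hypothesis $y\ge V_t^{100}$ is precisely what is needed to absorb the $n^{it}$ twist into an error term with power saving in $x$.

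For the second estimate I would apply Abel summation to the first. Set $B(u):=\sum_{n\le u,\,P^-(n)>y} n^{it}$, so that $B(u)=1$ on $[1,y)$ and $B(u)=\frac{u^{1+it}}{1+it}\prod_{p\le y}(1-1/p)+R(u)$ with $|R(u)|\ll u^{1-1/(30\log y)}/\log y$ for $u\ge y$. Writing $n^{-s}=n^{it}\cdot n^{-s-it}$, partial summation yields
\als{
\sum_{\substack{n\le x\\P^-(n)>y}}\frac{1}{n^s}
	&= B(x)\,x^{-s-it} + (s+it)\int_1^x B(u)\,u^{-s-it-1}\,du.
}
Splitting the integral at $u=y$, evaluating the main-term piece of $B(u)$ on $[y,x]$ exactly, and using $B(u)\equiv 1$ on $[1,y)$, a short algebraic rearrangement gathers all principal contributions into $\bigl(\frac{1-x^{1-s}}{s-1}+\gamma_{s,y}\bigr)\prod_{p\le y}(1-1/p)$, where
\als{
\gamma_{s,y}&=\frac{(s+it)y^{1-s}-(1+it)}{(1+it)(s-1)}+\frac{1-y^{-s-it}}{\prod_{p\le y}(1-1/p)}.
}

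I would then verify $\gamma_{s,y}\ll\log y$ by splitting into the cases $|s-1|\le 1/\log y$ and $|s-1|\ge 1/\log y$: in the former, a Taylor expansion of $(1+it+w)e^{-w\log y}-(1+it)$ about $w=s-1=0$ (using $|w|\log y\le 1$) shows the first summand is $O(\log y)$, while in the latter the trivial bounds $y^{1-\sigma}\le e^{1/60}$ and $|s+it|/(|1+it|\,|s-1|)\ll 1+\log y$ suffice. The second summand is $O(\log y)$ by Mertens, and for real $s$ (so $t=0$) the whole expression is visibly real. The main obstacle is the $R$-contribution $R(x)x^{-s-it}+(s+it)\int_y^x R(u)u^{-s-it-1}\,du$: here one exploits $\sigma+1/(30\log y)-1\ge 1/(60\log y)>0$ to keep the tail integral absolutely convergent, which, combined with the extra $1/\log y$ saving built into the pointwise bound on $R$, delivers the claimed error $O(x^{1-\sigma-1/(30\log y)})$.
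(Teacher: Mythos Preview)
Your approach via Abel summation is the same as the paper's, but there is a genuine gap in the treatment of the $R$-contribution. You assert that
\[
R(x)\,x^{-s-it}+(s+it)\int_y^x R(u)\,u^{-s-it-1}\,du \ = \ O\bigl(x^{1-\sigma-1/(30\log y)}\bigr).
\]
This cannot hold: since $\sigma+1/(30\log y)-1\ge 1/(60\log y)>0$, the integral $(s+it)\int_y^x R(u)u^{-s-it-1}\,du$ converges as $x\to\infty$ to a finite constant that is generically \emph{nonzero}, while the asserted bound tends to $0$. The fix is to split $\int_y^x=\int_y^\infty-\int_x^\infty$, absorb the convergent piece $(s+it)\int_y^\infty R(u)u^{-s-it-1}\,du$ into $\gamma_{s,y}$, and keep only $-(s+it)\int_x^\infty R(u)u^{-s-it-1}\,du$ as part of the error. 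This is exactly what the paper does; its $\gamma_{s,y}$ explicitly contains the term $\sigma\int_y^\infty R(u,y,t)u^{-\sigma-1}\,du$. Your closed formula for $\gamma_{s,y}$ is therefore incomplete, and your verification that $\gamma_{s,y}\ll\log y$ and that $\gamma_{s,y}$ is real for real $s$ would need to be redone for the corrected constant.

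A secondary point: writing $n^{-s}=n^{it}\cdot n^{-s-it}$ puts the factor $|s+it|=|\sigma+2it|$ in front of the Abel integral, and after the above fix the tail bound becomes $\ll |s+it|\,x^{1-\sigma-1/(30\log y)}$, which is not uniform in $t$ (the hypothesis $y\ge V_t^{100}$ allows $|t|$ far larger than any power of $\log y$). The paper avoids this by summing against $n^{-it}$ and weighting by $u^{-\sigma}$, so that only the real factor $\sigma$ appears; since $\sigma/(\sigma-1+1/(30\log y))\ll\log y$ uniformly for $\sigma\ge 1-1/(60\log y)$, the extra $1/\log y$ in $|R|$ exactly cancels it and the stated error follows with an absolute implied constant.
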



\begin{proof} The first part of the lemma is a special case of Lemma 2.4 in \cite{K}. For the second part, note that
\[
\sum_{\substack{n\le x\\P^-(n)>y}}\frac1{n^s}
	= 1+ \int_y^x \frac{1}{u^\sigma} d\left( \sum_{\substack{n\le x\\P^-(n)>y}} n^{-it} \right) .
\]
So if we write
\[
\sum_{\substack{n\le x \\ P^-(n)>y } }n^{-it}
	= \frac{x^{1-it}}{1-it}  \prod_{p\le y}\left(1-\frac1p\right) + R(x,y,t),
\]
then the second part follows with
\[
\gamma_{s,y}
 	= \left( 1 - \frac{R(y,y,t)}{y^\sigma} +   \sigma \int_y^\infty \frac{R(u,y,t)}{u^{\sigma+1}} du \right) 
	\prod_{p\le y}\left( 1- \frac1p \right)^{-1}  - \int_1^y\frac{du}{u^s} .
\]
\end{proof}


In order to prove our next result on $\zeta$, we need a lemma due to Montgomery \cite[Theorem 3, p. 131]{M94}:

\begin{lemma}\label{mont} 
Let $A(s)=\sum_{n\ge1}a_n/n^s$ and $B(s)=\sum_{n\ge 1}b_n/n^s$ be two Dirichlet series which converge for $\Re(s)>1$. If $|a_n|\le b_n$ for all $n\in\SN$, then 
\[
\int_{-T}^T  |A(\sigma+it)|^2  dt  \le  3  \int_{-T}^T  |B(\sigma+it)|^2  dt    \quad(\sigma>1,\,T\ge0).
\]
\end{lemma}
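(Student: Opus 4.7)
The proof is by a Fourier--analytic majorant argument. Since $\sigma>1$, both $A$ and $B$ converge absolutely, so Fubini gives the bilinear-form identity
\[
\int_{-T}^T |A(\sigma+it)|^2\,dt = \sum_{m,n\ge 1} \frac{a_m\overline{a_n}}{(mn)^{\sigma}}\,\lambda_{mn}(T),
\]
where $\lambda_{mn}(T) = \int_{-T}^T (n/m)^{it}\,dt$ equals $2T$ on the diagonal $m=n$ and $2\sin(T\log(n/m))/\log(n/m)$ otherwise; the analogous formula with $b_m b_n$ in place of $a_m\overline{a_n}$ holds for $B$. Since the off-diagonal kernel $\lambda_{mn}(T)$ oscillates in sign, the bound $|a_m\overline{a_n}|\le b_m b_n$ cannot be inserted directly.

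To bypass this, I would replace $\mathbf 1_{[-T,T]}$ by a non-negative majorant $M_T\ge\mathbf 1_{[-T,T]}$ whose Fourier transform $\widetilde{M_T}(u):=\int M_T(t)e^{itu}\,dt$ is itself non-negative and compactly supported. A scaled Fej\'er kernel such as
\[
M_T(t)=\frac{\pi^2}{4}\left(\frac{\sin(\pi t/(2T))}{\pi t/(2T)}\right)^{2}
\]
has the required properties: one checks $M_T\ge 1$ on $[-T,T]$ (with equality at $\pm T$ and value $\pi^2/4$ at $t=0$), while $\widetilde{M_T}(u)=(\pi^2 T/2)\max(0,1-T|u|/\pi)$ is non-negative and supported in $[-\pi/T,\pi/T]$. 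By Plancherel,
\[
\int_{-T}^T|A(\sigma+it)|^2\,dt\;\le\;\int_{\mathbb R}M_T(t)|A(\sigma+it)|^2\,dt\;=\;\sum_{m,n}\frac{a_m\overline{a_n}}{(mn)^{\sigma}}\widetilde{M_T}(\log(n/m)),
\]
and since $\widetilde{M_T}\ge 0$, the estimate $|a_m\overline{a_n}|\le b_m b_n$ can now be applied termwise to bound this in turn by $\int_{\mathbb R}M_T(t)|B(\sigma+it)|^2\,dt$.

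The main obstacle is then to establish
\[
\int_{\mathbb R}M_T(t)|B(\sigma+it)|^2\,dt\;\le\; 3\int_{-T}^T|B(\sigma+it)|^2\,dt.
\]
This does not hold pointwise, since $M_T$ carries non-trivial mass outside $[-T,T]$; instead one exploits the non-negativity of the coefficients $b_n$. Expanding both integrals again as bilinear forms in $b_m b_n$ reduces the inequality to a comparison of the two weights $\widetilde{M_T}(\log(n/m))$ and $3\lambda_{mn}(T)$, summed against the non-negative double sequence $b_m b_n/(mn)^{\sigma}$. The diagonal contribution is immediate from $\widetilde{M_T}(0)=\pi^2 T/2\le 3\cdot 2T$, while the off-diagonal contribution is handled by a Hilbert-type inequality of Montgomery--Vaughan: the oscillation of $\sin(T\log(n/m))/\log(n/m)$, when averaged against $b_m b_n\ge 0$, cannot undercut the average of the non-negative triangular function $\widetilde{M_T}$ by more than the factor $3$ dictated by the calibration of $M_T$. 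This final comparison is the delicate step, and it is where the constant $3$ enters.
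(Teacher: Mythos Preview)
The paper does not prove this lemma; it simply cites Montgomery \cite[Theorem~3, p.~131]{M94}. So there is no in-paper proof to compare against, and the question is whether your sketch stands on its own.

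Your steps 1--3 are exactly right and are Montgomery's idea: choose a majorant $M_T\ge\mathbf 1_{[-T,T]}$ with non-negative Fourier transform, expand $\int M_T|A|^2$ as a bilinear form, and use $|a_m\overline{a_n}|\le b_mb_n$ together with $\widetilde{M_T}\ge 0$ to pass from $A$ to $B$. The specific $M_T$ you wrote down has the claimed properties.

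Step 4, however, is a genuine gap. You want
\[
\sum_{m,n}\frac{b_mb_n}{(mn)^\sigma}\,\widetilde{M_T}\!\bigl(\log\tfrac nm\bigr)
\;\le\;3\sum_{m,n}\frac{b_mb_n}{(mn)^\sigma}\,\lambda_{mn}(T),
\]
and you propose to get it by comparing the two kernels ``in aggregate'' via a Hilbert-type inequality. This does not work as stated. Writing $x=T\log(n/m)$, the termwise inequality $\widetilde{M_T}(x/T)\le 3\lambda_{mn}(T)$ becomes $\tfrac{\pi^2T}{2}\max(0,1-x/\pi)\le \tfrac{6\sin x}{x/T}$; for $x=3\pi/2$ the left side is $0$ while the right side is negative, so the termwise bound fails, and there is no Montgomery--Vaughan-type estimate that repairs this with the sharp constant $3$. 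The diagonal slack you noted ($\pi^2T/2$ versus $6T$) is not by itself enough to absorb the off-diagonal defect.

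The standard remedy --- and this is how Montgomery's argument actually closes --- is to introduce a \emph{minorant} as well: pick $m_T\le\mathbf 1_{[-T,T]}$ whose Fourier transform $\widehat{m_T}$ is also non-negative (e.g.\ the Fej\'er triangle $m_T(t)=\max(0,1-|t|/T)$), and then compare Fourier transforms pointwise, $\widetilde{M_T}(u)\le C\,\widehat{m_T}(u)$ for all $u$. Since $b_mb_n\ge 0$, this yields
\[
\int_{\mathbb R}M_T|B|^2\;\le\;C\int_{\mathbb R}m_T|B|^2\;\le\;C\int_{-T}^{T}|B|^2,
\]
which is exactly what you need. With the simplest Fej\'er pair $M_T=2\max(0,1-|t|/(2T))$ and $m_T=\max(0,1-|t|/T)$ one finds $\widetilde{M_T}/\widehat{m_T}=2\cos^2(Tu/2)\le 2$ but the majorant costs a further factor $2$, giving $C=4$; obtaining $C=3$ requires a slightly sharper choice of kernels (as in Montgomery's book), not a Hilbert inequality. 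Replace your step~4 with this minorant comparison and the argument is complete.
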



Using the two lemmas above, we shall prove the following estimate which concerns averages of $\Lambda_k=\mu*\log^k$ and, consequently, provides estimates for $\zeta^{(k)}/\zeta$.

\begin{lemma}\label{lambda} Let $x,z\ge3$, $k\in\SN\cup\{0\}$, $m\in\SN$, $r\in\SN\cup\{0\}$ and $T\ge2$. There exists a constant $c>0$ such that
\[
\sum_{P^+(n)\le z}\frac{\Lambda_k(n)\tau_m(n)(\log n)^r}{n^{1+1/\log x}}
	\ll_m m^k c^{k+r}(k+r)!\min\{\log z,\log x\}^{k+r}
\]
and 
\als{
&\int_{-T}^T\left| \sum_{P^+(n)\le z}\frac{\Lambda_k(n)\tau_m(n)(\log n)^r}{n^{1+ 1/\log x +it}}\right|^2dt \\
	&\quad \ll_m m^{2k} c^{k+r} (k+r)!^2  \left\{ T(\log V_T)^{2k+2r} 
		+  \min\{\log z,\log x\}^{2k+2r-1}  \right\} .
}
\end{lemma}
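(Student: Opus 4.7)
My plan rests on a refined pointwise inequality for $\Lambda_k$: for $n=p_1^{b_1}\cdots p_j^{b_j}$ with $j\le k$ distinct primes,
\[
\Lambda_k(n)\le\binom{k}{j}\,j!\,2^{k-j}\prod_{i=1}^j\log p_i\cdot(\log n)^{k-j},
\]
together with the support condition $\Lambda_k(n)=0$ for $\omega(n)>k$ and non-negativity $\Lambda_k\ge0$ (both following by induction from Selberg's recursion $\Lambda_k=\Lambda\log^{k-1}+\Lambda_{k-1}*\Lambda$). The displayed bound is obtained by writing $\Lambda_k(n)=\sum_{S\subseteq\{1,\ldots,j\}}(-1)^{|S|}(\log n-\sum_{i\in S}\log p_i)^k$ and controlling the alternating sum through the integral representation of a $j$-th iterated finite difference: $\bigl|\sum_S(-1)^{|S|}V_S^l\bigr|\le\tfrac{l!}{(l-j)!}\,v_1\cdots v_j\,V^{l-j}$, with $v_i=\log p_i$ and $V=\sum_iv_i$.

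For the first, pointwise estimate, I split the sum by $j=\omega(n)$, expand $(\log n)^{k-j+r}=(\sum_ib_i\log p_i)^{k-j+r}$ multinomially, use sub-multiplicativity $\tau_m(n)\le\prod_i\tau_m(p_i^{b_i})$ to drop the distinctness of the primes, and factor the result into a product of $j$ single-prime sums
\[
\widetilde B_e=\sum_{p\le z,\,b\ge1}\frac{\tau_m(p^b)(\log p)^{e+1}b^e}{p^{b(1+1/\log x)}}\ll m\,\min\{\log z,\log x\}^{e+1}\,e!,
\]
the latter bound coming from Mertens's theorem (for the $b=1$ main term) and a geometric-series estimate for $b\ge2$. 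The extra factor $(\log p)^{e+1}$ arising from the refined $\Lambda_k$-bound is critical: the $e=0$ case then contributes $mL$ rather than $m\log L$, which is what lets the final bound end up as a clean power of $L=\min\{\log z,\log x\}$. Plugging back, using $\binom{k-j+r}{e_1,\ldots,e_j}\prod e_i!=(k-j+r)!$ to evaluate the composition sum, and applying the binomial theorem to the $j$-sum (whose evaluation is bounded by a constant times $m^k$ for every $m\ge1$), gives the pointwise bound.

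For the $L^2$ bound I apply the same pointwise analysis, but to each single-prime Dirichlet sum $\widetilde B_e(\sigma+it)=\sum_{p\le z,\,b\ge1}\tau_m(p^b)(\log p)^{e+1}b^e/p^{b(\sigma+it)}$ separately, and split $\int_{-T}^T$ at $|t|=1/\log z$. On the short range $|t|\le1/\log z$, the trivial pointwise bound $\widetilde B_e(\sigma+it)\ll m\min\{\log z,\log x\}^{e+1}e!$ combined with an interval of length $\asymp1/\log z$ produces the $\min\{\log z,\log x\}^{2k+2r-1}$ term. On the long range $|t|>1/\log z$, I use a Vinogradov--Korobov-type bound $\widetilde B_e(\sigma+it)\ll m(\log V_t)^{e+1}e!$ (obtained via partial summation from the prime number theorem with classical error term, or directly through Lemma~\ref{zeta}) and propagate the saving through the combinatorial decomposition from the first part to obtain a pointwise bound on the full series $|A(\sigma+it)|$ with $\min\{\log z,\log x\}$ replaced by $\log V_t$; the $t$-integration then produces the $T(\log V_T)^{2k+2r}$ term. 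The principal obstacle is routing the $\log V_t$-saving through the multinomial expansion while keeping the factor $m^{2k}$ intact: a crude majorization of the Dirichlet series by $(\zeta_z^m)^{(k+r)}(s)$ followed by an appeal to Lemma~\ref{mont} would give the $\log V_t$-improvement cleanly but, through the $m$-fold Leibniz expansion, would inflate the exponent of $m$ from $k$ to $k+r$; accordingly, the $j$-indexed decomposition from the first part must be preserved throughout.
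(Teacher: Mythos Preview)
Your pointwise argument for the first bound is a genuinely different route from the paper's: the paper proves the case $z=\infty$, $m=1$ by induction on $k$ via the identity $\zeta^{(K+1)}=(\zeta\cdot\zeta'/\zeta)^{(K)}$, then reduces the general case by noting that on the squarefree support of $\Lambda_k$ one has $\tau_m(n)\le m^k$ and $n\le z^k$, so that $n^{-1/\log x}\le e^k n^{-1/\log\min\{x,z\}}$; finally it handles prime powers by the squarefree/squarefull factorisation $n=ab$. Your finite-difference bound on $\Lambda_k$ and the multinomial factorisation into single-prime sums $\widetilde B_e$ is more direct and avoids the induction. One point to watch is the $b\ge2$ contribution to $\widetilde B_e$: your ``geometric-series estimate'' has to produce a bound $\ll_m C^e e!\,L^{e+1}$ uniformly in $e$, which requires tracking the $p^{-2\sigma}$ decay carefully; this is exactly the issue the paper's squarefree/squarefull split is designed to isolate.

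The $L^2$ argument, however, has a genuine gap. Your decomposition in the first part rests on the \emph{inequality} $\Lambda_k(n)\le\binom{k}{j}j!\,2^{k-j}\prod_i\log p_i\cdot(\log n)^{k-j}$ and on dropping the distinctness of the primes---both are upper bounds on nonnegative quantities. Consequently the decomposition only controls $\sum_n|a_n|n^{-\sigma}$, not the oscillatory sum $\sum_n a_n n^{-\sigma-it}$; you cannot ``propagate the $\log V_t$-saving through the combinatorial decomposition'' because there is no identity to propagate it through. To exhibit cancellation pointwise in $t$ you would need an exact factorisation of $A(s)$, and the distinctness constraint on the primes obstructs this (inclusion--exclusion over coincidences introduces a cascade of lower-$j$ terms).

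Your stated reason for avoiding Lemma~\ref{mont}---that majorising by $(\zeta_z^m)^{(k+r)}$ inflates $m^k$ to $m^{k+r}$---is based on the wrong majorant. The paper instead uses the support/size observation $\mu^2(n)\Lambda_k(n)\tau_m(n)\le (em)^k\Lambda_k(n)$ (with the shifted abscissa $1+1/\log\min\{x,z\}$) to pull out the $m^k$ \emph{before} invoking Montgomery's lemma, and then compares to $(\zeta^{(k)}/\zeta)^{(r)}$ rather than $(\zeta^m)^{(k+r)}$. With that majorant, Lemma~\ref{mont} gives the $L^2$ bound directly from the pointwise inequality~\eqref{lambda-e1}, with the correct $m$-exponent. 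This is the missing ingredient in your $L^2$ argument.
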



\begin{proof} First, we show the lemma in the special case when $z=\infty$ and $m=1$. Then we show how to deduce the general case from this special one. We claim that, for any $s=\sigma+it$ with $\sigma>1$ and $t\in\SR$, we have that
\eq{lambda-e1}{
\left| \left( \frac{ \zeta^{(k)} } { \zeta}\right)^{(r)} (s) \right| 
	\le c_1^{k+r}k!r! \left( \log V_t + \frac1{|s-1|}\right)^{k+r},
}
for some absolute constant $c_1$. Observe that this estimate immediately implies both parts of the lemma in the special case when $z=\infty$ and $m=1$. So it remains to show \eqref{lambda-e1}. Lemma 4.3 in \cite{K} implies that 
\eq{lambda-e2}{
\left| \left( \frac{\zeta'}{\zeta} \right)^{(r)}(s) \right|
	\le c_2^{r+1}r!\left(\log V_t+\frac1{|s-1|}\right)^{r+1} 	\quad(r\in\SN\cup\{0\})
}
for some constant $c_2$. We will show \eqref{lambda-e1} with $c_1=4c_2$ by inducting on $k$. When $k=0$, \eqref{lambda-e1} holds trivially, whereas when $k=1$, it follows by \eqref{lambda-e2}. Assume now that \eqref{lambda-e1} holds for all $k\in\{0,1,\dots,K\}$ and all $r\in\SN\cup\{0\}$, where $K$ is some positive integer. Writing $\zeta^{(K+1)}=(\frac{\zeta'}{\zeta}\cdot \zeta)^{(K)}$, we find that 
\[
\frac{\zeta^{(K+1)}}{\zeta}(s)
	=  \sum_{j_1=0}^K  \binom {K}{j_1}   \frac{\zeta^{(K-j_1)}}{\zeta}(s)	
			\left(  \frac{\zeta'}{\zeta}  \right)^{(j_1)}(s)
\] 
and, consequently, 
\[
\left(\frac{\zeta^{(K+1)}}{\zeta}\right)^{(r)}(s)
	= \sum_{\substack{0\le j_1\le K \\ 0\le j_2 \le r } }  \binom K{j_1}\binom r{j_2}
		\left( \frac{ \zeta^{(K-j_1)} }{\zeta} \right)^{ (r - j_2 ) }(s)  \left( \frac{\zeta'}{\zeta} \right)^{( j_1 + j_2 ) }(s).
\]
By the induction hypothesis and relation \eqref{lambda-e2}, we deduce that 
\als{
\left| \left( \frac{\zeta^{(K+1)}}{\zeta} \right)^{(r)}(s) \right|
	&\le K!r! \left( \log V_t + \frac1{|s-1|} \right)^{K+r+1}
		\sum_{\substack{0\le j_1\le K\\0\le j_2\le r}}
			c_1^{K+r-j_1-j_2} c_2^{j_1+j_2+1}  \binom{j_1+j_2}{j_1} \\
	&\le K!r! \left( \log V_t + \frac1{|s-1|} \right)^{K+r+1}
		\sum_{\substack{0\le j_1\le K\\0\le j_2\le r}}
			c_1^{K+r-j_1-j_2} \left(\frac{c_1}{4} \right)^{j_1+j_2+1}  2^{j_1+j_2}\\
	&< c_1^{K+r+1} K! r!  \left( \log V_t +  \frac1{|s-1|}  \right)^{K+r+1},
}
since $c_2=c_1/4$. This completes the proof of \eqref{lambda-e1} and hence of the lemma when $z=\infty$ and $m=1$.

Finally, we show how to deduce the general case of the lemma from the case when $z=\infty$ and $m=1$. First, we prove that the same result holds with $\mu^2\Lambda_k$ in place of $\Lambda_k$. Then we deduce the lemma from this weaker statement. Indeed, if $P^+(n)\le z$ and $\mu^2(n)\Lambda_k(n)\neq0$, then $n\le z^{k}$ and $\tau_m(n)\le m^k$, since $n$ is square-free and it has at most $k$ distinct prime factors\footnote{It is well-known that $\Lambda_k$ is supported on integers with at most $k$ distinct prime factors, something which can be seen using the recursive formula $\Lambda_{k+1}=\Lambda_k\log+\Lambda*\Lambda_k$.}. 
Therefore 
\[
\frac{\tau_m(n) }{ n^{1+1/\log x} } \le \frac{ m^k } {n^{1+1/\log x}} \le \frac{ (em)^{k} }{  n^{1+1/\log(\min\{x,z\})}  }
\] 
for all such $n$. Consequently, we have that
\eq{mulambda-e1}{
\sum_{P^+(n)\le z}\frac{\mu^2(n) \Lambda_k(n)\tau_m(n)(\log n)^r}{n^{1+ 1/\log x} }
	&\le (em)^k \sum_{n=1}^\infty \frac{\Lambda_k(n)(\log n)^r}{n^{1+1/\log(\min\{x,z\})}  } \\
	&\le	m^k (ec_3)^{k+r}(k+r)!\min\{\log z,\log x\}^{k+r} ,
}
by the case with $z=\infty$ and $m=1$ we already proved. Similarly, Lemma \ref{mont} implies that
\al{
\int_{-T}^T\left| \sum_{P^+(n)\le z}\frac{\mu^2(n) \Lambda_k(n)\tau_m(n)(\log n)^r}{n^{1+ 1/\log x +it}}\right|^2dt
	&\le 3 \cdot (em)^{2k}  \int_{-T}^T   \left| \sum_{n=1}^\infty 
		\frac{\Lambda_k(n)  (\log n)^r}{n^{1+1/\log(\min\{x,z\}) +it}} \right|^2 dt \nn
	&\le m^{2k} c_4^{k+r} (k+r)!^2 \left\{ T(\log V_T)^{2k+2r}  \right. \label{mulambda-e2}   \\
	&\quad + \left.  \min\{\log z,\log x\}^{2k+2r-1}  \right\} . \nonumber	
}
The above estimates prove our claim that the lemma holds with $\mu^2\Lambda_k$ in place of $\Lambda_k$. 

We are now ready to complete the proof of the lemma. Every integer $n$ can be written uniquely as $n=ab$, where $a$ is square-free, $b$ is square-full and $(a,b)=1$. Moreover, we have that
\[
\Lambda_k(ab)	= \sum_{j=0}^k \binom kj \Lambda_{k-j}(a) \Lambda_j(b)
\]
(see, for example, \cite[p. 16]{IK}) and
\[
\log^r(ab) = \sum_{j=0}^r \binom{r}{j} (\log a)^{r-j} (\log b)^j ,
\]
Consequently,
\al{
\sum_{P^+(n)\le z} \frac{\Lambda_k(n) \tau_m(n) (\log n)^r} {n^{1+ 1/\log x +it}}
	&= \sum_{\substack{ P^+(ab)\le z,\, (a,b)=1 \\ b\ \text{square-full}} }
		\frac{\mu^2(a) \Lambda_k(ab)\tau_m(ab) \log^r(ab) }{ (ab)^{1+ 1/\log x +it } } \nn
	&= \sum_{\substack{0\le j_1\le k\\0\le j_2\le r}} \binom k{j_1} \binom r{j_2} 
		\sum_{P^+(a)\le z} \frac{\mu^2(a) \Lambda_{j_1}(a) \tau_m(a) (\log a)^{j_2}} 
			{a^{1+ 1/\log x +it}} \cdot C_{j_1,j_2}(a),   \label{lambda-e3}
}
where 
\[
C_{j_1,j_2}(a) = \sum_{\substack{P^+(b)\le z, \, (b,a)=1 \\ b\ \text{square-full} } } 
	\frac{\Lambda_{k-j_1}(b)\tau_m(b)(\log b)^{r-j_2}}{b^{1+ 1/\log x +it}}.
\]
Since 
\[
\sum_{\substack{ b\le x \\ b\ \text{square-full} } } \tau_m(b)
	\le \sum_{g^2h^3\le x} \tau_m(g^2h^3) 
	\ll_m\sqrt{x}(\log x)^{m(m+1)/2} \ll_m x^{2/3}
\]
and $\Lambda_{k-j_1}(b)\le(\log b)^{k-j_1}$, by the positivity of $\Lambda_{k-j_1}$ and the fact that $\log^{k-j_1} = 1* \Lambda_{k-j_1}$, partial summation implies that 
\eq{lambda-e4}{
|C_{j_1,j_2} (a)| \le \sum_{b\ \text{square-full}} \frac{\tau_m(b) (\log b)^{k+r-j_1-j_2}} b 
	\ll_m c_5^{k+r-j_1-j_2}(k+r-j_1-j_2)! .
}
So relation \eqref{mulambda-e1} with $j_1$ and $j_2$ in place of $k$ and $r$, respectively, and relation \eqref{lambda-e3} imply that
\als{
\sum_{P^+(n)\le z}\frac{\Lambda_k(n)\tau_m(n)(\log n)^r}{n^{1+ 1/\log x }}
	&\ll_m \sum_{\substack{0\le j_1\le k\\0\le j_2\le r}} \binom k{j_1} \binom r{j_2} 
			m^{j_1} (ec_3)^{j_1+j_2}(j_1+j_2)!  \\
	&\quad \times \min\{\log z,\log x\}^{j_1+j_2} 
	 	c_5^{k+r-j_1-j_2} (k+r-j_1-j_2)! \\ 
	&\le m^k c_6^{k+r} (k+r)!  \min\{\log z,\log x\}^{k+r} ,
}
since $(g+h)!\le 2^{g+h} g!h!$. This proves the first part of the lemma. For the second part, relation \eqref{mulambda-e2} and Lemma \ref{mont} yield that
\als{
\int_{-T}^T&\left| \sum_{P^+(n)\le z}
	\frac{ \mu^2(a) \Lambda_{j_1}(a)\tau_m(a)(\log a)^{j_2} C_{j_1,j_2}(a) }{a^{1+ 1/\log x  +it}}\right|^2dt \\
	&\ll_m c_5^{2(k+r-j_1-j_2)} (k+r-j_1-j_2)!^2 
		\int_{-T}^T   \left| \sum_{n=1}^\infty 
		\frac{  \mu^2(a) \Lambda_{j_1}(a)\tau_m(a)(\log a)^{j_2} }{a^{1+ 1/\log x  + it}} \right|^2 dt \\
	&\le c_5^{2(k+r-j_1-j_2)} (k+r-j_1-j_2)!^2  m^{2j_1} c_4^{j_1+j_2} (j_1+j_2)!^2 \\
	&\qquad\qquad \times \left\{ T(\log V_T)^{2j_1+2j_2}  +  \min\{\log z,\log x\}^{2j_1+2j_2-1}  \right\} .
}
Together with \eqref{lambda-e3} and the triangle inequality for the $L^2$-norm, the above estimate yields the second and last part of the lemma.
\end{proof}


\section{Bounds for the derivatives of $L(s,f)$}\label{l(s,f)}

In this section we estimate the partial sums of $f$ over integers with no small primes factors and deduce bounds for the derivatives of $L_y(s,f)$. In order to do so, we appeal to the so-called {\it fundamental lemma of sieve methods}. This result has appeared in the literature in many different forms (for example, see \cite[Theorem 7.2]{HR}). The version we shall use is a direct consequence of Lemma 5 in \cite{fi}.


\begin{lemma}\label{fund-lemma} Let $y\ge2$ and $D=y^u$ with $u\ge2$. There exist two arithmetic functions $\lambda^\pm:\SN\to[-1,1]$, supported in $\{d\in\SN:P^+(d)\le y,\,d\le D\}$, for which 
\[
\begin{cases}
	(\lambda^-*1)(n) =  (\lambda^+*1)(n)=1 		&\text{if}~P^-(n)>y,\cr 
	(\lambda^-*1)(n) \le 0 \le (\lambda^+*1)(n)		&\text{else}.
\end{cases}
\]
Moreover, if $g:\SN\to[0,1]$ is a multiplicative function and $\lambda\in\{\lambda^+,\lambda^-\}$, then 
\[
\sum_d \frac{ \lambda(d)g(d) }{d} =  (1+O(e^{-u}))  \prod_{p\le y} \left( 1 - \frac{ g(p) }{p} \right).
\]
\end{lemma}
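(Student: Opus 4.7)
The plan is to establish Lemma \ref{fund-lemma} by constructing $\lambda^\pm$ as combinatorial sieve weights, following Lemma~5 of \cite{fi}. As a first approximation, Brun's pure sieve --- defined by $\lambda^+(d) = \mu(d)\cdot \mathbf{1}[\omega(d)\le 2r]$ and $\lambda^-(d) = \mu(d)\cdot \mathbf{1}[\omega(d)\le 2r-1]$ for a positive integer $r$, both supported on squarefree divisors of $P(y) := \prod_{p\le y} p$ --- already captures the correct shape: the sign inequalities follow by parity, and the tail of the truncation contributes at most $(e\sigma/(2r))^{2r}$, where $\sigma = \sum_{p\le y} g(p)/p \ll \log\log y$. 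Choosing $r = \lfloor u/2\rfloor$ makes the support condition $d\le D = y^u$ automatic. However, this simple construction only delivers the error $O(e^{-u})$ in the regime $u \gg \log\log y$; uniformly in $u \ge 2$, one requires the more delicate Rosser--Iwaniec beta-sieve construction, which is what Lemma~5 of \cite{fi} provides.

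Concretely, the beta-sieve weights $\lambda^\pm$ are obtained by truncating $\mu$ not simply by $\omega(d)$, but by an alternating buffer condition on the decreasing prime factorisation $d = p_1 p_2 \cdots p_k$ (with $p_1 > p_2 > \cdots > p_k$): $\mu(d)$ is retained precisely when, for each index $j$ of the appropriate parity, the product $p_1 p_2 \cdots p_j \cdot p_j^\beta$ remains $\le D$, where $\beta > 1$ is a fixed absolute constant. The pointwise bound $|\lambda^\pm| \le 1$ and the support condition $d \le D$ are built into the construction. The sign inequalities
\als{
(\lambda^- * 1)(n) \le \mathbf{1}[P^-(n) > y] \le (\lambda^+ * 1)(n)
}
follow from a telescoping argument grounded in the beta-sieve identity; the key observation is that the buffer alternation produces the same sign pattern $(-1)^k\binom{\omega(n)-1}{k}$ as in the Brun case but now uniformly over the cut-off regime. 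The multiplicative estimate
\als{
\sum_d \frac{\lambda^\pm(d)g(d)}{d} = \bigl(1 + O(e^{-u})\bigr) \prod_{p\le y}\left(1 - \frac{g(p)}{p}\right)
}
is then obtained by comparing the beta-sieve sum to the full one $\sum_{d\mid P(y)} \mu(d)g(d)/d = \prod_{p\le y}(1-g(p)/p)$ and bounding the buffer contribution by a Rankin-type argument that crucially exploits the cancellation within each buffer zone.

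The main obstacle is the alternating-buffer construction itself: one must verify that the buffer conditions interact correctly with the signs of $\mu(d)$ to deliver simultaneously the pointwise sign inequalities and the sharp \emph{relative} error $O(e^{-u})$, uniformly in $u \ge 2$ and over all $g:\SN\to[0,1]$. Brun's pure sieve falls short here because its bound is absolute and so inadequate when $\prod_{p\le y}(1-g(p)/p)$ is as small as $1/\log y$. This verification is the combinatorial heart of Lemma~5 in \cite{fi} (and of the Rosser--Iwaniec beta-sieve more generally); once it is granted, Lemma \ref{fund-lemma} is a matter of matching notation.
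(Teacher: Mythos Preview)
Your proposal is correct and aligns with the paper's treatment: the paper does not give a proof of this lemma at all, but simply records it as a direct consequence of Lemma~5 in \cite{fi}, exactly the source you invoke. Your additional exposition on Brun's pure sieve versus the Rosser--Iwaniec beta-sieve is accurate and helpful context, but goes beyond what the paper provides.
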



Using Lemma\ \ref{fund-lemma}, we estimate the partial sums of $f$ over integers with no small primes factors. Note that the second formula in part (b) of the following lemma is similar to \cite[Lemma 4]{Pi76III}.

\begin{lemma}\label{Lbound-1} Let $f:\SN\to\SU$ be a completely multiplicative function such that 
\[
\left| \sum_{n\le x}f(n) \right| 
	\le \frac{ Mx^{\sigma_0} }{ (\log x)^A }	\quad(x\ge Q)
\] 
for some $\sigma_0\in[3/5,1]$, $Q\ge3$, $M\ge1$ and $A\ge0$. Consider $x\ge Q^4$, $y\in[10,\sqrt{x} \,]$ and $t\in\SR$.
\begin{enumerate}

\item  Let $\sigma_1=(1+\sigma_0)/2$. Then
\[
\sum_{\substack{n\le x\\P^-(n)>y}}\frac{f(n)}{n^{it}}
	\ll \frac{4^A(|t|+1)(\log y)Mx^{\sigma_1}}{(\log x)^A}+\frac{x^{1 - 1/(2\log y) }} {\log y}.
\]

\item Let $\sigma_2=(1+\sigma_1)/2=(3+\sigma_0)/4$. If $A>1$ and $y\ge V_t^{100}$, then
\[
\sum_{ \substack{ n\le x\\P^-(n)>y } } \frac{(1*f)(n)}{n^{it}}
	= \frac{x^{1-it}}{1-it} L_y(1,f)  \prod_{p\le y} \left(1-\frac1p\right) 
		+  O \left(\frac{A}{A-1} \frac{8^A(|t|+1)Mx^{\sigma_2}}{(\log x)^{A-1}} 
		+ \frac{x^{1- 1/(60\log y)} } {\log y}\right).
\]
Moreover, if $A>2$, then for $s=\sigma+it$ with $\sigma\ge\max\{\sigma_2,1-1/(120\log y)\}$ we have that\als{
\sum_{\substack{ n\le x \\ P^-(n)>y } } \frac{(1*f)(n)}{n^s}
	&=\left\{ \left( \frac1{s-1} +  \gamma_{s,y}  \right)  L_y(s,f) 
		- \frac{ x^{-s+1} }{s-1}  L_y(1,f)\right\}  \prod_{p\le y}\left(1-\frac1p\right)   \\
	& \quad +  O \left(\frac{A-1}{A-2}\frac{8^A(|t|+1)M}{(\log x)^{A-2}}+x^{- 1/(120\log y) }\right),
}
where $\gamma_{s,y}$ is defined in Lemma \ref{zeta}.
\end{enumerate}
\end{lemma}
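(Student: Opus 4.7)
The plan combines the fundamental lemma of sieve methods with Abel summation, the latter fueled by the hypothesis on the partial sums of $f$. For part (a), first remove the sifting condition $P^-(n)>y$ using the upper-bound sieve weights $\lambda^+$ of Lemma \ref{fund-lemma} at level $D=y^u$ with $u=(\log x)/(2\log y)$, so that $D=\sqrt x$. The nonnegative discrepancy $(\lambda^+*1)(n)-\mathbf{1}_{P^-(n)>y}(n)$ has total mass $\ll x^{1-1/(2\log y)}/\log y$ over $n\le x$, producing the second error term. The main contribution $\sum_{n\le x}(f(n)/n^{it})(\lambda^+*1)(n)$ factors, by complete multiplicativity of $f$, as a weighted double sum over $d\le D$ and $m\le x/d$. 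Abel summation applied to the hypothesis bounds the inner $m$-sum by $\ll (1+|t|)M(x/d)^{\sigma_0}/(\log(x/d))^A$, valid since $x/d\ge\sqrt x\ge Q^2\ge Q$, and the logarithmic factor loses only $2^A$ because $d\le\sqrt x$. A Rankin-type bound with shift $\alpha=1-\sigma_0$ dominates the resulting $d$-sum, $\sum_{d\le D,\,P^+(d)\le y}d^{-\sigma_0}\ll D^{1-\sigma_0}\log y=x^{(1-\sigma_0)/2}\log y$, which multiplied out produces the first error term.

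For part (b), expand $(1*f)(n)=\sum_{d\mid n}f(d)$ and use that $P^-(n)>y$ is equivalent to $P^-(d)>y$ and $P^-(n/d)>y$ to get
\[
\sum_{\substack{n\le x\\P^-(n)>y}}\frac{(1*f)(n)}{n^{it}}=\sum_{\substack{d\le x\\P^-(d)>y}}\frac{f(d)}{d^{it}}\sum_{\substack{m\le x/d\\P^-(m)>y}}m^{-it}.
\]
Split the outer sum at $d=\sqrt x$. For $d\le\sqrt x$, Lemma \ref{zeta} handles the inner sum, contributing a main term which collapses to $(x^{1-it}/(1-it))\prod_{p\le y}(1-1/p)\cdot\sum_{d\le\sqrt x,\,P^-(d)>y}f(d)/d$ plus an error that sums, via a Rankin-type bound $\sum_{d\le\sqrt x,\,P^-(d)>y}d^{-1+1/(30\log y)}\ll x^{1/(60\log y)}$, to $\ll x^{1-1/(60\log y)}/\log y$. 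For $d>\sqrt x$, swap $d\leftrightarrow m$ via Dirichlet's hyperbola identity and apply part (a) to the resulting $f$-twisted tail $\sum_{\sqrt x<d\le x/m,\,P^-(d)>y}f(d)/d^{it}$; this accounts for the $\sigma_2$-error. A final Abel-summation argument, again powered by part (a), extends $\sum_{d\le\sqrt x,\,P^-(d)>y}f(d)/d$ to $L_y(1,f)$ within the already-allowed error, assembling the displayed main term.

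The second formula in part (b), valid in the half-plane $\sigma\ge 1-1/(120\log y)$, follows the same blueprint but uses the refined second assertion of Lemma \ref{zeta} (with the constant $\gamma_{s,y}$) and exploits $A>2$ to secure absolute convergence of the Abel tails in that enlarged domain. The main technical obstacle throughout is balancing the several sources of error---the sieve error $xe^{-u}/\log y$, the Rankin loss $D^{1-\sigma_0}$, the Abel tails, and the contributions of size $(1+|t|)Q$ arising from the range $u\in[1,Q]$ in the Abel bound (which must be absorbed into the first stated error term using $x\ge Q^4$ and $\sigma_1\ge 4/5$). The specific choices $D=\sqrt x$ and splitting at $\sqrt x$ are the unique ones reconciling these constraints to produce the sharp exponents $1/(2\log y)$ and $1/(60\log y)$ stated in the lemma.
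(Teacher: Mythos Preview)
Your plan is correct and follows essentially the same route as the paper: the fundamental lemma with $D=\sqrt{x}$ combined with Abel summation for part (a), and for part (b) the hyperbola method split at $\sqrt{x}$, using Lemma~\ref{zeta} on the $1$-side, part (a) on the $f$-side, and a final Abel-summation tail estimate to pass from $\sum_{a\le\sqrt{x}}f(a)/a$ to $L_y(1,f)$. The only cosmetic difference is that the paper controls the sieve discrepancy via $\sum_{n\le x}(\lambda^+*1-\lambda^-*1)(n)$ rather than directly bounding $(\lambda^+*1)-\mathbf{1}_{P^-(\cdot)>y}$, but this is the standard way to justify the mass bound you assert.
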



\begin{remark}\label{Lbound-rk}
Note that 
\[
\lim_{s\to1} \left( \left( \frac1{s-1} +  \gamma_{s,y}  \right)  L_y(s,f) 
		- \frac{ x^{-s+1} }{s-1}  L_y(1,f)   \right)
		= ( \log x + \gamma_{1,y}) L_y(1,f) + L_y'(1,f).
\]
So, when $s=1$, the second formula in part (b) of Lemma \ref{Lbound-1} is interpreted to be
\als{
\sum_{\substack{ n\le x \\ P^-(n)>y } } \frac{(1*f)(n)}{n}
	&=\{ (\log x +  \gamma_{1,y})  L_y(1,f) + L_y'(1,f) \} \prod_{p\le y}\left(1-\frac1p\right)   \\
	& \quad +  O \left(\frac{A-1}{A-2}\frac{8^A(|t|+1)M}{(\log x)^{A-2}}+x^{- 1/(120\log y)}  \right).
}
\end{remark}


\begin{proof}[Proof of Lemma \ref{Lbound-1}] (a) Note that
\eq{Lbound-1-it}{
\sum_{n\le u} f(n)n^{-it} 
	&=  O(\sqrt{u}) + \int_{\sqrt{u}}^u w^{-it} d\left( \sum_{n\le w} f(n) \right) \\
	&=  O(\sqrt{u}) + u^{-it} \sum_{n\le u} f(n) + it \int_{\sqrt{u}}^u w^{-it-1}\left( \sum_{n\le w} f(n) \right) dw \\
	&\ll \sqrt{u} + \frac{ Mu^{\sigma_0} }{ (\log u)^A } + |t| \int_{\sqrt{u}}^u \frac{M w^{\sigma_0-1}}{(\log w)^A}dw 
		\ll \sqrt{u} +  \frac{2^A (|t|+1) M u^{\sigma_0} }{(\log u)^A} ,
}
for all $u\ge Q^2$. Next, we apply Lemma \ref{fund-lemma} with $D=x^{1/2}$ and we find that
\[
\sum_{ \substack{n\le x \\  P^-(n)>y } } \frac{f(n)}{ n^{it} } 
	= \sum_{n\le x} \frac{ f(n)(\lambda^+*1)(n) }{ n^{it} } 
		+ O\left( \sum_{n\le x}(\lambda^+*1-\lambda^-*1)(n) \right).
\]
For the first sum on the right hand side of the above relation, we have that
\als{
\sum_{n\le x}\frac{f(n)(\lambda^+*1)(n)}{n^{it}}
	= \sum_{\substack{d\le\sqrt{x}\\P^+(d)\le y}} \frac{\lambda^+(d)f(d)}{d^{it}}\sum_{m\le x/d}\frac{f(m)}{m^{it}} 
	&\ll \sum_{\substack{d\le\sqrt{x}\\P^+(d)\le y}} 
		\left( \frac{2^A (|t|+1) M x^{\sigma_0}}{(\log(x/d))^A d^{\sigma_0}} 
		+ \sqrt{\frac{x}{d} } \ \right)   \\
	&\ll \frac{4^A (|t|+1)(\log y) M x^{\sigma_1} }{(\log x)^A} + x^{3/4},
}
since $x/d\ge \sqrt{x} \ge Q^2$ and
\[
\sum_{\substack{ d\le \sqrt{x} \\ P^+(d)\le y}} \frac{x^{\sigma_0}}{d^{\sigma_0}}
	\le \sum_{\substack{ d\le \sqrt{x} \\ P^+(d)\le y}} \frac{x^{\sigma_1}}{d}
	\le x^{\sigma_1} \prod_{p\le y} \left(1 - \frac{1}{p} \right)^{-1}
	\ll  x^{\sigma_1}  \log y.
\]
Finally, we have that
\[
\sum_{n\le x} ( \lambda^+*1 - \lambda^-*1 )(n) 
	= \sum_{\substack{d\le\sqrt{x}\\P^+(d)\le y}} (\lambda^+(d)-\lambda^-(d) )
		\left(\frac xd +  O(1) \right) 
	\ll  \frac{x^{1- 1/(2\log y)} } {\log y},
\]
by Lemma \ref{fund-lemma}, and part (a) of the lemma follows.


\medskip

(b) Part (a) and Lemma \ref{zeta} imply that
\als{
\sum_{\substack{ n\le x \\ P^-(n)>y }} \frac{(1*f)(n)}{n^{it}}
	&= \sum_{\substack{ a\le\sqrt{x} \\ P^-(a)>y } } \frac{f(a)}{ a^{it} }
		\sum_{\substack{ b\le x/a \\ P^-(b)>y } } \frac1{b^{it}}
		+ \sum_{\substack{  b\le\sqrt{x} \\  P^-(b)>y  }}\frac1{b^{it}}
		\sum_{\substack{ \sqrt{x}< a\le x/b \\  P^-(a)>y  }}\frac{f(a)}{a^{it}}   \\
	&= \sum_{\substack{ a\le\sqrt{x} \\ P^-(a)>y } } \frac{f(a)}{ a^{it} }
		\left\{ \frac{(x/a)^{1-it}}{1-it}  \prod_{p\le y} \left(1 - \frac1p \right) 
			+ O\left(\frac{(x/a)^{1- \frac{1}{30\log y}} }{\log y} \right)  \right\} \\
	&\quad + O\left( \sum_{\substack{ b\le \sqrt{x} \\ P^-(b)>y } }
				\left\{ \frac{4^AM (|t|+1)(\log y) (x/b)^{\sigma_1}}{(\log(x/b))^A} 
					+ \frac{(x/b)^{1-\frac{1}{2\log y}}}{\log y} \right\} \right) .
}
In addition, we have that
\eq{Lbound-1-e4}{
\sum_{\substack{ b\le \sqrt{x} \\ P^-(b)> y}} \frac{x^{\sigma_1}}{b^{\sigma_1}}
	\le \sum_{\substack{ b\le \sqrt{x} \\ P^+(b)\le y}} \frac{x^{\sigma_2}}{b}
	\le x^{\sigma_2} \prod_{y<p\le \sqrt{x}} \left( 1- \frac{1}{p} \right)^{-1} 
	\ll  x^{\sigma_2} \cdot \frac{\log x}{\log y}
}
and
\al{
\sum_{\substack{ b\le \sqrt{x} \\ P^-(b)> y}} (x/b)^{1- \frac{1}{2\log y}}
	\le \sum_{\substack{ a\le \sqrt{x} \\ P^-(a)> y}} (x/a)^{1- \frac{1}{30\log y}}
	&\ll 1+ \sum_{ \log y< r\le \frac{\log x}{2} +1 }  (x/e^r)^{1-  \frac{1}{30\log y}}
		\sum_{\substack{ e^{r-1} \le a < e^r \\ P^-(a)> y}} 1 \nn
	&\ll 1+ \sum_{\log y< r\le \frac{\log x}{2} +1 }  \frac{ x^{1- \frac{1}{30\log y}} e^{\frac{r}{30\log y}}}{ \log y} \nn
	&\ll  x^{1- 1/(60\log y)} . \label{Lbound-1-e3}
}
So we deduce that
\[
\sum_{\substack{ n\le x \\ P^-(n)>y }} \frac{(1*f)(n)}{n^{it}}
	= \sum_{\substack{ a\le\sqrt{x} \\  P^-(a)>y } }  \frac{f(a)}{a}
		\frac{x^{1-it}}{1-it}   \prod_{p\le y}\left(1-\frac1p\right)
		+   O\left(  \frac{8^A(|t|+1)   Mx^{\sigma_2}  } {  (\log x)^{A-1}  }  
		+   \frac{x^{1 - 1/(60\log y) }  } {  \log y }  \right)    .
\]
Finally, note that, for any $s=\sigma+it$ with $\sigma \ge \sigma_1$, part (a) and partial summation imply that
\al{
\sum_{\substack{ a> \sqrt{x} \\  P^-(a)>y } }  \frac{f(a)}{a}
	&=  \int_{\sqrt{x}}^\infty \frac{1}{w} d\left( \sum_{\substack{ a\le w \\ P^-(a)>y}} f(a) \right) 
		=  \frac{-1}{\sqrt{x}} \sum_{\substack{ a\le \sqrt{x} \\ P^-(a)>y}} f(a) 
		+ \int_{\sqrt{x}}^\infty \left( \sum_{ \substack{ a\le w \\ P^-(a)>y}} f(n) \right)   \frac{dw }{w^2} \nn
	&\ll \frac{ 8^A (\log y) M x^{\frac{\sigma_1-1}{2}} }{ (\log x)^A } + \frac{x^{ - 1 / (4\log y) } } {\log y} \nn
	&\quad	+  \int_{\sqrt{x}}^\infty \left( \frac{4^A (\log y) M w^{\sigma_1-1} }{w (\log w)^A } 
	   	+  \frac{1}{ w^{1+ \frac{1}{2\log y} } \log y} \right) dw  \nn
	& \ll \frac{A}{A-1} \frac{8^A(\log y) M x^{\sigma_2-1} }{ (\log x)^{A-1}} +  x^{- 1/(4\log y) } ,
		\label{Lbound-1-e2}
}
since $w^{\sigma_1-1} \le x^{\frac{\sigma_1-1}{2}} = x^{\sigma_2-1}$ for $w\ge \sqrt{x}$, thus proving our first claim.


\medskip

The proof of our second claim is similar. First, note that
\eq{sievee1}{
\sum_{\substack{ n\le x \\ P^-(n)>y } } \frac{(1*f)(n)}{n^s}
	= \sum_{ \substack{ a\le\sqrt{x} \\ P^-(a)>y } }  \frac{f(a)}{a^s}  
		\sum_{\substack{b\le x/a \\ P^-(b)>y } } \frac1{b^s} 
		+\sum_{\substack{b\le\sqrt{x} \\  P^-(b)>y }}  \frac1{b^s}
	\sum_{\substack{  \sqrt{x}<a\le x/b \\   P^-(a)>y  }}   \frac{f(a)}{a^s}  .
}
Since $A\ge2$ and $\sigma\ge\sigma_2\ge\sigma_1$ by assumption, following a similar argument with the one leading to \eqref{Lbound-1-e2}, we deduce that
\eq{sievee2}{
\sum_{\substack{ u<a\le u' \\ P^-(a)>y } } \frac{f(a)}{a^s}
	&=  \int_{u}^{u'} \frac{1}{w^\sigma} d\left( \sum_{\substack{ a\le w \\ P^-(a)>y}} \frac{f(a)}{a^{it}} \right)  \\
	&\ll  \frac{ 4^A\sigma(|t|+1) (\log y)M }{ (\log u)^{A-1}   u^{\sigma-\sigma_1} }  
			+  u^{1 - \sigma - 1/(2\log y)}   \quad(u'\ge u\ge y).
}
Relations \eqref{sievee2}, \eqref{Lbound-1-e4} and \eqref{Lbound-1-e3} imply that
\als{
\sum_{\substack{b\le\sqrt{x} \\  P^-(b)>y }}  \frac1{b^s}
	\sum_{\substack{  \sqrt{x}<a\le x/b \\   P^-(a)>y  }}   \frac{f(a)}{a^s}
	& \ll \sum_{\substack{ b\le\sqrt{x} \\  P^-(b)>y }}  \frac1{b^\sigma } 
		  \left(  \frac{4^A\sigma(|t|+1)(\log y)M} { (x/b)^{\sigma-\sigma_1}  (\log(x/b))^{A-1} } 
	 	  	+  (x/b)^{1 - \sigma - 1/(2\log y) }       \right)  \\
	&\ll \frac{8^A\sigma(|t|+1)(\log y) M } { (\log x)^{A-1} x^{\sigma-\sigma_2}} 
		+ x^{1-\sigma - 1/(60\log y) }.
}
Furthermore, Lemma \ref{zeta} and relation \eqref{Lbound-1-e3} yield that
\als{
&\sum_{ \substack{ a\le\sqrt{x} \\ P^-(a)>y } }  \frac{f(a)}{a^s}  
		\sum_{\substack{b\le x/a \\ P^-(b)>y } } \frac1{b^s} \\
&\ = \sum_{\substack{ a\le\sqrt{x} \\ P^-(a)>y } } \frac{f(a)}{a^s}
		\left( \frac{1-(x/a)^{-s+1}}{s-1}  +    \gamma_{s,y}\right)   \prod_{p\le y}\left(1-\frac1p\right)    
	+   O\left(   \sum_{\substack{ a\le\sqrt{x} \\ P^-(a)>y } } \frac{1}{a^\sigma} 
		\frac{(x/a)^{1-\sigma - 1/(30\log y)} }{  \log y}  \right) \\
&\ = \sum_{\substack{ a\le\sqrt{x} \\ P^-(a)>y } } \frac{f(a)}{a^s}
		\left( \frac{1-(x/a)^{-s+1}}{s-1}  +    \gamma_{s,y}\right)   \prod_{p\le y}\left(1-\frac1p\right)    
		+ O\left( \frac{ x^{1-\sigma - 1/(60\log y)} }{\log y}   \right)
}
Inserting the above estimates into \eqref{sievee1} and using our assumption that $\sigma \ge \max\{\sigma_2,1-1/(120\log y)\}$, we deduce that
\eq{Lbound-1-e5}{
\sum_{\substack{ n\le x \\ P^-(n)>y } } \frac{(1*f)(n)}{n^s}
	&= \sum_{\substack{ a\le\sqrt{x} \\ P^-(a)>y } } \frac{f(a)}{a^s}
		\left( \frac{1-(x/a)^{-s+1}}{s-1}  +    \gamma_{s,y}\right)   \prod_{p\le y}\left(1-\frac1p\right)    \\
	&\quad + O\left( \frac{8^A(|t|+1) (\log y) M } { (\log x)^{A-1} } + \frac{1}{x^{1/(120\log y)} } \right).
}
Finally, we extend the summation over $a$ to all integers with no prime factors $\le y$ and we estimate the error term. Fix $N>x$. First, relation \eqref{sievee2} and the fact that $\gamma_{s,y}\ll \log y$, by Lemma \ref{zeta}, imply that
\eq{Lbound-1-e6}{
\sum_{\substack{ \sqrt{x} < a\le N \\ P^-(a)>y } }
	 \frac{f(a)}{a^s}   \cdot \gamma_{s,y} \prod_{p\le y}\left(1-\frac1p\right)  
	&\ll \frac{ 8^A\sigma(|t|+1) (\log y) M }{ (\log x)^{A-1}   x^{\frac{\sigma-\sigma_1}{2}} } 
		+   x^{\frac{1-\sigma}{2} - 1/(4\log y)}  \\
	&\ll \frac{ 8^A (|t|+1) (\log y) M }{ (\log x)^{A-1}  } + x^{- 1/ (8\log y) },
}
where the last inequality follows by our assumption that 
$\sigma\ge \max\{\sigma_2,1-1/(120\log y)\} \ge \max\{\sigma_1,1-1/(4\log y)\}$. Next, using the identity
\[
\frac{1-(x/a)^{-s+1}}{s-1} =  \int_1^{x/a} \frac{du}{u^s},
\]
we deduce that 
\als{
\sum_{\substack{\sqrt{x} < a\le N \\  P^-(a)>y } }  \frac{f(a)}{a^s}  \frac{1-(x/a)^{-s+1}}{s-1}
	&=  \sum_{\substack{\sqrt{x} < a \le x \\  P^-(a)>y } }  \frac{f(a)}{a^s}  \int_{1}^{x/a} \frac{du}{u^s}
		-  \sum_{\substack{x< a\le N \\  P^-(a)>y } }  \frac{f(a)}{a^s}  \int_{x/a}^{1} \frac{du}{u^s} \\
	&=  \int_1^{\sqrt{x}} \left(  \sum_{\substack{ \sqrt{x} < a \le x/u }}  \frac{f(a)}{a^s}\right) \frac{du}{u^s}
		- \int_{x/N}^1\left(\sum_{x/u<a\le N}\frac{f(a)}{a^s}\right)\frac{du}{u^s}  .
}
So relation \eqref{sievee2} yields that
\als{
\sum_{\substack{\sqrt{x} < a\le N \\  P^-(a)>y } }  \frac{f(a)}{a^s}  \frac{1-(x/a)^{-s+1}}{s-1}
	&\ll \left( \frac{8^A\sigma(|t|+1)(\log y)M } { x^{\frac{\sigma-\sigma_1}2}(\log x)^{A-1} } 
		 +   x^{\frac{1-\sigma}{2} - 1/(4\log y) } \right) \int_1^{\sqrt{x}}\frac{du}{u^\sigma}    \\
	&\quad +  \int_0^1  \left( \frac{4^A\sigma(|t|+1)(\log y)M}{(x/u)^{\sigma-\sigma_1} 
		\log^{A-1}(x/u)} +  (x/u)^{1-\sigma  - 1/(2\log y)}  \right) \frac{du}{u^\sigma}  .
}
Since $\sigma\ge \max\{\sigma_2,1-1/(120\log y)\}\ge \max\{ \sigma_1, 1- 1/(8\log y)\} $, we find that
\[
\frac{\sigma}{ x^{\frac{\sigma-\sigma_1}2} } 
	\int_1^{\sqrt{x}}\frac{du}{u^\sigma}
	= \frac{\sigma}{ x^{\frac{\sigma-\sigma_1}2} } 
		\int_0^{\frac{\log x}2}e^{v(1-\sigma)}dv
	\ll \frac{\sigma ( 1+x^{\frac{1-\sigma}2}) \log x }{ x^{\frac{\sigma-\sigma_1}{2} } } 
	\ll \log x,
\]
\[
x^{ \frac{1-\sigma}{2} -  1/(4\log y)} 
	\int_1^{\sqrt{x}} \frac{du}{u^{\sigma}}
	\ll  x^{ \frac{1-\sigma}{2}- 1/(4\log y)}  ( 1+x^{\frac{1-\sigma}2}) \log x
	\ll \frac{\log x}{x^{1/(8\log y)}} \ll \frac{\log y}{x^{1/(10\log y)}} ,
\]
\als{
\frac{\sigma}{x^{\sigma-\sigma_1}} \int_0^1\frac{du}{u^{\sigma_1}\log^{A-1}(x/u)}
	&= \frac{\sigma}{x^{\sigma-\sigma_1}}  \int_0^\infty\frac{dv}{e^{(1-\sigma_1)v}(\log x+v)^{A-1}} 
	\le  \frac{\sigma}{x^{\sigma-\sigma_1}}  \int_{0}^\infty\frac{dv}{(\log x+v)^{A-1}} \\
	&= \frac{\sigma}{x^{\sigma-\sigma_1}} \frac1{(A-2)(\log x)^{A-2}} 
	\ll \frac1{(A-2)(\log x)^{A-2}} ,
}
and 
\[
 x^{1-\sigma- 1/(2\log y) }
	\int_0^1 \frac{du}{u^{1- 1/(2\log y)}}
	\ll  x^{1-\sigma- 1/(2\log y) } \log y
	\ll \frac{\log y}{x^{1/(4\log y)}} .
\]
So we deduce that
\als{
\sum_{\substack{\sqrt{x} < a\le N \\  P^-(a)>y } }  \frac{f(a)}{a^s}  \frac{1-(x/a)^{-s+1}}{s-1}
	&\ll   \frac{A-1}{A-2}   \frac{8^A(|t|+1)(\log y)M } {  (\log x)^{A-2} } + \frac{\log y}{x^{1/(10\log y)}} .
}
Inserting the above estimate and relation \eqref{Lbound-1-e6} into \eqref{Lbound-1-e5}, we complete the proof of the second portion of part (b) of the lemma too.
\end{proof}


Finally, we have the following pointwise bound on the derivatives of $L_y(s,f)$.


\begin{lemma}\label{Lbound-2} Let $Q\ge 10$, $\epsilon\in(0,1)$, $y\ge Q^\epsilon+1$ and $f:\SN\to\SC$ be a completely multiplicative function.

\begin{enumerate}

\item If $f$ satisfies \eqref{small-Q} for some $A\ge2$, then for $\sigma\ge1$ and $k\in\SZ\cap[0,A-1]$ we have that
\[
| L_y^{(k)}(\sigma,f) | 
	\ll_{A,\epsilon} (\log y)^k  \min \left\{  \frac{A-k}{A-k-1},  
		\log\left(2+ \frac1{(\sigma-1)\log y} \right) \right\} .
\]

\item If $f$ satisfies \eqref{small-power} with $\delta=1/\log Q$, then for $\sigma\ge \max\{1-1/(4\log Q),1-1/(4\log y)\}$ and $k\in\SN\cup\{0\}$, we have that
\[
| L_y^{(k)}(\sigma,f) | \ll \frac{(k+1)! (4\log(yQ))^k}{\epsilon}.
\]
\end{enumerate}
\end{lemma}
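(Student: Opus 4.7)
The plan is to prove both parts by Abel summation applied to the partial sums $S(u)=\sum_{\substack{n\le u\\ P^-(n)>y}}f(n)$. Differentiating the defining Dirichlet series termwise and integrating by parts yields
\[
|L_y^{(k)}(\sigma,f)|\le \int_1^\infty |S(u)|\cdot\frac{k(\log u)^{k-1}+\sigma(\log u)^k}{u^{\sigma+1}}\,du,
\]
and I would split this integral at $u=Q^4$. On the short range $[1,Q^4]$ I use the elementary sieve bound $|S(u)|\le |\{n\le u:P^-(n)>y\}|\ll u/\log y + 1$, while on $[Q^4,\infty)$ I invoke Lemma \ref{Lbound-1}(a) applied with $t=0$ to obtain essentially power-saving control over $|S(u)|$.

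For part (a), Lemma \ref{Lbound-1}(a) with $\sigma_0=1$ and $M=(\log Q)^{A-2}$ yields $|S(u)|\ll 4^A(\log y)(\log Q)^{A-2}u/(\log u)^A+u^{1-1/(2\log y)}/\log y$ for $u\ge Q^4$. After the change of variable $v=\log u$, using $y\ge Q^\epsilon+1$ (so $\log Q\le (\log y)/\epsilon$) and absorbing the universal factor $\int_0^\infty v^k e^{-v}dv=k!$ into the $O_A$-constant (legitimate since $k\le A-1$ is bounded in terms of $A$), the short-range integral evaluates to $\ll_{A,\epsilon}(\log y)^k$. The long-range integral is dominated by $(\log y)(\log Q)^{A-2}\int_{Q^4}^\infty (\log u)^{k-A}u^{-\sigma}du$: when $k\le A-2$ this integral is finite even at $\sigma=1$ and gives $\ll_{A,\epsilon}(\log y)^k/(A-k-1)$, matching the first option in the $\min$; when $k=A-1$ the exponent of $\log u$ is exactly $-1$, and for $\sigma>1$ the substitution $v=\log u$ turns the integral into the exponential integral $E_1((\sigma-1)\cdot 4\log Q)\ll\log(2+1/((\sigma-1)\log y))$, producing the second option.

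For part (b), I apply Lemma \ref{Lbound-1}(a) instead with $\sigma_0=1-1/\log Q$ and $M=1$, obtaining $|S(u)|\ll 16(\log y)u^{1-1/(2\log Q)}/(\log u)^2+u^{1-1/(2\log y)}/\log y$ on $u\ge Q^4$. The hypothesis $\sigma\ge\max\{1-1/(4\log Q),1-1/(4\log y)\}$ guarantees that the effective exponent in the long-range integrand is at least $1+c'/\log(yQ)$ for some $c'>0$, so after substituting $v=\log u$ this integral takes the form $\int v^k e^{-\alpha v}dv$ with $\alpha\asymp 1/\log(yQ)$, bounded by $k!/\alpha^{k+1}\asymp k!(\log(yQ))^{k+1}$. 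Dividing by $\log y\ge\epsilon\log Q$ then produces the $1/\epsilon$ and turns one factor of $\log(yQ)$ into a constant, yielding the advertised bound $(k+1)!(4\log(yQ))^k/\epsilon$ after controlling the short-range piece similarly.

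The main technical difficulty will be keeping the right power of $\log y$ in part (a)---namely $(\log y)^k$ without an extra factor. On the short range this hinges on noticing that the $1/\log y$ in the sieve bound cancels one of the logarithms from $\int_1^{Q^4}(\log u)^k/u\,du\asymp(\log Q)^{k+1}/(k+1)$, and that the comparison $\log Q\le(\log y)/\epsilon$ then converts the remaining powers of $\log Q$ into powers of $\log y$. In part (b), by contrast, the $(k+1)!$ factor is intrinsic to the gamma-type integral and must appear explicitly in the final estimate.
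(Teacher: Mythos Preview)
Your approach is essentially the same as the paper's: both arguments use Abel summation on $S(u)$, invoke Lemma~\ref{Lbound-1}(a) on the long range, treat the short range with the trivial sieve bound $|S(u)|\ll u/\log y$, and finish with a logarithmic change of variable. The paper merely packages the two parts into a single computation via a unified hypothesis.

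Two small technical points to watch. First, Lemma~\ref{Lbound-1}(a) requires $y\le\sqrt{x}$, so when $y>Q^2$ you cannot apply it on $[Q^4,y^2)$; the split should be at $\max\{y^2,Q^4\}$ (this is what the paper does), and the sieve bound on the enlarged short range still gives $\ll_{A,\epsilon}(\log y)^k$ since $y\ge Q^\epsilon$. Second, the statement asserts the \emph{minimum} of the two quantities in part (a), so you need the logarithmic bound for every $k\le A-1$, not only when $k=A-1$; this follows immediately from your $k=A-1$ computation by the monotonicity $(\log u)^{k-A}\le(\log u)^{-1}$ for $u\ge Q^4$.
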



\begin{proof} We are going to prove the two parts simultaneously. We assume that $f$ satisfies the relation
\eq{small-alt}{
\left| \sum_{n\le x} f(n) \right| \le \frac{x^{1-\alpha/\log Q} (\log Q)^{B-2}}{(\log x)^B} \quad(x\ge Q)
}
for some $B\ge2$ and some $\alpha\in[0,1]$. In part (a) we know that this is true with $B=A$ and $\alpha=0$, whereas in part (b) relation \eqref{small-alt} holds with $B=2$ and $\alpha=1$.  Furthermore, we consider $k\in\SN\cup\{0\}$ and $\sigma\ge\max\{1-\alpha/(4\log Q),1-1/(4\log y)\}$. Lemma \ref{Lbound-1}(a) and partial summation imply that
\als{
L_y^{(k)}(\sigma,f) 
	&= \int_y^\infty\frac{(-\log x)^k}{x^\sigma} d\left( \sum_{\substack{n\le x\\ P^-(n)>y}}f(n)\right) \\
	&= (-1)^k \int_y^\infty  \frac{ \sigma(\log x)^k  -  k(\log x)^{k-1} }{x^{\sigma+1}} 
		\left( \sum_{\substack{n\le x\\ P^-(n)>y}}f(n)\right) dx \\
	&\ll  (k+\sigma)
		 \int_{\max\{y^2,Q^4\}}^\infty \frac{(\log x)^k}{x^{\sigma+1}} 
		 	\left(  \frac{(\log y)(4\log Q)^{B-2} x^{1- \alpha/(2\log Q) } }  {(\log x)^{B} } 
			+  \frac{x^{1- 1/(2\log y)}} {\log y }  \right) dx \\
	&\quad +  \int_{ y} ^{\max\{y^2,Q^4\}} \frac{(k+\sigma)(\log x)^k}{x^{\sigma} \log y} dx.
}
The last integral appearing in the above relation is at most
\als{
 \int_{ y} ^{\max\{y,Q^4\}} \frac{(k+\sigma)(4\log(yQ))^k}{x^{\sigma} \log y} dx
 	&\ll  \frac{(k+1) (4\log(yQ))^{k+1} }{\log y} 
		\ll \frac{(k+1) (4\log(yQ))^{k}}{\epsilon},
}
by our assumption that $y\ge Q^\epsilon+1$. Thus making the change of variable $x=y^u$ yields that
\al{
\frac{ L_y^{(k)}(\sigma,f) }{k+1} 
	&\ll  \sigma
		 \int_1^\infty\left(  \frac{ (\log y)^{k-B+2} (4\log Q)^{B-2}}
		{u^{B-k} y^{(\sigma-1+\alpha/(2\log Q))u}  } 
			+ \frac{(\log y)^k u^k} {y^{(\sigma -1)u} e^{u/2} }  \right) du 
			+ \frac{ (4\log(yQ))^{k}}{\epsilon} \nn
	&\ll  \int_1^\infty\left(  \frac{ \sigma 4^B \epsilon^{2-B}(\log y)^k }
		{  u^{B-k} y^{(\sigma-1+\alpha/(2\log Q))u} }
			+ \frac{(\log y)^k u^k}{ e^{u/4} }  \right) du  +  \frac{(4\log(yQ))^{k}}{\epsilon} \nn
	&\le  \sigma 4^B \epsilon^{2-B}(\log y)^k
		\int_1^\infty   \frac{ du } {  u^{B-k } y^{(\sigma-1+\alpha/(2\log Q))u} }
			+  4^{k+1} k! (\log y)^k  +  \frac{(4\log(y Q) )^{k}}{\epsilon}  , \label{Lbound-2-e1}
}
since $\sigma \ge 1-1/(4\log y)$ and $\int_0^\infty t^k e^{-t} dt = k!$.

It remains to estimate the integral in \eqref{Lbound-2-e1}. If $\alpha=1$, $B=2$ and $\sigma\ge\max\{1-1/(4\log Q),1-1/(4\log y)\}$, then
\als{
	 \int_1^\infty   \frac{\sigma \cdot (\log y)^k  du } {  u^{B-k } y^{(\sigma-1+\alpha/(2\log Q))u} }
		\ll (\log y)^k \int_1^\infty   \frac{ u^{k} du } { y^{u/(4\log Q)} }
		\le \frac{ k! (4\log Q)^{k+1} }{\log y}  
		\ll \frac{ k! (4\log Q)^k }{\epsilon},
}
by our assumption that $y\ge Q^\epsilon+1$. The above inequality together with \eqref{Lbound-2-e1} completes the proof of part (b). Finally, in order to show part (a), we estimate the  integral in \eqref{Lbound-2-e1} when $B=A$, $\alpha=0$, $k\le A-1$ and $\sigma\ge1$. In this case we have that
\eq{Lbound-2-e2}{
\sigma \int_1^\infty   \frac{ du } {  y^{(\sigma-1+\alpha/(2\log Q)) u } u^{B-k} }
	\ll \int_1^\infty   \frac{ du } {u^{A-k}} 
	= \frac{1}{A-k-1}
}
and, if we set $Y=\max\{1,(  (\sigma-1)\log y  )^{-1}    \}$, then
\eq{Lbound-2-e3}{
\sigma \int_1^\infty   \frac{ du } {  y^{(\sigma-1+\alpha/(2\log Q)) u } u^{B-k} }
	\ll \int_1^\infty   \frac{ du } {  y^{(\sigma-1)u/2} u} 
	&\le \int_1^Y  \frac{ du } {   u } +  \frac{1}{Y} \int_Y^\infty   \frac{ du } {  y^{ (\sigma - 1) u /2 }}  \\
	&\le \log Y + 2e^{-1/2} .
}
Combining relations \eqref{Lbound-2-e2} and \eqref{Lbound-2-e3} with \eqref{Lbound-2-e1} completes the proof of part (a) of the lemma too.
\end{proof}


\section{Distances of multiplicative functions}\label{distances}

This section is devoted to studying some properties of the distance function and establishing Theorem \ref{real-complex}. We start with two straightforward results, which we state for easy reference. The first one of them links the size of $L_y(s,f)$ to a certain distance function. In its current formulation it is due to Granville and Soundararajan\ \cite{gs}, but similar results have appeared before in work of Elliott \cite[Lemma 6.6 in p. 230, and p. 253]{E} and Tenenbaum\ \cite[Lemma 2.1, p. 326]{T}. A sketch of its proof is given in \cite[Lemma 3.2]{K}.


\begin{lemma}\label{dist-l1}Let $x,y\ge2$, $t\in\SR$ and $f:\SN\to\SU$ a multiplicative function. Then 
\[
\log\left|L_y\left(1+\frac1{\log x}+it,f\right)\right|=\sum_{y<p\le x}\frac{\Re( f(p) p^{-it} ) }{p} +O(1).
\]
\end{lemma}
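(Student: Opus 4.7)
The plan is to take the logarithm of the Euler product for $L_y(s,f)$, separate the contribution of the prime $p$ from that of higher prime powers, and then reduce the resulting prime sum (over $p>y$, with $s=1+1/\log x+it$) to the claimed truncated sum over $y<p\le x$ with an error of size $O(1)$.

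First, since $f:\SN\to\SU$ is multiplicative and $s=\sigma+it$ with $\sigma=1+1/\log x>1$, the series $L_y(s,f)$ converges absolutely and has an Euler product $L_y(s,f)=\prod_{p>y}F_p(s)$ with $F_p(s)=1+\sum_{k\ge1}f(p^k)/p^{ks}$. Since $y\ge 2$ forces $p\ge 3$, I would bound
\[
\left|\sum_{k\ge1}\frac{f(p^k)}{p^{ks}}\right|\le\frac{1}{p^\sigma-1}\le\frac12,
\]
so each Euler factor stays away from $0$ and its principal logarithm is well-defined. Using $|\log(1+z)-z|\le|z|^2$ for $|z|\le1/2$, I would write
\[
\log L_y(s,f)=\sum_{p>y}\frac{f(p)}{p^s}+\sum_{p>y}\Bigl(\sum_{k\ge2}\frac{f(p^k)}{p^{ks}}\Bigr)+O\!\Bigl(\sum_{p>y}\frac{1}{p^{2\sigma}}\Bigr)=\sum_{p>y}\frac{f(p)}{p^s}+O(1),
\]
since the prime-power tail is dominated by $\sum_p 1/p^{2}$. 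Taking real parts yields
\[
\log|L_y(s,f)|=\sum_{p>y}\frac{\Re(f(p)p^{-it})}{p^{1+1/\log x}}+O(1).
\]

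Next I would cut the sum at $p=x$. For the tail $p>x$, I would apply Abel summation against the Mertens estimate $\sum_{p\le u}1/p=\log\log u+O(1)$. A direct computation with the substitution $v=(\log u)/\log x$ shows
\[
\sum_{p>x}\frac{1}{p^{1+1/\log x}}=O(1),
\]
the key cancellation being that the boundary term $-e^{-1}(\log\log x)$ exactly offsets the leading contribution of the integral $\int_x^\infty u^{-1-1/\log x}\log\log u\,du/\log x$. This deals with the primes exceeding $x$. For $y<p\le x$, I would use the elementary bound $1-p^{-1/\log x}=1-e^{-\log p/\log x}\le(\log p)/\log x$, giving
\[
\sum_{y<p\le x}\frac{|f(p)|}{p}\bigl(1-p^{-1/\log x}\bigr)\le\frac{1}{\log x}\sum_{p\le x}\frac{\log p}{p}=O(1),
\]
again by Mertens. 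Combining these two estimates lets me replace $p^{1+1/\log x}$ by $p$ in the denominator at the cost of an $O(1)$ error, which gives exactly the formula in the statement.

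The only mildly delicate step is the tail estimate $\sum_{p>x}p^{-1-1/\log x}=O(1)$: naive bounds produce a spurious $\log\log x$ that has to be cancelled via careful partial summation. Everything else is routine manipulation of the Euler product and Mertens' theorem.
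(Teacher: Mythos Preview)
Your proof is correct and follows the standard route: take logarithms in the Euler product, isolate the linear prime contribution, and use Mertens-type estimates to truncate the sum at $x$ and to remove the extra $p^{-1/\log x}$ factor. The paper does not give its own proof of this lemma, referring instead to \cite[Lemma 3.2]{K} and earlier work of Elliott and Tenenbaum; your argument is essentially what one finds in those sources (the tail estimate $\sum_{p>x}p^{-1-1/\log x}=O(1)$ can be obtained a bit more directly via $\pi(u)\ll u/\log u$ and the substitution $v=(\log u)/\log x$, avoiding the delicate $\log\log x$ cancellation you mention, but your version is fine).
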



The second lemma is an immediate consequence of Lemmas \ref{Lbound-2}(a) and \ref{dist-l1}.


\begin{lemma}\label{dist-l2} Let $Q\ge3$ and $f:\SN\to\SU$ be a completely multiplicative function satisfying \eqref{small-Q} with $A=2$. Then there is an absolute constant $c$ such that
\[
\sum_{y<p\le x}\frac{\Re(f(p))}p\le c    \quad (x\ge y \ge Q). 
\]
\end{lemma}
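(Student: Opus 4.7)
\smallskip

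\noindent\textbf{Proof proposal.} The statement looks like a one-line consequence of the two lemmas cited just above it, so the plan is essentially to line up the parameters correctly and read off the result.

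First I would apply Lemma \ref{Lbound-2}(a) with the choices $A=2$, $k=0$, and $\epsilon=1$ (so the hypothesis $y\ge Q^\epsilon+1$ becomes $y\ge Q+1$, which is compatible with $y\ge Q$ after absorbing the boundary case $y=Q$ into the implied constant, or by mildly enlarging $Q$ at the cost of a harmless constant). Taking $\sigma = 1+1/\log x$, the first branch of the $\min$ in Lemma \ref{Lbound-2}(a) gives $(A-k)/(A-k-1)=2$, hence
\[
\left|L_y\!\left(1+\tfrac{1}{\log x},f\right)\right| \ll 1,
\]
where the implied constant is absolute.

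Next I would invoke Lemma \ref{dist-l1} with $t=0$, which furnishes
\[
\log\left|L_y\!\left(1+\tfrac{1}{\log x},f\right)\right| \;=\; \sum_{y<p\le x}\frac{\Re(f(p))}{p}\;+\;O(1).
\]
Combining the two displays, the left-hand side is $O(1)$, and therefore
\[
\sum_{y<p\le x}\frac{\Re(f(p))}{p}\;\le\; c
\]
for some absolute constant $c$, as required.

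There is essentially no obstacle here; the only care needed is to ensure the application of Lemma \ref{Lbound-2}(a) is legitimate under the hypothesis $y\ge Q$ (rather than $y\ge Q^\epsilon+1$), which is a cosmetic matter of absorbing a bounded edge case, and to note that condition \eqref{small-Q} with $A=2$ is exactly what Lemma \ref{Lbound-2}(a) requires. No cancellation argument or distance inequality is needed beyond what is already packaged in Lemmas \ref{Lbound-2}(a) and \ref{dist-l1}.
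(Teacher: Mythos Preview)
Your proposal is correct and is precisely the paper's own proof: the paper states that Lemma~\ref{dist-l2} ``is an immediate consequence of Lemmas~\ref{Lbound-2}(a) and~\ref{dist-l1},'' and you have spelled this out accurately. The only cosmetic slip is that Lemma~\ref{Lbound-2}(a) requires $\epsilon\in(0,1)$, so you should take, say, $\epsilon=1/2$ rather than $\epsilon=1$; then $y\ge Q\ge Q^{1/2}+1$ holds (for $Q$ past a bounded threshold, the small cases being absorbed into the constant), and the rest goes through unchanged.
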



Next, we have the triangle inequality \cite{gs} for the distance function defined in Section \ref{intro}.

\begin{lemma}\label{triangle}Let $f,g,h:\SN\to\SU$ be multiplicative functions and $x\ge y\ge1$. Then 
\[
\SD(f,g;y,x) +  \SD(g,h;y,x) \ge   \SD(f,h;y,x).
\]
\end{lemma}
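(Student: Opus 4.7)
The plan is to reduce to a pointwise triangle inequality on the closed unit disc $\SU$ and then invoke Minkowski's inequality. Observe that $\SD(f,g;y,x)$ is the weighted $\ell^{2}$-norm of the sequence $\bigl(\sqrt{1-\Re(f(p)\overline{g(p)})}/\sqrt{p}\bigr)_{y<p\le x}$, so if we establish the pointwise bound
\[
\sqrt{1-\Re(\alpha\overline{\gamma})}\le\sqrt{1-\Re(\alpha\overline{\beta})}+\sqrt{1-\Re(\beta\overline{\gamma})}
\]
for all $\alpha,\beta,\gamma\in\SU$, then applying it with $\alpha=f(p),\,\beta=g(p),\,\gamma=h(p)$, dividing through by $\sqrt{p}$, and using Minkowski's inequality $\|u+v\|_{2}\le\|u\|_{2}+\|v\|_{2}$ immediately yields $\SD(f,h;y,x)\le\SD(f,g;y,x)+\SD(g,h;y,x)$.

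To prove the pointwise inequality, I would square both sides. After rearranging, the claim becomes equivalent to
\[
\Re(\alpha\overline{\beta})+\Re(\beta\overline{\gamma})-\Re(\alpha\overline{\gamma})\le 1+2\sqrt{(1-\Re(\alpha\overline{\beta}))(1-\Re(\beta\overline{\gamma}))}.
\]
The crucial algebraic identity, obtained by direct expansion, is
\[
\Re(\alpha\overline{\beta})+\Re(\beta\overline{\gamma})-\Re(\alpha\overline{\gamma})=|\beta|^{2}+\Re\bigl((\alpha-\beta)(\overline{\beta}-\overline{\gamma})\bigr).
\]
Using $|\beta|\le 1$ and $\Re(z)\le|z|$, the left-hand side is therefore at most $1+|\alpha-\beta|\,|\beta-\gamma|$. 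Combining this with the elementary estimate $|\alpha-\beta|^{2}=|\alpha|^{2}+|\beta|^{2}-2\Re(\alpha\overline{\beta})\le 2(1-\Re(\alpha\overline{\beta}))$ (valid because $|\alpha|,|\beta|\le 1$) and its analogue for $|\beta-\gamma|$ reproduces the right-hand side of the squared inequality exactly.

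No step is genuinely difficult; the only mild subtlety is spotting the identity that isolates $|\beta|^{2}\le 1$ as the source of the additive constant $1$. Once that is in place, the trivial bound $\Re(z)\le|z|$ together with the crude estimate on $|\alpha-\beta|$ closes the proof of the pointwise inequality, and Minkowski's inequality upgrades it to the weighted $\ell^{2}$ conclusion demanded by the lemma.
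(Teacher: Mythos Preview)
Your proof is correct. The paper does not actually prove this lemma but merely cites Granville--Soundararajan \cite{gs}; your argument---the pointwise inequality on $\SU$ via the identity $\Re(\alpha\overline{\beta})+\Re(\beta\overline{\gamma})-\Re(\alpha\overline{\gamma})=|\beta|^{2}+\Re\bigl((\alpha-\beta)\overline{(\gamma-\beta)}\bigr)$ followed by Minkowski---is precisely the standard proof given in that reference, so nothing is missing.
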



The following result is Lemma 3.3 in \cite{K}.

\begin{lemma}\label{dist1} Let $y_2\ge y_1\ge y_0\ge2$. Consider a multiplicative function $f:\SN\to\SU$ such that 
\[
\left| L_{y_0}'\left( 1+ \frac1{\log x},f \right) \right|  \le c\log y_0   \quad(y_1\le x\le y_2)
\]
for some $c\ge1$ and 
\[
\SD^2(f(n),\mu(n);y_0,x)  \ge  \delta  \log\left( \frac{\log x}{\log y_0} \right) -  M  \quad(y_1\le x\le y_2)
\] 
for some $\delta>0$ and $M\ge0$. Then 
\[
\left| \sum_{y_1<p\le y_2} \frac{f(p)}p \right|  \ll_{c,M}\frac1\delta.
\]
\end{lemma}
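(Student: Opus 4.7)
The plan is to express $\sum_{y_1<p\le y_2} f(p)/p$ as a difference of complex logarithms of $L_{y_0}(\sigma,f)$ at two real values of $\sigma$, and then estimate this difference by integrating along the real axis. Expanding the Euler product $L_{y_0}(s,f)=\prod_{p>y_0}\bigl(1+\sum_{k\ge 1}f(p^k)/p^{ks}\bigr)$ and using $|f(p^k)|\le 1$ to control the $k\ge 2$ terms, one obtains the complex analogue of Lemma \ref{dist-l1}:
\[
\log L_{y_0}\!\left(1+\tfrac{1}{\log x},f\right)=\sum_{y_0<p\le x}\frac{f(p)}{p}+O(1) \qquad (y_1\le x\le y_2),
\]
where $\log L_{y_0}$ is the branch continuous from $\sigma\to+\infty$. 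Applied at $x=y_1$ and $x=y_2$, this reduces the claim to bounding the variation of $x\mapsto \log L_{y_0}(1+1/\log x,f)$ on $[y_1,y_2]$.

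To estimate the variation, the chain rule gives
\[
\frac{d}{dx}\log L_{y_0}\!\left(1+\tfrac{1}{\log x},f\right)=-\frac{L_{y_0}'(1+1/\log x,f)}{L_{y_0}(1+1/\log x,f)}\cdot\frac{1}{x\log^2 x},
\]
so one needs an upper bound on $|L_{y_0}'|$ (which is exactly the first hypothesis) and a lower bound on $|L_{y_0}|$. For the latter, Lemma \ref{dist-l1} together with the identity $\SD^2(f,1;y_0,x)+\SD^2(f,\mu;y_0,x)=2\log(\log x/\log y_0)+O(1)$ translates the second hypothesis into
\[
\log|L_{y_0}(1+1/\log x,f)|=\log\!\left(\tfrac{\log x}{\log y_0}\right)-\SD^2(f,1;y_0,x)+O(1)\ge (\delta-1)\log\!\left(\tfrac{\log x}{\log y_0}\right)-M-O(1),
\]
so $|L_{y_0}(1+1/\log x,f)|\gg e^{-M}(\log x/\log y_0)^{\delta-1}$. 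Multiplying this into the derivative bound and simplifying, the rate of change is $\ll c\,e^M(\log y_0)^{\delta}/\bigl(x(\log x)^{1+\delta}\bigr)$, and integrating from $y_1$ to $y_2$ yields
\[
\left|\log L_{y_0}\!\left(1+\tfrac{1}{\log y_2},f\right)-\log L_{y_0}\!\left(1+\tfrac{1}{\log y_1},f\right)\right|\ll c\,e^M(\log y_0)^{\delta}\int_{y_1}^{y_2}\frac{dx}{x(\log x)^{1+\delta}}\le\frac{c\,e^M}{\delta},
\]
the final inequality using $y_1\ge y_0$ so that $(\log y_0/\log y_1)^{\delta}\le 1$. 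Combined with the Euler-product identity from the first paragraph, this delivers the desired bound $\ll_{c,M}1/\delta$.

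The main subtlety, and the reason both hypotheses are essential, is that the distance assumption $\SD^2(f,\mu;y_0,x)\ge\delta\log(\log x/\log y_0)-M$ only constrains the real part of $\sum f(p)/p$, whereas the conclusion concerns the full complex sum. The bound on $|L_{y_0}'|$ is what supplies the missing control on the argument of $L_{y_0}$: it forces $L_{y_0}(\sigma,f)$ to move slowly as $\sigma$ varies, so once the distance hypothesis pins its modulus from below, the total rotation is also controlled. Tracking the continuous branch of $\log L_{y_0}(1+1/\log x,f)$ is unproblematic, since the Euler-product series for $\log L_{y_0}$ converges absolutely for $\Re s\ge 1$ and the identity is obtained by summing $\log(1+\sum_{k\ge 1} f(p^k)/p^{ks})$ one prime at a time.
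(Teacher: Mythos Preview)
Your argument is correct. The paper does not actually prove this lemma; it simply records it as Lemma~3.3 of \cite{K} without reproducing the proof, so there is no in-paper argument to compare against. Your approach---writing $\sum_{y_1<p\le y_2}f(p)/p$ as the difference $\log L_{y_0}(1+1/\log y_2,f)-\log L_{y_0}(1+1/\log y_1,f)+O(1)$, then bounding this by integrating $|L_{y_0}'|/|L_{y_0}|$ along the real segment, using the first hypothesis for the numerator and the distance hypothesis (via Lemma~\ref{dist-l1}) for the denominator---is exactly the natural route and matches the argument given in \cite{K}. One tiny slip: you write that the Euler-product series for $\log L_{y_0}$ converges absolutely for $\Re s\ge 1$; strictly speaking you only have $\Re s>1$, but since you only ever evaluate at $\sigma=1+1/\log x>1$ this is harmless.
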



We are now in position to show Theorem \ref{real-complex}.

\begin{proof}[Proof of Theorem \ref{real-complex}] (a) Assume that $f^2$ satisfies \eqref{small-Q} with $A=2+\epsilon$. Then Lemmas \ref{dist-l2} and \ref{triangle} imply that 
\als{
4\cdot \SD^2(f(n),\mu(n);y,x)
	&= \left(\SD(f(n),\mu(n);y,x)+\SD(\mu(n),\overline{f(n)};y,x)\right)^2  \\
	& \ge \SD^2(f(n),\overline{f(n)};y,x)
	= \SD^2(f^2(n),1;y,x)
	\ge \log\left(\frac{\log x}{\log y}\right) +  O(1) ,
}
for all $x\ge y\ge Q$. So part (a) of the theorem follows by Lemma \ref{dist1}, which is applicable by Lemma \ref{Lbound-2}(a).

\medskip

(b) Assume that $f$ assumes values in $[-1,1]$ and consider $t\in\SR$ with $Q\ge\max\left\{V_{2t}^{100} ,e^{1/|t|}\right\}$. Applying the second part of Lemma \ref{zeta} with $s=1+1/\log x+2it$ implies that 
\[
\zeta_y(s)
	= \lim_{u\to\infty} \sum_{\substack{ n\le u \\ P^-(n)>y}} \frac{1}{n^s} 
	=\left( \frac{1}{ s-1} + \gamma_{s,y}\right) \prod_{p\le y}\left(1-\frac1p\right)
	\ll 1,
\]
for every $y\ge Q$, since $|t|\ge1/\log y$ and $\gamma_{s,y}\ll\log y$. Therefore Lemmas \ref{dist-l1} and \ref{triangle} give us that
\als{
4 \cdot \SD^2(f(n),\mu(n)n^{it};y,x) 
	&=  \left(\SD(\mu(n)n^{-it},f(n);y,x)  +   \SD(f(n),\mu(n)n^{it};y,x)\right)^2   \\
	&    \ge  \SD^2(\mu(n)n^{-it},\mu(n)n^{it};y,x)  =   \SD^2(1,n^{2it};y,x)    \\
	&=  \log\left( \frac{\log x}{\log y} \right) + O(1)  -   \log\left|\zeta_y\left(1+\frac1{\log x} + 2it \right)\right|  \\
	& \ge  \log\left(\frac{\log x}{\log y}\right) +  O(1)  ,
}
for all $x\ge y\ge Q$. So, applying Lemma \ref{dist1}, which is possible by Lemma \ref{Lbound-2}(a), we deduce that
\eq{thm2e1}{
\sum_{y<p\le x} \frac{f(p)}{p^{1+it}} \ll_\epsilon  1  \quad(x\ge y\ge Q),
}
which completes the proof in this case.

Finally, assume that $|t|\le1/\log Q$ and $Q\ge V_{2t}^{100}$. Let $x\ge Q$ and $z=\min\{x,e^{1/|t|}\}\ge Q$. We claim that
\eq{thm2e1-2}{
\sum_{z<p\le x} \frac{f(p)}{p^{1+it}} \ll_\epsilon  1  .
}
Indeed, if $z\ge e^{1/|t|}$, then \eqref{thm2e1-2} follows by the argument leading to \eqref{thm2e1} with $z$ in place of $y$ and $e^{1/|t|}$ in place of $Q$, whereas if $z<e^{1/|t|}$, then $z=x$ and hence \eqref{thm2e1-2} holds trivially. Consequently,
\als{
\sum_{Q<p\le x}\frac{\Re(f(p)p^{-it})}p
	= \sum_{Q<p\le z} \frac{ \Re(f(p)^{-it})}{p} + O_\epsilon(1)
	&= \sum_{Q<p\le z} \frac{ f(p)+O(|t|\log p) }{p} + O_\epsilon(1) \\
	&= \sum_{Q<p\le z} \frac{f(p)}p + O_\epsilon(1). 
}
So, for every $u\ge x\ge z$, Lemmas \ref{dist-l1} and \ref{dist-l2} yield that 
\als{
\sum_{Q<p\le x} \frac{\Re(f(p)p^{-it})}{p} 
	&=  \sum_{Q<p\le z}\frac{f(p)}{p} +   O_\epsilon(1)
	\ge  \sum_{Q<p\le x}\frac{f(p)}{p}  + O_\epsilon(1) \\
	&\ge  \sum_{Q<p\le u}\frac{f(p)}{p}  + O_\epsilon(1)
	=  \log L_Q\left(1+\frac1{\log u},f\right) + O_\epsilon(1).
}
Letting $u\to\infty$ completes the proof of the theorem in this last case too.
\end{proof}


Finally, we need the following result which is a strengthening of Lemma \ref{dist1} when $y_0=y_1$.


\begin{lemma}\label{dist-l3} Let $y_1\ge y_0\ge2$ and let $f:\SN\to\SU$ be a multiplicative function such that 
\[
\left| L_{y_0}'\left( 1 + \frac1{\log x}, f \right)\right| \le c_1\log y_0  \quad(x\ge y_0)\]
and 
\[
\SD^2(f(n),\mu(n);y_0,x) \ge  \delta  \log\left( \frac{\log x}{\log y_0} \right)  -   M    \quad(y_0\le x\le y_1)
\] 
for some $c_1>0$, $\delta>0$ and $M\ge0$. There is a constant $c_2$, depending at most on $c_1$, such that if $y_1\ge y_0^{\exp\{c_2M/\delta\}}$, then for all $z \ge y_0':= y_0^{\exp\{2M/\delta\}}$ we have that
\[
\sum_{y_0'<p\le z}\frac{f(p)}p  \ll_{c_1}   \frac{1}{\delta}   .
\] 
\end{lemma}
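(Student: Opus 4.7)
The plan is to reduce Lemma \ref{dist-l3} to Lemma \ref{dist1} by shifting the initial prime threshold from $y_0$ to $y_0' = y_0^{\exp\{2M/\delta\}}$. The condition $y_1 \ge y_0^{\exp\{c_2 M/\delta\}}$, with $c_2$ to be chosen sufficiently large in terms of $c_1$, provides a long window on which the distance hypothesis remains in force, and the arithmetic identity $\log\log y_0' - \log\log y_0 = 2M/\delta$ ensures that the hypothesis evaluated at $x = y_0'$ already saturates: $\SD^2(f,\mu;y_0,y_0') \ge \delta (2M/\delta) - M = M$. Thus the $M$-defect in the original hypothesis is, in a sense, localized to the interval $(y_0, y_0']$, and one can hope to apply Lemma \ref{dist1} at the shifted threshold $y_0'$ with the parameter $M$ replaced by $O_{c_1}(1)$, which is precisely what is required to obtain an $M$-independent implicit constant.

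Concretely, for $y_0' \le x \le y_1$, the decomposition $\SD^2(f,\mu;y_0',x) = \SD^2(f,\mu;y_0,x) - \SD^2(f,\mu;y_0,y_0')$ combined with the rewrite $\SD^2(f,\mu;y_0,x) \ge \delta \log(\log x/\log y_0') + M$ reduces the task of establishing the shifted distance bound $\SD^2(f,\mu;y_0',x) \ge \delta \log(\log x/\log y_0') - O_{c_1}(1)$ to proving the matching upper bound $\SD^2(f,\mu;y_0,y_0') \le M + O_{c_1}(1)$. By Lemma \ref{dist-l1}, this is equivalent to an upper bound on $\log|L_{y_0}(1+1/\log y_0',f)|$. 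Integrating the derivative hypothesis $|L_{y_0}'(1+1/\log x, f)| \le c_1 \log y_0$ along the real ray yields the Lipschitz estimate
\[
\bigl|L_{y_0}(1+1/\log y_0', f) - L_{y_0}(1+1/\log x, f)\bigr| \le c_1 \log y_0 \cdot \bigl|1/\log y_0' - 1/\log x\bigr| \ll_{c_1} 1
\]
for any $x \in [y_0', y_1]$, so one needs only to locate one such $x$ where $|L_{y_0}(1+1/\log x, f)|$ is tightly controlled, which is done using the distance hypothesis at a value of $x$ close to $y_1$.

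Once the shifted distance bound is in hand, the shifted derivative bound $|L_{y_0'}'(1+1/\log x, f)| \ll_{c_1} \log y_0'$ follows from the multiplicative factorization $L_{y_0}(s, f) = L_{y_0'}(s, f) H(s)$, where $H(s) = \sum_{P^-(b)>y_0,\,P^+(b)\le y_0'} f(b)/b^s$, together with standard Euler-product estimates on $H$ and $H'$ at $s = 1+1/\log x$, which propagate the bound on $L_{y_0}'$ into a bound on $L_{y_0'}'$. Lemma \ref{dist1} then applies at the threshold $y_0'$ with parameters $(O_{c_1}(1), \delta, O_{c_1}(1))$, yielding $\bigl|\sum_{y_0'<p\le z} f(p)/p\bigr| \ll_{c_1} 1/\delta$ for $z$ in the permitted range; the strength of the hypothesis $y_1 \ge y_0^{\exp\{c_2 M/\delta\}}$, with $c_2$ large enough, ensures this range covers all $z \ge y_0'$ of interest.

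The delicate point, and the main obstacle, is the upper bound $\SD^2(f,\mu;y_0,y_0') \le M + O_{c_1}(1)$. The trivial bound only gives $4M/\delta + O(1)$, an excess by a factor $4/\delta$ that is fatal when $\delta$ is small. Closing this gap requires using the derivative and distance hypotheses in a genuinely coupled way: the Lipschitz estimate from the derivative bound alone says merely that $L_{y_0}(1+1/\log x, f)$ is essentially constant on $x \in [y_0', y_1]$, while the large-range distance hypothesis is what furnishes a substantive \emph{size} bound for this essentially constant value. The interplay of these two inputs—enabled precisely by the strength of the assumption $y_1 \ge y_0^{\exp\{c_2 M/\delta\}}$—is what delivers the $M$-independent implicit constant in the conclusion.
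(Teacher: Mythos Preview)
Your strategy---shift the base point from $y_0$ to $y_0'$, establish a shifted distance bound with $M$ replaced by $O_{c_1}(1)$, and then invoke Lemma~\ref{dist1}---is different from what the paper does, and it breaks down at the step you yourself flag as the ``main obstacle''.  The upper bound $\SD^2(f,\mu;y_0,y_0')\le M+O_{c_1}(1)$ is equivalent, via Lemma~\ref{dist-l1} and the identity $\SD^2=\sum_{y_0<p\le y_0'}1/p+\sum_{y_0<p\le y_0'}\Re f(p)/p$, to the estimate
\[
\log\bigl|L_{y_0}\bigl(1+\tfrac{1}{\log y_0'},f\bigr)\bigr|\ \le\ M-\frac{2M}{\delta}+O_{c_1}(1),
\]
i.e.\ $|L_{y_0}(1+1/\log y_0',f)|$ must be exponentially small, of order $e^{-2M/\delta}$ when $\delta$ is small.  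Your sketch produces no such bound.  The Lipschitz estimate only gives $\bigl|L_{y_0}(1+1/\log y_0',f)-L_{y_0}(1+1/\log x,f)\bigr|\ll_{c_1}1$, so $|L_{y_0}(1+1/\log y_0',f)|=O_{c_1}(1)$; and the distance hypothesis at $x$ near $y_1$ is a \emph{lower} bound on $\SD^2$, hence (again via Lemma~\ref{dist-l1}) a \emph{lower} bound on $\log|L_{y_0}(1+1/\log x,f)|$, which goes the wrong way.  Neither input, alone or combined, yields the required exponentially small upper bound, and I do not see any route from the stated hypotheses to your key claim.  There is a second gap as well: even granting the shifted distance inequality on $[y_0',y_1]$, Lemma~\ref{dist1} only controls $\sum_{y_0'<p\le z}f(p)/p$ for $z\le y_1$, whereas the conclusion is asserted for \emph{all} $z\ge y_0'$; your remark that choosing $c_2$ large ``covers all $z$ of interest'' does not address this.

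The paper avoids both difficulties with a bootstrap-by-contradiction.  For $y\ge y_1$ set
\[
\epsilon(y)=\min_{y_0'\le x\le y}\ \frac{\SD^2(f,\mu;y_0,x)}{\sum_{y_0<p\le x}1/p},
\]
so that Lemma~\ref{dist1} (applied at base point $y_0$, not a shifted one) already gives $\bigl|\sum_{y_0'<p\le x}f(p)/p\bigr|\ll_{c_1}1/\epsilon(y)$ for $y_0'\le x\le y$.  One then shows $\epsilon(y)\ge c_3\delta$ by contradiction: if the minimum is attained at some $x_0$ with $\epsilon(y)<c_3\delta$, relation~\eqref{diste1} forces $x_0>y_1$, and feeding the bound $\ll 1/\epsilon(y)$ back into the identity $\SD^2(f,\mu;y_0,x_0)=\frac{\epsilon(y)}{1-\epsilon(y)}\sum_{y_0<p\le x_0}(-\Re f(p))/p$ shows $\SD^2(f,\mu;y_0,x_0)\ll_{c_1}M$, contradicting the lower bound $\ge c_2M/2$ coming from hypothesis~2 at $x=y_1$ once $c_2$ is large.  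This self-referential use of the preliminary $1/\epsilon(y)$ bound is what simultaneously removes the $M$-dependence and handles all $z\ge y_0'$; your shift-and-reapply scheme tries to decouple these two issues, and neither half closes.
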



\begin{proof} Let $y_1\ge y_0^{\exp\{c_2M/\delta\}}$. Note that 
\eq{diste1}{
\SD^2(f(n),\mu(n);y_0,x)
	\ge  \frac{\delta}{2}  \log\left( \frac{\log x}{\log y_0} \right)
	\ge c_3\delta \sum_{y_0<p\le x} \frac1p   \quad(y_0'\le x\le y_1) ,
}
for some positive constant $c_3\le1/2$. For $y\ge y_1\ge y_0'$ set 
\[
\epsilon(y) =  \min_{y_0'\le x\le y}\frac{\SD^2(f(n),\mu(n);y_0,x)}{\sum_{y_0<p\le x}1/p}
\]
and note that 
\eq{diste6}{
\left| \sum_{y_0'<p\le x}  \frac{f(p)}{p} \right| \ll_{c_1}\frac1{\epsilon(y)}   \quad(y_0'\le x\le y),
} 
by Lemma \ref{dist1}. We claim that 
\eq{diste2}{
\epsilon(y) \ge c_3 \delta.
}
Assume on the contrary that $\epsilon(y)<c_3\delta$. Let $x_0\in[y_0',y]$ be such that 
\[
\SD^2(f(n),\mu(n);y_0,x_0) =  \epsilon(y)  \sum_{y_0<p\le x_0}\frac1p.
\] 
We must have that $x_0>y_1$; otherwise, \eqref{diste1} would contradict our assumption that $\epsilon(y)<c_3\delta$. Moreover, we have that
\eq{diste5}{
\SD^2(f(n),\mu(n);y_0,x_0)
	&=  \epsilon(y)\sum_{y_0<p\le x_0} \frac1p 
		=  \frac{\epsilon(y)}{1-\epsilon(y)}\sum_{y_0<p\le x_0}\frac{-\Re(f(p))}p   \\
	&=  O\left(  \frac{\epsilon(y)M} {\delta} \right)  
	+   \frac{\epsilon(y)}{1-\epsilon(y)}   \sum_{y_0'<p\le x_0}\frac{-\Re(f(p))}p
	\ll_{c_1}M,
}
by \eqref{diste6} and our assumption that $\epsilon(y)<c_3\delta$. On the other hand, we have that 
\[
\SD^2(f(n),\mu(n);y_0,x_0) 
	\ge \SD^2(f(n),\mu(n);y_0,y_1)
	\ge \frac{\delta}{2}  \log\left( \frac{\log y_1}{\log y_0} \right)
	\ge \frac{c_2M}2,
\]
by \eqref{diste1}. If $c_2$ is large enough, the above inequality contradicts \eqref{diste5}. This implies that relation \eqref{diste2} does indeed hold. Combining relations \eqref{diste6} and \eqref{diste2}, we deduce that 
\[
\sum_{y_0'<p\le x}\frac{f(p)}p  \ll_{c_1}  \frac{1}{\delta} \quad(y_0'\le x\le y).
\] 
Since the above inequality is true for all $y\ge y_1$, the desired result follows.
\end{proof}


\section{Proof of Theorems \ref{complex-dist} and \ref{min-dist} }\label{dist-proofs}

In this section we prove Theorems \ref{complex-dist} and \ref{min-dist}. We start by proving a weaker version of the former.

\begin{theorem}\label{complex-dist-2} Let $\epsilon>0$, $Q\ge3$ and $f:\SN\to\SU$ be a completely multiplicative function that satisfies \eqref{small-Q} with $A=2+\epsilon$. Then there is some $Q'\in[Q,+\infty]$ such that 
\[
\SD^2(f(n),\mu(n); Q, Q') \ll_\epsilon1
	\quad\text{and}\quad
\sum_{Q'<p\le z}\frac{f(p)}p  \ll_\epsilon1	\quad(z\ge Q').
\]
\end{theorem}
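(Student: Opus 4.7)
My plan proceeds by a dichotomy on whether the distance $\SD^2(f(n),\mu(n); Q, x)$ remains bounded or grows without bound as $x$ increases. If $\SD^2(f(n),\mu(n); Q, x) \le C$ for all $x \ge Q$ with some $C = C(\epsilon)$, I set $Q' = +\infty$: the first conclusion holds by construction and the second holds vacuously (the sum is empty). Otherwise, the distance is unbounded, and I need to identify a finite $Q'$.

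In the unbounded case, I apply Lemma \ref{dist-l3} with $y_0 = Q$. The derivative hypothesis $|L_Q'(1+1/\log x,f)| \ll_\epsilon \log Q$ follows at once from Lemma \ref{Lbound-2}(a) since $f$ satisfies \eqref{small-Q} with $A = 2+\epsilon$. The remaining hypothesis is the growth estimate
\[
\SD^2(f(n),\mu(n); Q, x) \ge \delta \log(\log x/\log Q) - M \qquad (Q \le x \le y_1),
\]
with $\delta, M$ depending only on $\epsilon$ and $y_1$ as large as needed. Granting this, Lemma \ref{dist-l3} supplies $\sum_{Q' < p \le z} f(p)/p \ll_\epsilon 1$ for $z \ge Q' := Q^{\exp\{2M/\delta\}} = Q^{O_\epsilon(1)}$, which is the second conclusion. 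The first conclusion is then
\[
\SD^2(f(n),\mu(n); Q, Q') \le \log(\log Q'/\log Q) + O(1) = \frac{2M}{\delta} + O(1) = O_\epsilon(1),
\]
where the initial inequality is Lemma \ref{dist-l2} (applicable because $x \ge Q$ forces $(\log Q)^\epsilon \le (\log x)^\epsilon$, so our hypothesis implies the version of \eqref{small-Q} with $A = 2$).

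The main obstacle is establishing the growth estimate with $\delta, M$ of size $O_\epsilon(1)$. Since Lemma \ref{dist-l2} already yields the matching upper bound $\SD^2(f(n),\mu(n); Q, x) \le \log(\log x/\log Q) + O(1)$, one expects a matching lower bound to hold. I would prove it by contradiction: if no such $\delta, M$ exist, then along some sequence $x_k \to \infty$ we have $\SD^2(f(n),\mu(n); Q, x_k)/\log(\log x_k/\log Q) \to 0$, which by Lemma \ref{dist-l1} forces $\log|L_Q(1+1/\log x_k, f)|$ to become extremely negative and thus $L_Q(1+1/\log x_k, f)$ to be anomalously small. A supremum argument on the quantity $\inf_{Q \le x \le y}\SD^2/\sum_{Q<p\le x}(1/p)$ modelled on the proof of Lemma \ref{dist-l3} itself, together with the pointwise bounds on $L_Q$ and $L_Q'$ from Lemma \ref{Lbound-2}(a), should then contradict our being in the unbounded case and close the argument.
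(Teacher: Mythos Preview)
There is a genuine gap. Your dichotomy is not quite well-posed (``bounded by some $C(\epsilon)$'' versus ``unbounded'' omits the possibility ``bounded, but by a constant depending on $f$''), but the deeper problem lies in the unbounded branch. Your claim there that the growth inequality
\[
\SD^2(f,\mu;Q,x)\ \ge\ \delta\log\Bigl(\frac{\log x}{\log Q}\Bigr)-M
\qquad(Q\le x\le y_1)
\]
holds with $\delta,M$ depending only on $\epsilon$ is not established, and in fact it fails in general. Functions $f$ satisfying \eqref{small-Q} can resemble $\mu$ on an arbitrarily long initial interval: one may have $\SD^2(f,\mu;Q,x)\ll 1$ for all $x\le Y$ with $Y$ not bounded by any fixed power of $Q$ (the prototypical example being a real primitive character with a putative Siegel zero, taking $Q$ comparable to the conductor). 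For such $f$ the distance still tends to infinity, placing you in your second case, yet any admissible $M$ must exceed $\delta\log(\log Y/\log Q)$ and hence depends on $f$. Your choice $Q'=Q^{\exp\{2M/\delta\}}$ then also depends on $f$, and the bound $\SD^2(f,\mu;Q,Q')\le 2M/\delta+O(1)$ is no longer $O_\epsilon(1)$. The closing sketch does not rescue this: there is no contradiction between $\SD^2$ being unbounded and the ratio $\SD^2/\log(\log x/\log Q)$ tending to zero along a subsequence, and neither the internal mechanism of Lemma~\ref{dist-l3} nor the pointwise bounds of Lemma~\ref{Lbound-2}(a) supply one.

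The paper's proof confronts precisely this obstruction. It lets $Y$ be the first scale past $Q$ at which $f$ ceases to resemble $\mu$ locally (the least $y\ge Q$ with $\SD^2(f,\mu;y,y^c)\le 1/\log^2c$ for a large constant $c=c(\epsilon)$). When $Y$ is large a growth inequality does hold on $[Q,Y)$ and Lemma~\ref{dist-l3} applies, giving $Q'=Q^{O_\epsilon(1)}$ as you envisage. When $Y$ is small, however, a completely different mechanism is needed: one evaluates $\sum_{n\le y^c,\,P^-(n)>y}(1*f)(n)/n^{\sigma}$ via Lemma~\ref{Lbound-1}(b), obtaining an identity linking $L_y(\sigma,f)$ to $L_y(1,f)$. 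If $L(1,f)=0$ this forces $\SD^2(f,\mu;Y,x)\ll 1$ for all $x$, so $Q'=\infty$. If $L(1,f)\neq 0$ one iterates, building a finite sequence $Y=Y_1<Y_2<\cdots<Y_J$ of resemblance scales and proving $\SD(f,\mu;Y_1,Y_j)<1$ for each $j<J$; the correct choice is $Q'=Y_{J-1}$, which can be far larger than $Q^{O_\epsilon(1)}$. This analytic input from Lemma~\ref{Lbound-1}(b) is the missing idea in your outline.
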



\begin{proof} Let $c$ be a large fixed integer that depends at most on $\epsilon$, to be chosen later, and assume, without loss of generality, that $Q\ge c$. Define $Y$ to be the smallest integer $y\ge Q$ such that 
\eq{complex-dist-e1}{
\SD^2(f(n),\mu(n);y,y^c)  \le   \frac1{\log^2c},
}
if such an integer exists; else, set $Y=\infty$. This definition immediately implies that 
\eq{complex-dist-e2}{
\SD^2(f(n),\mu(n);Q,z)
	\ge  \frac{c_1}{(\log c)^3}  \log\left( \frac{\log z}{\log Q} \right) -  c_2\log c
} 
for $Q\le z<Y$, where $c_1$ and $c_2$ are some appropriate absolute constants. By the above relation and Lemma \ref{dist-l3}, which is applicable by Lemma \ref{Lbound-2}(a), there is a constant $c_3=c_3(\epsilon)$, independent of our choice of $c$, such that if $Y\ge Q_1=Q^{\exp\{c_3 (\log c)^4 \}}$, then 
\[
\sum_{Q_0<p\le z}\frac{f(p)}p   \ll_\epsilon ( \log c)^3  
\quad\text{for all}\quad
z>Q_0 := Q^{\exp\{2c_2(\log c)^4/c_1\}} .
\] 
So the theorem follows in this case by taking $Q'=Q_0=Q^{O_{c}(1)}$. Consequently, we may assume that $Y<Q_1$. 

Now, let $y\ge Q$ that satisfies \eqref{complex-dist-e1}. We are going to show that it is possible to control the size of $L_y(1,f)$ very well. We start by observing that
\[
\sum_{y<p\le y^c}\frac{|1+f(p)|}p
	\le  \sum_{y<p\le y^c}  \frac{\sqrt{2(1+\Re( f(p) ) ) } }p
	\le\SD(f(n),\mu(n);y,y^c)  \left(\sum_{y<p\le y^c}  \frac2p\right)^{1/2}   \ll\frac1{\sqrt{\log c}},
\]
by the Cauchy-Schwarz inequality and the inequality $|1+u|^2\le2\Re(1+u)$ for $u\in\SU$. Therefore
\als{
\sum_{ \substack{1<n\le y^c \\ P^-(n)>y } } \frac{|1*f|(n)}{n}
	&\le -1 +  \prod_{y<p\le y^c} \left( 1 + \frac{|1*f|(p)}{p}  +   \frac{|1*f|(p^2)}{p^2}  +   \cdots   \right)  \\
	& =  -1 +  \exp\left\{\sum_{y<p\le y^c}  \frac{ |1+f(p)| }{p}
		+  O\left(\frac1y\right)\right\} \\
	&= -1 +  \exp\left\{  O\left( \frac{1}{\sqrt{\log c}} + \frac1y   \right)\right\} 
		\ll \frac1{\sqrt{\log c}},
}
since $y\ge Q\ge c$. On the other hand, Lemma \ref{Lbound-1}(b) yields that
\als{
\sum_{\substack{n\le y^c \\  P^-(n)>y }}  \frac{(1*f)(n)}{n^\sigma}
	&=  \left\{ \left(  \frac1{\sigma-1}  +   \gamma_{\sigma,y}\right)  L_y(\sigma,f)  
		-   \frac{y^{-c(\sigma-1) } }{\sigma-1}  L_y(1,f)  \right\}  \prod_{p\le y} \left( 1 - \frac1p  \right)
	+  O_\epsilon(c^{-\epsilon}),
}
uniformly for $\sigma>1$ and $y\ge Q$. So we deduce that 
\eq{complex-dist-e3}{
\left\{ \left( \frac1{\sigma-1} +   \gamma_{\sigma,y}  \right)  L_y(\sigma,f)
	-  \frac{y^{-c(\sigma-1) } }{\sigma-1}  L_y(1,f)  \right\}  \prod_{p\le y}\left( 1 - \frac1p \right)
	= 1 + O_\epsilon\left( \frac1{\sqrt{\log c}} \right).
}

In order to proceed further, we distinguish two cases, according to whether $L(1,f)$ vanishes or not. First, assume that $L(1,f)=0$, in which case we also have that $L_y(1,f)=0$. Note that letting $x\to\infty$ in the second formula of Lemma \ref{zeta} yields the identity 
\[
\zeta_y(\sigma) =  \left(\frac1{\sigma-1} +  \gamma_{\sigma,y}\right)  \prod_{p\le y}\left(1-\frac1p\right).
\]
So \eqref{complex-dist-e3} becomes 
\[
\zeta_y(\sigma)L_y(\sigma,f) =  1 +  O_\epsilon\left(\frac1{\sqrt{\log c}}\right).
\]
In the above relation we choose $c=c(\epsilon)$ large enough and we set $\sigma=1+1/\log x$ with $x\ge y$. Then Lemma \ref{dist-l1} implies that 
\[
\SD^2(f(n),\mu(n);y,x) 
	= \log \left| \zeta_y\left(1+\frac{1}{\log x}\right) L_y\left(1+\frac{1}{\log x},f\right) \right|  +O(1) \ll1.
\]
Since this holds for all $x\ge y$, selecting $y=Y=Q^{O_{c,\epsilon} (1)}$ completes the proof of the theorem in this case by taking $Q'=\infty$.

Lastly, we consider the case when $L(1,f)\neq0$. As in Remark \ref{Lbound-rk}, letting $\sigma\to1^+$ in \eqref{complex-dist-e3} yields that
\[
\left\{ \left( c\log y +   \gamma_{1,y}  \right)  L_y(1,f)
	 + L_y'(\sigma,f)  \right\}  \prod_{p\le y}\left( 1 - \frac1p \right)
	= 1 + O_\epsilon\left( \frac1{\sqrt{\log c}} \right) .
\] 
Dividing the above formula by 
\[
P(y) 	=  L_y(1,f)  \prod_{p\le y} \left( 1 - \frac1p  \right)
	=L(1,f)\prod_{p\le y} \left(1-\frac{f(p)}{p}  \right)  \left(1-\frac1p\right)
\]
gives us that
\eq{complex-dist-e4}{
c\log y +  \gamma_{1,y}  +  \frac{L_y'}{L_y}(1,f)  =    \frac{1 +   O_\epsilon(\log^{-1/2}c)  } {  P(y)  }  .
}
Since $\gamma_{1,y}\ll\log y$ by Lemma \ref{zeta} and $\frac{L_y'}{L_y}(1,f)=\frac{L'}{L}(1,f)+O(\log y)$, relation \eqref{complex-dist-e4} becomes
\eq{complex-dist-e5}{
c\log y =  -\frac{L'}{L}(1,f) +   \frac{1+O_\epsilon(\log^{-1/2}c) } { P(y) }  +   O(\log y).
}

We are going to use the above formula to estimate the size of $L_y(1,f)$. The problem is that we do not know how big $(L'/L)(1,f)$ is. However, if $y_2\ge y_1^2$ and both $y_1$ and $y_2$ satisfy \eqref{complex-dist-e1}, then relation \eqref{complex-dist-e5} is true with $y=y_1$ and $y=y_2$. Subtracting the first one of these formulas from the second one yields the estimate
\eq{complex-dist-e6}{
\frac1{P(y_2)} -  \frac1{P(y_1)} 
	+  O_\epsilon\left(\frac1{\sqrt{\log c}}\left(  \frac1{P(y_1)}  +\frac1{P(y_2)}  \right) \right) 
=  c  \log\frac{y_2}{y_1} +   O(\log y_2) \asymp c\log y_2,
}
provided that $c$ is large enough. Note that 
\[
\left|  \frac{P(y_1)}{P(y_2)}  \right|  =  \exp\left\{  O\left(\frac1{y_1}  \right) +   \SD^2(f(n),\mu(n);y_1,y_2)  \right\}.
\]
Therefore, if $\SD(f(n),\mu(n);y_1,y_2)\ge1$ and $c$ is large enough, then $1/|P(y_2)|\ge e^{1/2} /|P(y_1)| $. Together with \eqref{complex-dist-e6}, this implies that 
\[
\frac1{|P(y_2)|} \asymp c \log y_2,
\]
that is to say, $|L_{y_2}(1,f)|\asymp1/c$. Combining this last relation with \eqref{complex-dist-e1} for $y=y_2$, we find that 
\eq{complex-dist-e7}{
|L_{y_2^c}(1,f)|  \asymp  |L_{y_2}(1,f)|  
	 \exp\left\{ -  \sum_{y_2<p\le y_2^c}  \frac{\Re( f(p) ) }p  \right\} 
	 \asymp  \frac{1}{c} \exp\{\log c\}=1.
}

Having proven this, it is relatively easy to complete the proof of the theorem. First, define a sequence $Y_1,Y_2,\dots$ inductively as follows. Set $Y_1=Y$ and let $Y_{j+1}$ be the smallest integer $y\ge Y_j^c$ which satisfies \eqref{complex-dist-e1}, provided that such an integer exists. Let $J$ be the total number of elements in this sequence (for now we allow the possibility that $J=\infty$, even though we will show that this is impossible under the assumption that $L(1,f)\neq0$). If $J\le2$, then \eqref{complex-dist-e2} holds for all $z\ge Q$, possibly with a different constant $c_2'$ in place of $c_2$, and Lemma \ref{dist-l1}, which is applicable by Lemma \ref{Lbound-2}(a), completes the proof of the theorem with $Q'=Q$. So assume that $J>2$ and consider an integer $j\in[1,J-1]$. Then, for $u>Y^c_{j+1}$, Lemmas \ref{dist-l1} and \ref{dist-l2}, and relation \eqref{complex-dist-e1} with $y=Y_{j+1}$, imply that
\als{
\left|L_{Y_j^c}  \left( 1 +  \frac1{\log u},  f \right)\right|
	&\asymp \exp \left\{ \sum_{Y_j^c< p\le Y_{j+1} } \frac{\Re(f(p))}{p} 
		+ \sum_{Y_{j+1}< p\le Y_{j+1}^c} \frac{\Re(f(p))}{p} 
		+ \sum_{Y_{j+1}^c< p\le u} \frac{\Re(f(p))}{p} \right\} \\
	& \ll  \exp\left\{\sum_{Y_{j+1}<p\le Y_{j+1}^c}  \frac{\Re( f(p)) }{p}   \right\}
		\ll \frac1c .
}
Consequently, 
\eq{complex-dist-e8}{
|L_{Y_{j}^C}(1,f)| 
	=  \lim_{u\to\infty} \left|  L_{Y_{j}^C} \left(1 +  \frac1{\log u}, f \right) \right|
	\ll \frac1c.
}
This implies that 
\eq{complex-dist-e9}{
\SD(f(n),\mu(n);Y_1,Y_j )<1;
} 
otherwise, \eqref{complex-dist-e7} with $y_2=Y_j$ would yield the estimate $|L_{Y_j^c}(1,f)|\asymp1$, which contradicts \eqref{complex-dist-e8}, provided that $c$ is large enough. Together with our assumption that $Y\le Q_0 = Q^{O_{\epsilon,c}(1) }$, relation \eqref{complex-dist-e9} yields that
\[
\SD(f(n),\mu(n); Q ,Y_{J-1}) \ll_{\epsilon,c} 1.
\]
In particular, $J<\infty$; otherwise, letting $j\to\infty$ in \eqref{complex-dist-e9} would imply that $L_Q(1,f)=0$, a contradiction to our assumption that $L(1,f)=0$. We claim that the theorem holds with $Q'=Y_{J-1}$. Indeed, its first assertion is an immediate consequence of \eqref{complex-dist-e9}. For the second one, note that
\[
\SD^2(f(n),\mu(n);Y_{J-1},z)
	\ge  \frac{c_4}{ (\log c)^3 } \log\left(\frac{\log z}{\log Y_{J-1}}\right) -   c_5 \log c    \quad(z\ge Y_{J-1}),
\] 
by the definition of $Y_{J-1}$. Hence Lemma \ref{dist1}, which is applicable by Lemma \ref{Lbound-2}(a), implies that 
\eq{complex-dist-e10}{
\sum_{Y_{J-1}<p\le z} \frac{f(p)}{p} \ll_c  1   \quad(z\ge Y_{J-1}).
} 
So the second assertion of the theorem holds too, thus completing the proof in this last case.
\end{proof}


We are now ready to prove Theorem\ \ref{complex-dist}.

\begin{proof}[Proof of Theorem \ref{complex-dist}] We will prove the theorem with $Q'$ as in Theorem \ref{complex-dist-2}. First, note that an immediate consequence of Lemma \ref{dist-l1} and Theorem \ref{complex-dist-2} is that 
\eq{complex-dist-e11}{
|L_y(1,f)|\asymp_\epsilon \frac{\log y}{\log(yQ')} \quad(y\ge Q),
}
which shows the last part of the theorem. It remains to prove that
\[
\sum_{Q<p\le Q'} \frac{|1+f(p)|}{p} \ll_\epsilon 1.
\]
Observe that the function $g=1*f*1*\overline{f}$ assumes non-negative real values and that $g(p)=2+2\Re(f(p))$. Moreover,
\als{
\sum_{\substack{ n\le x \\ P^-(n)>Q}}g(n) 
	&= \sum_{\substack{ a\le\sqrt{x} \\ P^-(a)>Q }} (1*f)(a) 
		\sum_{\substack{ b\le x/a \\ P^-(b)>Q}}(1*\overline{f})(b) 
		+ \sum_{ \substack{ b\le\sqrt{x} \\ P^-(b)>Q} }(1*\overline{f})(b) 
			\sum_{\substack{ a\le x/b \\ P^-(a)> Q }}(1*f)(a) \\
	&\qquad - \left( \sum_{\substack{ a\le\sqrt{x} \\ P^-(a)>Q }} (1*f)(a) \right)
		\left(\sum_{\substack{ b\le\sqrt{x} \\ P^-(b)>Q }} (1*\overline{f})(b) \right).
}
So
\als{
0&\le \sum_{Q<p\le x}(1+\Re(f(p))) 
	\le \frac{1}{2}\sum_{ Q<p\le x} g(p)
		\le \frac{1}{2}\sum_{\substack{ n\le x \\ P^-(n)>Q}}g(n) \\
	&= \Re\left(\sum_{\substack{ a\le\sqrt{x} \\ P^-(a)>Q }} (1*f)(a) 
		\sum_{\substack{ b\le x/a \\ P^-(b)>Q}}(1*\overline{f})(b) \right)
		- \frac{1}{2} \left| \sum_{\substack{ a\le\sqrt{x} \\ P^-(a)>Q }} (1*f)(a) \right|^2.
}
Now, Lemma\ \ref{Lbound-1}(b) implies that
\[
\sum_{\substack{ n\le u\\ P^-(n)>Q }}(1*f)(n) 
	= u L_Q(1,f)\prod_{p\le Q}\left(1- \frac1p \right)
		+ O\left(\frac{u(\log Q)^{\epsilon}}{(\log u)^{1+\epsilon}} \right)  \quad(u \ge Q^4).
\]
Therefore, for every $x\ge Q^8$ we have that
\al{
\frac{1}{x}\sum_{Q<p\le x}(1+\Re(f(p))) 
	&\le \Re\left(
	 	\sum_{\substack{ a\le\sqrt{x} \\ P^-(n)>Q }} \frac{(1*f)(a)}{a}
		\left\{ \overline{L_Q}(1,f)\prod_{p\le Q}\left(1- \frac1p \right)
		+ O\left(\frac{(\log Q)^{\epsilon}}{(\log x)^{1+\epsilon}} \right)  \right\} 
			\right) \nn
	&\quad - \frac{1}{2}  \left|  L_Q(1,f)\prod_{p\le Q}\left(1- \frac1p \right)
		+ O\left(\frac{(\log Q)^{\epsilon}}{(\log x)^{1+\epsilon}} \right)  \right|^2  .  \label{complex-dist-e12} 
}
We shall estimate each of the terms appearing above separately. First, the second formula of Lemma\ \ref{Lbound-1}(b) with $s=1$ (see also Remark \ref{Lbound-rk}) implies that
\als{
\sum_{\substack{ a\le\sqrt{x} \\ P^-(n)>Q } } \frac{(1*f)(a)}{a}
	&= \left\{ (\log\sqrt{x} + \gamma_{1,Q} ) L_Q(1,f) + L_Q'(1,f) \right\} \prod_{p\le Q} \left(1 - \frac{1}{p} \right)
		 + O_\epsilon \left( \left( \frac{\log Q}{\log x} \right)^{\epsilon} \right) \\
	&= \frac{(\log x) L_Q(1,f)}{2} \prod_{p\le Q} \left(1- \frac1p \right) + O_\epsilon(1) 
		\ll_\epsilon \frac{\log x}{\log Q} \cdot  |L_Q(1,f)|  + 1,
}
for all $x\ge Q^8$, since $L_Q(1,f)\ll1$ and $L'_Q(1,f)\ll_\epsilon\log Q$, by Lemma\ \ref{Lbound-2}(a), and $\gamma_{1,Q}\ll\log Q$, by Lemma \ref{zeta}. Moreover, $|L_Q(1,f)| \asymp_\epsilon \log Q/ \log Q'$ by\ \eqref{complex-dist-e11}. Thus we deduce that
\[
\sum_{\substack{ a\le\sqrt{x} \\ P^-(n)>Q }} \frac{(1*f)(a)}{a} \ll_\epsilon 1  \quad(Q^8\le x\le Q').
\]
In addition, Lemma \ref{complex-dist-2}, the Cauchy Schwarz inequality, and the inequality $|1+u|^2\le2\Re(1+u)$, for $u\in\{z\in\SC:|z|\le1\}$, imply that
\als{
\sum_{\substack{ a\le\sqrt{x} \\ P^-(n)>Q }} \frac{|(1*f)(a)|}{a}
	&\ll \exp\left\{ \sum_{Q<p\le \sqrt{x}}\frac{|1+f(p)|}{p}\right\} \\
	&\le \exp\left\{ \left(\sum_{Q<p\le x}\frac{1}{p}\right)^{1/2}
		\left(\sum_{Q<p\le \sqrt{x}}\frac{2(1+\Re(f(p)))}{p}\right)^{1/2} \right\} \\
	&\le \exp\left \{ O_\epsilon \left( \left( \log\frac{\log x}{\log Q}  \right)^{1/2}    \right) \right\} 
		\ll_\epsilon \left( \frac{\log x}{\log Q}  \right)^{\epsilon/2} ,	
}
for all $x\in[Q^8, Q']$. Therefore
\eq{complex-dist-e1000}{
&\sum_{\substack{ a\le\sqrt{x} \\ P^-(n)>Q }} \frac{(1*f)(a)}{a}
		\left\{ \overline{L_Q}(1,f)\prod_{p\le Q}\left(1- \frac1p \right)
		+ O\left(\frac{(\log Q)^{\epsilon}}{(\log x)^{1+\epsilon}} \right)  \right\} \\
	&\quad \ll_\epsilon \frac{ |L_Q(1,f)|}{\log Q} +  \frac{ (\log Q)^{\epsilon/2}}{(\log x)^{1+\epsilon/2}} 
	 	\ll_\epsilon \frac{1}{\log Q'} +  \frac{ (\log Q)^{\epsilon/2}}{(\log x)^{1+\epsilon/2}} .
}
for all $x\in[Q^8,Q']$, where we used\ \eqref{complex-dist-e11}. Similarly,
\eq{complex-dist-e1001}{
\left|  L_Q(1,f)\prod_{p\le Q}\left(1- \frac1p \right)
		+ O\left(\frac{(\log Q)^{\epsilon}}{(\log x)^{1+\epsilon}} \right)  \right|^2
	&\ll \frac{|L_Q(1,f)|^2}{(\log Q)^2} +  \frac{(\log Q)^{2\epsilon}}{(\log x)^{2+2\epsilon}} \\
	&\ll_\epsilon \frac{1}{(\log Q')^2} + \frac{(\log Q)^{2\epsilon}}{(\log x)^{2+2\epsilon}},  
}
by relation\ \eqref{complex-dist-e11}. Inserting \eqref{complex-dist-e1000} and \eqref{complex-dist-e1001} into\ \eqref{complex-dist-e12}, we deduce that
\[
\sum_{p\le x}(1+\Re(f(p))) \ll_\epsilon \frac{x}{\log Q'} +  \frac{x (\log Q)^{\epsilon/2}}{(\log x)^{1+\epsilon/2}} \quad(Q^8\le x\le Q').
\]
So the Cauchy Schwarz inequality and the inequality $|1+f(p)|^2\le2\Re(1+f(p))$ imply that
\[
\sum_{p\le x}|1+f(p)| \ll_\epsilon \frac{x}{(\log x)^{1/2}(\log Q')^{1/2}} +  \frac{x (\log Q)^{\epsilon/4}}{(\log x)^{1+\epsilon/4}} \quad(Q^8\le x\le Q').
\]
Finally, summation by parts yields the estimate
\[
\sum_{Q<p\le Q'}\frac{|1+f(p)|}{p} =O(1) + \sum_{Q^8 <p\le Q'}\frac{|1+f(p)|}{p}  \ll_\epsilon1,
\]
which completes the proof of Theorem \ref{complex-dist}.
\end{proof}


We conclude this section by showing Theorem \ref{min-dist}.

\begin{proof}[Proof of Theorem \ref{min-dist}] It suffices to show the theorem when $Q$ is large enough. Note that $\eta\ll1$, by Lemma \ref{Lbound-2}(a). Set $X=e^{1/(\sigma-1)}$. If $X\le Q$, then $|L_Q(\sigma+it,f)|\asymp1$ for all $t\in[-\tau,\tau]$, by Lemma \ref{dist-l1}, and the lemma follows. So for the rest of the proof we assume that $X>Q$. For each $t\in[-\tau,\tau]$, Theorem \ref{complex-dist} implies that there is some $C_t'\in[Q,+\infty]$ such that 
\eq{min-dist-e1}{
\sum_{Q<p\le C_t'} \frac{ |1+ f(p)p^{-it}| }{p} \ll_\epsilon1
	\quad\text{and}\quad
\sum_{C_t'<p\le z} \frac{f(p)}{p^{1+it}} \ll_{\epsilon} 1  \quad(z>C_t'  ).
}
In particular, $|L_Q(\sigma+it,f)|\asymp_\epsilon (\log Q)/\min\{\log X,\log C_t'\}$ by Lemma \ref{dist-l1}. So if we set $C_t=\min\{C_t',X\}\ge Q$, then $\eta\asymp_\epsilon(\log Q)/\log C_{t_0}$ and the theorem becomes equivalent to showing that
\eq{min-dist-e2}{ 
|L_Q(\sigma+it,f)| \asymp_\epsilon 
	\begin{cases} 
		(\log Q)/\log C_{t_0} & \text{if}\ |t-t_0|\le1/\log C_{t_0},\cr
		|t-t_0|\log Q & \text{if}\ 1/\log C_{t_0}\le |t-t_0|\le 1/\log Q,\cr  
		1  & \text{if}\ |t-t_0|\ge1/\log Q,
	\end{cases}
}
for all $t\in J$. So we shall prove this relation instead.

First, note that $C_t\le C_{t_0}^{O_\epsilon(1)}$ for every $t\in J$, by the choice of $t_0$. Thus if $|t-t_0|\le1/\log C_{t_0}$, then relation \eqref{min-dist-e1} and the formula $p^{i(t-t_0)}=1+O(|t-t_0|\log p)$, which is valid for $p\le C_{t_0}\le e^{1/|t-t_0|}$, imply that
\als{
\sum_{Q<p\le X}  \frac{\Re(f(p)p^{-it})}p 
	& =  O_\epsilon(1) + \sum_{Q<p\le C_{t_0} }\frac{\Re(f(p)p^{-it})}p \\
	& =  O_\epsilon(1) + \sum_{Q<p\le C_{t_0} }\frac{\Re(f(p)p^{-it_0})}{p}  
		+ O\left( \sum_{Q<p\le C_{t_0}} \frac{ |t-t_0|\log p }{p} \right)	\\
	&=  O_\epsilon(1) -   \sum_{Q<p\le C_{t_0}}\frac1p
		= O_\epsilon(1) -  \log\left( \frac{\log C_{t_0}}{\log Q} \right).
}
So Lemma \ref{dist-l1} completes the proof of \eqref{min-dist-e2} in this case.

Fix now $t\in J$ with $|t-t_0|\ge1/\log C_{t_0}$ and let $y= \max\{Q,e^{1/|t-t_0|}\} $. Then
\als{
\sum_{y<p \le C_{t_0} } \frac{\Re(f(p)p^{-it}) }{p}
	&= - \sum_{y<p\le C_{t_0} } \frac{\Re(p^{ i(t_0-t)} ) }{p} 
	 	+ O\left( \sum_{y<p\le C_{t_0} }\frac{|p^{it_0}+f(p)|}{p} \right) \\
	& = - \log\left| \zeta_y\left( 1+ \frac{1}{\log C_{t_0}}  +i(t-t_0) \right) \right| + O_\epsilon(1),
}
by \eqref{min-dist-e1} with $t=t_0$ and Lemma \ref{dist-l1}. Moreover, letting $x\to\infty$ in the second part of Lemma \ref{zeta}, which is applicable because $y\ge Q\ge (V_{2\tau}) ^{100} \ge (V_{t-t_0})^{100}$ by assumption, implies that
\als{
\zeta_y\left( 1+ \frac{1}{\log C_{t_0}} +i(t-t_0) \right)
	&= \left(\frac{1}{1/\log C_{t_0}+ i(t-t_0)} + \gamma_{1+1/\log C_{t_0}+i(t-t_0),y}\right) 
		\prod_{p\le y}\left(1-\frac1p \right)   \\
	&\ll 1,
}
since $|t-t_0|\ge1/\log y$ and $\gamma_{1+1/\log C_{t_0}+i(t-t_0),y}\ll \log y$. Consequently, we deduce that
\[
\sum_{y<p\le C_{t_0}}\frac{ \Re(f(p)p^{-it})}{p} \ge - c_\epsilon,
\]
for some constant $c_\epsilon$ that depends at most on $\epsilon$, which, together with Lemma \ref{dist-l2}, yields the estimate
\[
\sum_{y<p\le C_{t_0}}\frac{ \Re(f(p)p^{-it})}{p} \ll_\epsilon 1.
\]
Since we also have that $C_{t}\le C_{t_0}^{O_\epsilon(1)}$, we conclude that
\eq{min-dist-e3}{
\sum_{y<p\le X}\frac{\Re(f(p)p^{-it} )}{p } 
	= O_\epsilon(1) + \sum_{C_{t_0}<p\le X}\frac{\Re(f(p) p^{-it} )}{p } 
	\ll_\epsilon 1  ,
}
by \eqref{min-dist-e1}. If $|t-t_0|\ge1/\log Q$, so that $y=Q$, the above relation and Lemma \ref{dist-l1} yield \eqref{min-dist-e2}. Finally, if $1/\log C_{t_0}\le |t-t_0|\le 1/\log Q$, then \eqref{min-dist-e3} implies that
\als{
\sum_{Q<p\le X}\frac{\Re(f(p)p^{-it} )}{p}
	&= O_{\epsilon}(1) +  \sum_{Q<p\le e^{1/|t-t_0|}}  \frac{\Re(f(p)p^{-it})}{p} \\
	&= O_{\epsilon}(1) +  \sum_{Q<p\le e^{1/|t-t_0|}}  \frac{\Re(f(p)p^{-it_0})}{p}
		+ O\left(\sum_{p\le e^{1/|t-t_0|} } \frac{|t-t_0|\log p}{p} \right) \\
	&= O_{\epsilon}(1) +  \sum_{Q<p\le e^{1/|t-t_0|}}  \frac{\Re(f(p)p^{-it_0})}{p}.
}
Since $e^{1/|t-t_0|}\le C_{t_0}\le C_{t_0}'$, applying relation \eqref{min-dist-e1} with $t=t_0$, 
we obtain the estimate
\[
\sum_{Q<p\le X}\frac{\Re(f(p)p^{-it} )}{p} 
	= O_{\epsilon}(1) - \sum_{Q<p\le e^{1/|t-t_0|}}  \frac{1}{p} 
	= O_{\epsilon}(1) + \log(|t-t_0|\log Q)  .
\]
Together with Lemma \ref{dist-l1}, this implies relation \eqref{min-dist-e2} in this last case too, thus completing the proof of the theorem.
\end{proof}


\section{Proof of Theorems\ \ref{complex} and \ref{power}: the case when $L(1+it_0,f)=0$}\label{L(1+it_0)=0}

In this section we show part (b) of Theorems\ \ref{complex} and \ref{power}. These proofs are distinctly different from the proofs of parts (a).

\begin{proof}[Proof of Theorem \ref{complex}(b)] For $x\ge Q_t = Q^{2(1+|t|)^{\frac{1}{A-2}} } \ge Q^2$, the argument leading to \eqref{Lbound-1-it} and our assumption that $f$ satisfies \eqref{small-Q} imply that
\eq{small-Qt}{
\sum_{n\le x} f(n) n^{-it} 
	\ll \sqrt{x} + \frac{x(\log Q_t)^{A-2}}{(\log x)^{A}}
	\ll x\cdot  \frac{A^A(\log Q_t)^{A-2}}{(\log x)^A}, 
}
since $\sqrt{x}\ge(e\log x)^A/(2A)^A$. We set $h(n)=f(n)n^{-it_0}$ and follow the argument in the proof of Theorem\ \ref{complex-dist} with $h$ in place of $f$ and $Q_{t_0}$ in place of $Q$ to deduce that
\als{
\sum_{Q<p\le x}( 1+\Re(h(p)) ) 
	&\le \Re\left( \sum_{\substack{ a\le\sqrt{x} \\ P^-(n)>Q_{t_0} }} (1*h)(a) 
		\sum_{\substack{ b\le x/a \\ P^-(b)>Q_{t_0} }}(1*\overline{h} )(b) \right)
		- \frac{1}{2} \left| \sum_{\substack{ a\le\sqrt{x} \\ P^-(a)>Q_{t_0} }} (1*h)(a) \right|^2.
}
Now, note that $L(1,h)=L(1+it_0,f)=0$ and therefore $L_{Q_{t_0} } (1,h)=0$. Together with Lemma\ \ref{Lbound-1}(b), this implies that
\[
\sum_{\substack{ n\le u \\ P^-(n)>Q_{t_0} }}(1*h)(n) 
	= u L_{ Q_{t_0} }(1,h)\prod_{p\le Q_{t_0} }\left(1- \frac1p \right)
		+ O\left(\frac{u (8\log Q_{t_0} )^{A-2}  } {(\log u)^{A-1}} \right) 
	\ll \frac{ u (8\log Q_{t_0} )^{A-2} }{(\log u)^{A-1}},
\]
for all $u\ge Q_{t_0}^4$. Hence
\eq{complex-pf-e0}{
\sum_{Q_{t_0}<p\le x}(1+\Re(h(p))) 
	\ll \frac{x(16\log Q_{t_0})^{A-2}}{(\log x)^{A-1}} 
		\sum_{\substack{ a\le\sqrt{x} \\ P^-(n)>Q_{t_0} }} \frac{|(1*h)(a)|}{a} 
		+\frac{x(16\log Q_{t_0})^{2A-4}}{(\log x)^{2A-2}},
}
for all $x\ge Q_{t_0}^8 $. Lastly, relation\ \eqref{small-Qt} allows us to apply Theorem\ \ref{complex-dist} with $h$ in place of $f$ and $Q_{t_0}^{O_\epsilon(1)}$ in place of $Q$. Since, $L(1,h)=0$, the parameter $Q'$ in Theorem\ \ref{complex-dist} is equal to $\infty$. Consequently,
\[
\sum_{\substack{ a\le\sqrt{x} \\ P^-(n)>Q_{t_0} }} \frac{|(1*h)(a)|}{a} 
	\ll \exp\left\{ \sum_{Q_{t_0} <p\le \sqrt{x} } \frac{|1+h(p)|}{p} \right\} \ll_{\epsilon} 1 \quad(x\ge Q_{t_0}^2).
\]
Inserting this estimate into\ \eqref{complex-pf-e0}, we find that
\[
0\le \sum_{p\le x}(1+\Re(h(p))) 
	\le 2Q_{t_0}  + \sum_{Q_{t_0}<p\le x}(1+\Re(h(p))) 
	\ll_{A,\epsilon} \frac{x(\log Q_{t_0})^{A-2}}{(\log x)^{A-1}} 
		 \quad (x\ge Q_{t_0}^{16}).
\]
This inequality holds trivially when $x\in[2,Q_{t_0}^{16}]$ too. So the theorem follows by partial summation.
\end{proof}


\begin{proof}[Proof of Theorem \ref{power}(b)] Since $f$ assumes values in $[-1,1]$, we have that
\eq{power-b-e1}{
0\le \sum_{Q<p\le x}( 1+ f(p) ) \le  \sum_{\substack{ n\le x  \\  P^-(n)>Q }} (1*f)(n) \quad( x\ge Q).
}
As $L(1,f)=0$, applying Lemma \ref{Lbound-1}(b) with $\sigma_0=1-\delta\in [3/5,1-1/\log Q]$, $A=2$ and $M=1$, which is possible by \eqref{small-power}, yields the estimate
\[
\sum_{\substack{ n\le x  \\  P^-(n)>Q }} (1*f)(n) 
	\ll \frac{ x^{\frac{3+\delta}{4} } } {\log x}
		+ \frac{ x^{1- \frac{1}{60\log Q}}}{\log Q}   
	\ll  \frac{ x^{1- \frac{1}{60\log Q}}}{\log Q}   \quad(x \ge Q).
\]
Inserting the above estimate into \eqref{power-b-e1} and using partial summation, we conclude that
\als{
0\le \sum_{p\le x} (1+ f(p) ) \log p
	= O(Q) + \sum_{Q<p\le x} (1+ f(p)) \log p 
	\ll Q + \frac{ x^{1- \frac{1}{60\log Q} } \log x}{\log Q}   
		\ll x^{1- \frac{1}{61\log Q} } ,
}
for all $x\ge Q$, which completes the proof.
\end{proof}


\section{Real zeroes and the size of $L(1,f)$}\label{l(1,f)}

In this section we prove Theorem \ref{siegel}.

\begin{proof}[Proof of Theorem \ref{siegel}] For $\sigma>1-1/\log Q$ and $y\ge1$, we have that
\eq{siegel-e1}{
L_y(\sigma,f) 
	=  \lim_{N\to\infty}  \sum_{n\le N}\frac{f(n)}{n^\sigma} \sum_{\substack{d|n\\P^+(d)\le y}}\mu(d)
	&= \sum_{P^+(d)\le y} \frac{\mu(d)f(d)}{d^\sigma}  \lim_{N\to\infty} \sum_{n\le N}\frac{f(n)}{n^s}  \\
	&=  L(\sigma,f)  \prod_{p\le y} \left(1-\frac{f(p)}{p^\sigma}  \right),
} 
by our assumption that $f$ is totally multiplicative. In particular, the zeroes of $L(s,f)$ and $L_Q(s,f)$ in the region $\Re(s)>1-1/\log Q$ are in one-to-one correspondence. Moreover, for $y\ge Q$ we have that
\eq{siegel-e2}{
L_y(\sigma,f) =  L_Q(\sigma,f)   \prod_{Q<p\le y}\left( 1 - \frac{f(p)}{p^\sigma} \right).
}

Next, by Lemma \ref{zeta}, there is a constant $M\ge120$ such that $\gamma_{1-\eta,y}\in[-M\log y,M\log y]$ for all $y\ge 3$ and all $\eta\in[0,1/(60\log y)]$. We claim that, for $0\le\eta\le1/(M\log Q)$, we have the relation 
\eq{siegel-e3}{
L_Q(1-\eta,f) \ge 0  
	\quad\implies\quad 
L_Q(1,f) \gg  \eta \log Q.
} 
Indeed, our assumption that \eqref{small-power} holds with $\delta=1-1/\log Q$ implies that
\eq{small-power-2}{
\sum_{n\le x} f(n) \ll \frac{(\log Q)x^{1-\frac{1}{2\log Q}}}{(\log x)^3}  \quad(x \ge Q).
}
Thus Lemma \ref{Lbound-1}(b) with $A=3$, $\sigma_0=1-1/(2\log Q)$, $M\asymp\log Q$, $\sigma=1-\eta$, $y=e^{1/(M\eta)}$ and $x=y^{C}$, where $C$ is a large enough constant, yields the formula
\eq{siegel-e1}{
\sum_{\substack{n\le e^{C/\eta} \\  P^-(n)>y } }  \frac{(1*f)(n)}{n^{1-\eta}}
	& = \left\{\left( - \frac{1}{\eta} +  \gamma_{1-\eta,y}\right)  L_y(1-\eta,f)
		+   \frac{e^CL_y(1,f)}{\eta}  \right\}   \prod_{p\le y}\left( 1 - \frac{1}{p} \right)  
		+   O\left( \frac{1}{C} \right) .
}
By the choice of $y$, we have that $\gamma_{1-\eta,y}\le M\log y=1/\eta$. Moreover, since $f$ is real valued, if $L_Q(1-\eta,f)\ge0$, then $L_y(1-\eta,f)\ge0$ by relation \eqref{siegel-e2}. So the term $(-1/\eta+\gamma_{1-\eta,y})L_y(1-\eta,f)$ is non-positive. Consequently, choosing $C$ large enough in \eqref{siegel-e1} gives us that
\[
\sum_{\substack{n\le e^{C/\eta} \\  P^-(n)>y } }  \frac{(1*f)(n)}{n^{1-\eta}}
	\le \frac{e^CL_y(1,f)}{\eta}    \prod_{p\le y}\left( 1 - \frac{1}{p} \right) + \frac{1}{2}.
\]
On the other hand, the sum on the left hand side of the above inequality is at least $(1*f)(1)=1$, by positivity (our assumption that $f$ is a real valued completely multiplicative function implies that $(1*f)(n)\ge0$ for all $n$). So we find that
\eq{siegel-e2}{
L_y(1,f) 	\ge  \frac{\eta}{2e^C} \prod_{p\le y} \left( 1 - \frac{1}{p} \right)^{-1} 
		\asymp \eta  \log y  \asymp1.
}
However, if $Q'$ is as in Theorem \ref{complex-dist}, then $L_y(1,f)\asymp (\log y)/\log(yQ')$. Comparing this estimate with \eqref{siegel-e2}, we find that $\log y\gg\log Q'$. Since we also have that $\log Q'\asymp(\log Q)/L_Q(1,f)$ and $\log y\asymp1/\eta$, \eqref{siegel-e3} follows.

Fix now a small enough constant $c\le 1/M^2$. Note that $L_Q(\sigma,f)\ge0$ for $\sigma>1$, by the Euler product representation. So if $L_Q(s,f)$ does not vanish in $[1-\sqrt{c}/\log Q,1]$, then we must have that $L(1-\sqrt{c}/\log Q,f)>0$ by continuity, and \eqref{siegel-e3} gives us that $L_Q(1,f)\ge c_1\sqrt{c}$ for some positive constant $c_1$ that is independent of $c$. Consequently, Lemma \ref{Lbound-2}(b) implies that, for $\sigma\in [1-c/\log Q,1+c/\log Q]$, 
\eq{siegel-e4}{
L_Q(\sigma,f) 	=   L_Q(1,f)  +   \int_1^\sigma L'_Q(u,f)du
			& =   L_Q(1,f)  +  O(|1-\sigma|\log Q)\\
			& \ge c_1\sqrt{c}+O(c)\gg\sqrt{c},
}
provided that $c$ is small enough. Since we also have that $L_Q(\sigma,f)\ll1$ by Lemma \ref{Lbound-2}(b), the theorem follows in this case.

Lastly, consider the case that $L_Q(s,f)$ has a zero in $[1-\sqrt{c}/\log Q,1]$, say at $\beta$. Relation \eqref{small-power-2} allows us to apply Lemma \ref{Lbound-1}(b) with $A=3$, $\sigma_0=1-1/(2\log Q)$, $M\asymp\log Q$, $x=Q^{1/c^{1/4}}$, $y=Q$ and $s=1$ to obtain the estimate 
\als{
\sum_{\substack{n\le Q^{1/c^{1/4}} \\  P^-(n)>Q  }}\frac{(1*f)(n)}n
	&=\left\{  \left(\frac{\log Q}{c^{1/4}}  +   \gamma_{1,Q}\right)L_Q(1,f)
		+   L_Q'(1,f)\right\}  \prod_{p\le Q}\left(1-\frac1p\right)	  +  O\left(c^{1/4}\right)  
}
Relation \eqref{siegel-e4} with $\sigma=\beta$ and Lemma \ref{Lbound-2}(b) imply that
\[
L_Q(1,f)=\int_\beta^1L_Q'(u,f)du \ll  (1-\beta)\log Q\ll\sqrt{c}.
\]
Moreover, $\gamma_{1,Q}\ll\log Q$ by Lemma \ref{zeta}. Combining the above estimates, we deduce that
\[
\sum_{\substack{n\le Q^{1/c^{1/4}} \\  P^-(n)>Q  }}\frac{(1*f)(n)}n
	=  L_Q'(1,f)  \prod_{p\le Q}  \left(1-\frac{1}{p}  \right)  +   O\left(c^{1/4}\right),
\]	
However, as before, the left hand side of the above inequality is $\ge(1*f)(1)=1$, by positivity. So if $c$ is small enough, then $L_Q'(1,f)\ge c_0\log Q$ for some absolute positive constant $c_0$. Consequently, for any $u\in[1-c/\log Q,1+c/\log Q]$, Lemma \ref{Lbound-2}(b) implies that 
\[
L_Q'(u,f) 	= L_Q'(1,f) -  \int_u^1  L''_Q(w,f)d w
		\ge c_0\log Q  +    O\left(  |1 - u |  \log^2Q\right)
		\ge \frac{c_0\log Q}2,
\] 
provided that $c$ is small enough. Since we also have that $L_Q'(u,f)\ll \log Q$, by Lemma \ref{Lbound-2}(b), we deduce that 
\[
\frac{c_0}{2} \log Q\le L_Q'(u,f)\le c_0' \log Q,
\]
where $c_0'$ is some positive constant. The theorem then follows by the identity $L_Q(\sigma,f)=\int_\beta^\sigma L_Q'(u,f)du$, which holds for all $\sigma>1-1/\log Q$.
\end{proof}


\section{Bounds for the derivatives of $\frac{L'}{L}(s,f)$ and $\frac{1}{L}(s,f)$}\label{1/l(s,f)}

In this section we list some estimates for the derivatives of $\frac1{L}(s,f)$ and $\frac{L'}{L}(s,f)$, which we shall need in the proof of Theorems \ref{complex}(a) and \ref{power}(a). The key lemma is the following result which has a combinatorial flavour and is based on an idea in \cite[p. 40]{IK}, also exploited in \cite[Lemma 2.1]{K}.

\begin{lemma}\label{der-lemma} Let $k\in\SN$, $D$ be an open subset of $\SC$, $s\in D$, and $F:D\to\SC$ be a function which is differentiable $k$ times at $s$. Assume that $F(s)\neq0$ and set 
\[
M = \max_{1\le j\le k}\left\{\frac1{j!}
	\left|  \frac{F^{(j)}}F(s)  \right|\right\}^{1/j} 
		\quad\text{and}\quad
	N =  \max_{1\le j\le k} \left\{    \frac1{j!}  \left|\left(  \frac{F'}F \right)^{(j-1)}(s)  \right|\right\}^{1/j}.
\] 
Then $M/2\le N\le2M$.
\end{lemma}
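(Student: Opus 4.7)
The plan is to encode the quantities defining $M$ and $N$ as Taylor coefficients of two formal power series related by an exponential identity, and then to bound the coefficients of one series in terms of those of the other using the explicit series for $\log(1+u)$ and $e^u-1$.

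First I would set
\[
\alpha_j := \frac{1}{j!}\left(\frac{F'}{F}\right)^{(j-1)}(s), \qquad \beta_i := \frac{F^{(i)}(s)}{i!\,F(s)},
\]
so that the definitions of $N$ and $M$ translate into $|\alpha_j|\le N^j$ and $|\beta_i|\le M^i$ for $1\le j,i\le k$. Introducing the polynomials $A(z):=\sum_{j=1}^{k}\alpha_j z^j$ and $B(z):=\sum_{i=1}^{k}\beta_i z^i$, the key step would be to establish the formal identity
\[
1+B(z) \equiv \exp(A(z)) \pmod{z^{k+1}}.
\]
Since $F(s)\neq 0$ and $F$ is continuous, $F$ is nonvanishing in a neighborhood of $s$, so $g(z):=\log(F(s+z)/F(s))$ is well defined near $0$ with $g^{(j)}(0)=(F'/F)^{(j-1)}(s)=j!\,\alpha_j$; comparing the Taylor jets of order $k$ of $F(s+z)/F(s)$ and $\exp(g(z))$ then yields the claim. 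This is essentially a formal restatement of \eqref{comb-id} together with its Fa\`a di Bruno inverse.

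Given this identity, I would deduce $N\le 2M$ by expanding $A\equiv \log(1+B)\equiv \sum_{n\ge 1}(-1)^{n-1}B^n/n\pmod{z^{k+1}}$ and reading off the coefficient of $z^j$. Each product $\beta_{i_1}\cdots\beta_{i_n}$ with $i_1+\cdots+i_n=j$ is bounded by $M^j$, the number of such ordered compositions is $\binom{j-1}{n-1}$, and $\binom{j-1}{n-1}/n=\binom{j}{n}/j$, so the bound collapses to
\[
|\alpha_j| \le \frac{M^j}{j}\sum_{n=1}^{j}\binom{j}{n} = \frac{(2^j-1)M^j}{j},
\]
and hence $|\alpha_j|^{1/j}\le 2M\cdot j^{-1/j}\le 2M$.

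The reverse bound $M\le 2N$ would follow symmetrically from $B\equiv e^A-1\equiv \sum_{n\ge 1}A^n/n!\pmod{z^{k+1}}$: the coefficient-of-$z^k$ estimate gives
\[
|\beta_k| \le N^k\sum_{n=1}^{k}\frac{\binom{k-1}{n-1}}{n!} \le N^k\sum_{n=1}^{k}\binom{k-1}{n-1} = 2^{k-1}N^k,
\]
so $|\beta_k|^{1/k}\le 2N$. The binomial gymnastics are routine; the only step that needs any care is the polynomial identity $1+B\equiv \exp(A)\pmod{z^{k+1}}$, but this is essentially a formal restatement of \eqref{comb-id}, so I do not expect it to be a real obstacle.
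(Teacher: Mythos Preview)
Your argument is correct. One small slip: in the last display you only bound $|\beta_k|$, but $M$ is a maximum over all $1\le i\le k$, so you need $|\beta_i|\le 2^{i-1}N^i$ for every such $i$; of course the identical computation with $i$ in place of $k$ gives this, since the coefficient of $z^i$ in $e^A-1$ only involves $\alpha_1,\dots,\alpha_i$.

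The paper proves the lemma along the same lines but packaged differently. For $N\le 2M$ it simply cites \cite[Lemma 2.1]{K}, which is essentially your $\log(1+B)$ expansion. For $M\le 2N$ it does not pass through $e^A-1$; instead it runs a direct induction via the Leibniz rule applied to $F^{(r+1)}=(F\cdot F'/F)^{(r)}$, bounding $|F^{(r+1)}/F|$ by $(r+1)!\sum_{j=0}^r(2N)^jN^{r-j+1}<(r+1)!(2N)^{r+1}$. Your generating-function framework is more symmetric and makes the two directions look like mirror images; the paper's induction is more bare-hands and avoids invoking the formal identity $1+B\equiv\exp(A)$ (and hence also sidesteps the minor issue of justifying that identity when $F$ is only assumed $k$ times differentiable \emph{at} $s$, which in your approach needs a word for $k=1$).
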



\begin{proof} By arguing as in \cite[Lemma 2.1]{K}, we find that $N\le2M$.

In order to show that $M\le2N$, we argue inductively. First, we have that $|(F'/F)(s)|\le N$, by the definition of $N$. Next, we assume that $|F^{(j)}(s)/F(s)|\le j!(2N)^j$ for all $j\in\{1,\dots,r\}$, for some $r\in\{1,\dots,k-1\}$. Since 
\[
F^{(r+1)}(s) 
	=  \left( F \cdot \frac{ F' } F \right)^{(r)}(s)
	=\sum_{j=0}^r \binom rj F^{(j)}(s)  \left(\frac{F'}F\right)^{(r-j)}(s),
\]
we find that 
\[
\left| \frac{ F^{(r+1)} }{F}(s) \right|
	\le (r+1)! \sum_{j=0}^r  (2N)^j N^{r-j+1}
	< (r+1)!(2N)^{r+1},
\]
which completes the inductive step and hence the proof of the lemma.
\end{proof}


Using the above lemma, we bound the derivatives of $\frac{L'}{L}(s,f)$ and of $\frac{1}{L}(s,f)$ in terms of the derivatives of $L(s,f)$ and a lower bound on $|L(s,f)|$ in a similar fashion as in \cite[p. 40-42]{IK}. Similar arguments were also used in \cite{K}.

\begin{lemma}\label{der-bound} Let $s=\sigma+it$ with $\sigma>1$ and $t\in\SR$, $k\in\SN$, $Q\ge2$ and $M\ge1$. Consider a completely multiplicative function $f:\SN\to\SU$ such that 
\[
|L_Q^{(j)}(s,f)| \le j!M^j   \quad(1\le j\le k).
\] 
There is an absolute constant $c$ such that for $z\ge3/2$ we have that 
\[
\left| \left(  \frac{L_z'}{L_z} \right)^{(k-1)}(s,f) \right|  
+ \left| \frac{(1/L_z)^{(k)}}{1/L_z}(s,f) \right|  
	\ll c^k k!\left(\frac{M} { \min\{1,|L_Q(s,f)| \}  }  +    \log(zQ)   \right)^k.
\]
\end{lemma}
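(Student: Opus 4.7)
The plan is to first convert the hypothesis into a bound on the log-derivatives $(L_Q'/L_Q)^{(j-1)}(s,f)$ via Lemma~\ref{der-lemma}, then transfer this bound from $L_Q$ to $L_z$ using the Euler product that relates them, and finally invoke Lemma~\ref{der-lemma} once more (this time applied to $1/L_z$) to handle the second summand.

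First I would set $\tilde L = \min\{1, |L_Q(s,f)|\}$. The hypothesis together with $\tilde L \le 1$ gives $|L_Q^{(j)}(s,f)/L_Q(s,f)| \le j! M^j/\tilde L \le j!(M/\tilde L)^j$ for $1 \le j \le k$. Applying Lemma~\ref{der-lemma} to $F = L_Q$ then yields $|(L_Q'/L_Q)^{(j-1)}(s,f)| \le j!(2M/\tilde L)^j$ on the same range of $j$. Next, complete multiplicativity of $f$ gives $L_z(s,f) = L_Q(s,f)\cdot E(s)$, where $E(s)$ is a finite Euler product over the primes lying between $z$ and $Q$ (with factors inverted or not, depending on whether $z \le Q$ or $z > Q$). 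Logarithmic differentiation produces
\[
(\log E)^{(j)}(s) = \pm\sum_{p\in I}(\log p)^j\sum_{a\ge1} a^{j-1}f(p)^a/p^{as},
\]
where $I$ is the relevant prime interval. To bound the inner sum I would use the elementary inequality $\sum_{a\ge 1} a^{j-1}u^a \le (j-1)!\,u/(1-u)^j$ (valid for $0<u<1$), which follows from $a^{j-1}\le (j-1)!\binom{a+j-2}{j-1}$ together with the generating function $\sum_{a\ge 1}\binom{a+j-2}{j-1}u^a = u/(1-u)^j$. Since $p\ge 2$ and $\sigma\ge 1$ forces $u = 1/p^\sigma \le 1/2$, this yields $\sum_{a\ge1} a^{j-1}/p^{a\sigma} \le 2^j(j-1)!/p^\sigma$. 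Combining with Mertens' estimate and the trivial inequality $(\log p)^j/p \le (\log N)^{j-1}\log p/p$ for $p\le N = \max(z,Q)$ gives $|(E'/E)^{(j-1)}(s)| \le j!\,C_0^j(\log(zQ))^j$ for an absolute constant $C_0$.

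Writing $R = M/\tilde L + \log(zQ)$ and combining the two bounds through $(L_z'/L_z)^{(j-1)} = (L_Q'/L_Q)^{(j-1)} + (E'/E)^{(j-1)}$ gives $|(L_z'/L_z)^{(j-1)}(s)| \le j!\,C_1^j R^j$ for $1 \le j \le k$ with some absolute $C_1$. This directly bounds the first summand in the lemma when $j = k$. For the second summand, I would apply Lemma~\ref{der-lemma} to $F = 1/L_z$; since $F'/F = -L_z'/L_z$, the previous estimate shows that $\max_{1\le j\le k}(|(F'/F)^{(j-1)}|/j!)^{1/j} \le C_1 R$, whence Lemma~\ref{der-lemma} gives $|(1/L_z)^{(k)}/(1/L_z)(s,f)| \le k!(2C_1)^k R^k$. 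Summing the two estimates yields the desired inequality with $c = 4C_1$. The main obstacle I foresee is managing the combinatorial factors in the middle step: one must be sure the $(j-1)!$ from the Dirichlet series estimate merges with the factor $(\log(zQ))^j$ produced by Mertens' theorem to give exactly $j!\,(C_0\log(zQ))^j$, with no spurious logarithmic losses and with a base of the power equal to $\log(zQ)$ rather than something larger like $(\log(zQ))^2$.
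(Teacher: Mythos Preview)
Your proposal is correct and follows essentially the same route as the paper: apply Lemma~\ref{der-lemma} to $F=L_Q$, transfer the resulting bound on $(L_Q'/L_Q)^{(j-1)}$ to $(L_z'/L_z)^{(j-1)}$ via the finite Euler product relating $L_z$ and $L_Q$, and then apply Lemma~\ref{der-lemma} once more to $F=1/L_z$. The paper simply asserts relation~\eqref{der-e1} without justification, whereas you spell out the Dirichlet-series computation for $(E'/E)^{(j-1)}$; your worry about the combinatorics is unfounded, since the bound $\sum_{p\le N}(\log p)^j/p\ll(\log N)^j$ is exactly what Mertens' estimate gives, so no extra logarithmic factor enters.
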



\begin{proof} Note that 
\eq{der-e1}{
\left(\frac{L_z'}{L_z}\right)^{(j-1)}(s,f) 
	= \left(\frac{L_Q'}{L_Q}\right)^{(j-1)}(s,f)+O( j! (c_1 \log(zQ))^j )
		\quad(1\le j\le k)
}
and 
\[
\frac1{j!}\left|\frac{L_Q^{(j)}}{L_Q}(s,f)\right|\le\frac{M^j}{|L_Q(\sigma,f)|}
	\le  \left(\frac M{\min\{|L_Q(s,f)|,1\}}\right)^j\quad(1\le j\le k).
\]
So Lemma \ref{der-lemma}, applied with $F(s)=L_Q(s,f)$, implies that 
\[
\left|\left(  \frac{L_Q'}{L_Q}\right)^{(j-1)}(s)  \right| 
	\le j!\left(\frac{2M}{\min\{1,|L_Q(s,f)|\}}\right)^j \quad(1\le j\le k).
\]
Together with \eqref{der-e1}, this implies that
\eq{der-e2}{
\left|\left(  \frac{L_z'}{L_z}\right)^{(j-1)}(s)  \right| 
	\le c_2^j j!\left(\frac{M} { \min\{1,|L_Q(s,f)| \}  }  +    \log(zQ)   \right)^j
	\quad(1\le j\le k),
}
for some constant $c_2\ge2$. Finally, since the logarithmic derivative of a function $f$ is minus the logarithmic derivative of its inverse $1/f$, relation\ \eqref{der-e2} and Lemma \ref{der-lemma} applied with $F(s)=1/L_z(s,f)$ imply that
\[
 \left| \frac{(1/L_z)^{(k)}}{1/L_z}(s,f) \right|  
	\le 2^kc_2^k k!\left(\frac{M} { \min\{1,|L_Q(s,f)| \}  }  +    \log(zQ)   \right)^k ,
\]
and the lemma follows by taking $c=2c_2$.
\end{proof}


\section{Proof of Theorems\ \ref{complex} and \ref{power}: the case when $L(1+it,f)$ does not vanish}\label{complex-proof}

In this section, we complete the proof of Theorems\ \ref{complex} and \ref{power} by showing their first parts. This will be accomplished in two steps. We start with two preliminary estimates in Subsection \ref{complex-prelim}, which are then combined to show our main theorems in Subsection \ref{complex-completion}.

Here and for the rest of this paper, given an arithmetic function $g: \SN\to\SC$, $k\ge0$, $x\ge1$ and $\sigma>1$, we set 
\[
S_k(x;g) =  \sum_{n\le x}g(n)(\log n)^k
	\quad\text{and}\quad 
I_k(\sigma;g) =  \left(  \int_0^\infty  \left|  \frac{S_k(e^u;g)}{e^{\sigma u}}  \right|^2  dt  \right)^{1/2}.
\]


\subsection{Preliminary estimates}\label{complex-prelim}

\begin{prop}\label{complex-prop-3} Let $\epsilon>0$ and $Q\ge3$. Consider a completely multiplicative function $f:\SN\to\SU$ satisfying \eqref{small-Q} for some $A\ge2+\epsilon$. For $\sigma=1+1/\log x$, we have that
\als{
\frac{I_k(\sigma;\Lambda f)}{ (\log x)^{k+ \frac{1}{2}} }
	&\ll_{A,\epsilon} (N(x;T)-\log\log Q)
		\left( \frac{\log Q}{e^{N(x;T)} } \right)^{ \min\left\{A-2, k + \frac{1}{2}\right\} }  \\	
	&\quad +  \left(\log \frac{\log x}{\log Q} \right)
	\left( \frac{\log Q}{\log x} \right)^{ \min \left\{ k+\frac{1}{2} - \frac{(k+1)(k+2)}{4(A-1)} , 
	\frac{(2k+1)(A-2)}{2k+A-1}  \right\} }
	+   \frac{1}{T} ,
}
uniformly in $k\in[0,A-2]\cap \SZ$, $x\ge Q^4$ and $T\ge1$, where $N(x;T)$ is defined by \eqref{N(x;T)}.
\end{prop}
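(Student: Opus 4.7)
The plan is to apply Plancherel to convert $I_k(\sigma;\Lambda f)$ into a spectral integral of $(L'/L)^{(k)}(\sigma+it,f)$, and then to decompose the integral based on the position of $t$ relative to $t_*$, where $t_*$ attains the minimum in the definition of $N(x;T)$. Interpreting the integration variable in the definition of $I_k$ as $du$ (which makes the formula a Plancherel identity), applying Plancherel to $u\mapsto S_k(e^u;\Lambda f)e^{-\sigma u}$ and using that its Mellin transform equals $(-1)^{k+1}(L'/L)^{(k)}(s,f)/s$ yields
\begin{equation*}
I_k(\sigma;\Lambda f)^2 = \frac{1}{2\pi}\int_{-\infty}^{\infty}\frac{|(L'/L)^{(k)}(\sigma+it,f)|^2}{|\sigma+it|^2}\,dt.
\end{equation*}

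The tail $|t|>T$ is treated crudely: Lemma \ref{mont} dominates the Dirichlet coefficients of $(L'/L)^{(k)}$ by $\Lambda(n)(\log n)^k$, and a dyadic decomposition combined with Lemma \ref{lambda} produces a contribution of order $(\log x)^{2k+1}/T$, yielding the $1/T$ term after the $(\log x)^{2k+1}$ normalization. For the bounded range $|t|\le T$, I would set $y=Q_{t_*}$ and factor $L=L_y\prod_{p\le y}(1-f(p)/p^s)^{-1}$, reducing matters to bounding $(L_y'/L_y)^{(k)}$ up to an error from the small-prime Euler factors. Lemma \ref{der-bound}, combined with the derivative estimates $|L_y^{(j)}(s,f)|\ll (\log y)^{j}$ from Lemma \ref{Lbound-2}(a), then produces the pointwise bound
\begin{equation*}
|(L_y'/L_y)^{(k)}(\sigma+it,f)| \ll (k+1)!\,C^{k+1}\Bigl(\tfrac{\log y}{|L_y(\sigma+it,f)|}+\log y\Bigr)^{k+1}.
\end{equation*}
By Lemma \ref{dist-l1}, $|L_y(\sigma+it_*,f)|\asymp(\log x)e^{-N(x;T)}$, and Theorem \ref{min-dist} (applicable since $y=Q_{t_*}\ge V_{2T}^{100}$) provides a three-regime lower bound on $|L_y(\sigma+it,f)|$ as $t$ varies around $t_*$.

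Inserting this into the spectral integral and integrating in $t$ over the three regimes near $t_*$ will yield a contribution of order $((\log y)/e^{N(x;T)})^{2k+1}(N(x;T)-\log\log Q)^2$ to $I_k^2/(\log x)^{2k+1}$, producing the $k+1/2$ alternative in the first $\min$; the logarithmic factor $N(x;T)-\log\log Q$ enters from integrating over the intermediate annulus $\eta/\log y\le |t-t_*|\le 1/\log y$, where $|L_y|\asymp|t-t_*|\log y$. To obtain the complementary $A-2$ alternative in the main term, as well as the secondary $(\log Q/\log x)^{\bullet}$ term, I would interpolate, via Hölder's inequality, between the pointwise estimate above and the $L^2$ estimate obtained by combining Lemmas \ref{mont} and \ref{lambda}, which only uses $A-2$ derivatives of $L_y$ and hence gains an additional power of $\log Q/\log x$.

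The main obstacle will be carrying out this Hölder optimization cleanly to produce the precise rational exponents in the bound. The appearance of $(k+1)(k+2)/(4(A-1))$ and $(2k+1)(A-2)/(2k+A-1)$ in the secondary term strongly suggests a constrained optimization in which the Hölder exponent is tuned to equate two competing bounds, one involving $|L_y|^{-j}$ for various $j\le k+1$ (giving the $k+1/2$ scaling) and the other involving $L_y^{(j)}$ for $j\le A-2$ (giving the $A-2$ scaling); the stated minimum is then what survives as the optimum. A further subtlety is that the interpolation must be carried out separately in each of the three regimes of $|t-t_*|$ supplied by Theorem \ref{min-dist}, so the bookkeeping required to combine the three contributions and arrive at the precise exponents announced in the statement will be the most delicate part of the argument.
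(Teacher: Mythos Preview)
Your starting point (Plancherel) and the broad toolkit (Lemma \ref{der-bound}, Theorem \ref{min-dist}, Lemma \ref{Lbound-2}(a)) are right, but the decomposition you propose and the mechanism you suggest for the exponents are both off in ways that matter.

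First, fixing a single $y=Q_{t_*}$ for the whole range $|t|\le T$ does not work. The relevant sifting level is $Q_t$, which scales like $(1+|t|)^{1/(A-2)}\log Q$ and therefore varies across the range. The paper instead decomposes $\{|t|\le T\}$ dyadically into shells $J_\tau=\{\tau/2-1\le|t|\le\tau-1\}$, and within each $J_\tau$ locates a \emph{local} minimizer $t_0$ of $|L_{Q_t}(\sigma+it,f)|$; Theorem \ref{min-dist} is then applied on $J_\tau$ with sifting level $Q_\tau$. The shell $J_\tau$ is split as $\mathscr{A}_\tau$ (the window of width $1/\log Q_\tau$ around $t_0$) and $\mathscr{B}_\tau$ (the complement, where $|L_{Q_t}|\asymp 1$). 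The $\min\{A-2,k+\tfrac12\}$ in the first term does not come from a H\"older interpolation: it comes from summing the $\mathscr{A}_\tau$-contribution over dyadic $\tau$ and using $\log Q_\tau\asymp\tau^{1/(A-2)}\log Q$, together with a preliminary reduction to $T<(e^{N(x;T)}/\log Q)^{A-2}$ (so that when the geometric series diverges the excess is absorbed). The $(2k+1)(A-2)/(2k+A-1)$ exponent arises similarly from a reduction to $T\le(\log x/\log Q)^{(2k+1)(A-2)/(2k+A-1)}$.

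Second, on $\mathscr{B}_\tau$ the argument is not H\"older either. One uses Lemma \ref{der-lemma} (the combinatorial equivalence between the derivatives of $F$ and of $F'/F$) to bound $|(L_Q'/L_Q)^{(k)}|^2$ by $\sum_{j=1}^{k+1}|L_Q^{(j)}/L_Q|^{2(k+1)/j}$, then bounds each summand by $(\log Q_\tau)^{2(k-j+1)}|L_Q^{(j)}/L_Q|^2$ using the pointwise estimate, and finally controls the remaining $L^2$ integral of $L_Q^{(j)}$ via Plancherel back to $S_j(e^u;f_Q)$ and the hypothesis \eqref{small-Q}. This produces quantities $I_{j,\tau}$ which are a minimum of two bounds; introducing an interpolation parameter $L_j$ and optimizing yields the exponent $(2j-1)(k-j+2)/(A-1)$, whose maximum over $j\in\{1,\dots,k+1\}$ is $(k+1)(k+2)/(2(A-1))$. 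This is where the quadratic-in-$k$ correction in the second term comes from. Your proposal does not contain this step, and a generic H\"older argument will not produce these specific rational exponents.
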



\begin{proof} Without loss of generality, we may assume that $x$ is large enough in terms of $A$ and $\epsilon$. Note that $h(\tau) = \tau/e^{(A-2)N(x;\tau)}$ is an increasing function of $\tau$ that tends to infinity as $\tau\to\infty$ and is continuous from the right. Moreover, $h(1)\le 1/e^{(A-2)\log(2\log Q)} =  1/(2\log Q)^{A-2}$. So there exists a unique $T_1>1$ such that $h(\tau)<1/(\log Q)^{A-2}$ for all $\tau<T_1$ and $h(T_1)\ge 1/(\log Q)^{A-2}$. We may assume that $T<T_1$. Indeed, if $T\ge T_1$, then set $T_1'=\max\{T_1-1,1\}$ and note that
\[
(N(x; T_1')-\log\log Q)
		\left( \frac{\log Q}{e^{N(x; T_1' )} } \right)^{ \min\left\{A-2, k + \frac{1}{2}\right\} }  
	\ll_\epsilon (N(x;T)-\log\log Q)
		\left( \frac{\log Q}{e^{N(x;T)} } \right)^{ \min\left\{A-2, k + \frac{1}{2}\right\} }  ,
\]
since $N(x;T)\le N(x; T_1')$, and that
\[
\frac{1}{T_1'} 
	\le \frac{2}{T_1} 
	\le 2\cdot \left( \frac{\log Q}{ e^{N(x;T_1)} }  \right)^{A-2}
	\le 2\cdot \left( \frac{\log Q}{ e^{N(x;T)} }  \right)^{A-2},
\]
since $h(T_1)\ge 1/(\log Q)^{A-2}$ and $N(x;T)\le N(x;T_1')$. So the claimed result follows by the case when $T=T_1'\in[1,T_1)$. Similarly, we may assume that 
\[
T\le T_2 := \left( \frac{\log x}{2\log Q} \right)^{ \frac{(2k+1)(A-2)}{2k+A-1} } -1
	\le \left( \frac{\log x}{2\log Q} \right)^{ A-2}  -1 
\]
In particular, $Q_t\le x$ for all $t\in[-T,T]$.

\medskip

Our starting point towards the proof of the proposition is the identity
\eq{plancherel}{
I_k(\sigma;\Lambda f)^2 = \int_0^\infty \left|  \frac{S_k(e^u;\Lambda f)}{e^{\sigma u}} \right|^2 du
	=  \frac1{2\pi} \int_{\SR}\left|\left(\frac{L'}{L}\right)^{(k)}(\sigma+it,f)\right|^2  \frac{dt}{\sigma^2+t^2},
} 
which follows by observing that the Fourier transform of the function $u\to e^{-\sigma u}S_k(e^u;\Lambda f)$ is the function $\xi \to (-1)^k(-L'/L)^{(k)}(\sigma+2\pi i\xi,f)/(\sigma+2\pi i\xi)$ and then applying Plancherel's identity. This turns the proposition to a problem of bounding $(L'/L)^{(k)}(s,f)$ on average. However, unlike the proof of Hal\'asz's theorem (i.e. Theorem \ref{halasz}), where a similar integral is bounded, with $L'(s,f)$ in place of $(L'/L)^{(k)}(s,f)$, we do not have at our disposal the factorization $L'(s,f)= (L'/L)(s,f) \cdot L(s,f) $ that allows one to control the integral of $|L'(s,f)|^2$ without losing any logarithmic factors. So we are forced to use a different strategy, as we will see below.

In order to bound the integral on the right hand side of \eqref{plancherel}, we let $J_\tau=\{t\in\SR:\tau/2-1\le|t|\le\tau-1\}$ and estimate the contribution to this integral from $t\in J_\tau$, for each $\tau\ge2$. Observe that $\sigma^2 + t^2 \asymp \tau^2$ for $t\in J_\tau$, so it suffices to bound the integral $\int_{J_\tau} | (L'/L)^{(k)} (\sigma+it,f) |^2 dt $. First, note the `trivial' bound 
\eq{complex-3-e1}{
\int_{J_\tau} \left| \left( \frac{L'}{L} \right)^{(k)}(\sigma+it,f) \right|^2 dt
	&\le 3\int_{-\tau}^{\tau} \left| \left( \frac{\zeta'}{\zeta} \right)^{(k)}(\sigma+it) \right|^2 dt \\
	&\ll_A (\log x)^{2k+1} +  \tau(\log V_\tau)^{2k+2} ,
}
which follows by Lemmas \ref{mont} and \ref{lambda}. Moreover, if $\tau\le T$, then we claim that
\al{
\int_{J_\tau} \left|  \left(  \frac{L'}{L}  \right)^{(k)}( \sigma+it,f ) \right|^2 dt
	&\ll_{A,\epsilon} (N(x;T) - \log\log Q) \left(\frac{(\log x)(\log Q_\tau)} { e^{N(x;T)} }\right)^{2k+1} \nn
	&\quad +   \tau (\log Q_\tau)^{2k+1} \left(\log\frac{\log x}{\log Q}\right)  
		+ \tau (\log V_\tau)^{2k+2} \label{complex-3-e2} \\
	 &\quad +  (\log Q)^{2k+1} \sum_{j=1}^{k+1} \tau^{\frac{2(k-j+1)}{A-2}}  I_{j,\tau}, \nonumber
}
where
\[
I_{j,\tau} =  \min\left\{
			\left( \frac{\log x}{\log Q}\right)^{2j-1}  ,
			\frac{ \tau^{\frac{2}{A-2}} }{ (\log Q)^{2j-1}}\int_{J_\tau}|L_{Q}^{(j)}(\sigma+it,f)|^2dt   
		\right\}.
\]

Before we proceed to the proof of \eqref{complex-3-e2}, we show how to combine relations \eqref{complex-3-e1} and \eqref{complex-3-e2} in order to complete the proof of the Proposition \ref{complex-prop-3}. We partition the range of integration on the right hand side of \eqref{plancherel} as $\bigcup_{m\ge0}J_{2^m}$ and apply \eqref{complex-3-e1} or \eqref{complex-3-e2} to the part of the integral over $J_{2^m}$ according to whether $2^m>T$ or $2^m\le T $. So
\als{
 I_k(\sigma;\Lambda f)^2
	&\ll_{A,\epsilon} \sum_{2^m\le T} \left\{ \frac{(N(x;T) - \log\log Q)}{4^m} 
		\left(\frac{(\log x)(\log Q_{2^m})} { e^{N(x;T)} }\right)^{2k+1} \right. \\
	&\quad\qquad\qquad \left. +   \frac{(\log Q_{2^m})^{2k+1} \log\frac{\log x}{\log Q} }{2^m} 
		+ \frac{(\log V_{2^m})^{2k+2}}{2^m}\right\} \\
	&\qquad	+  \sum_{j=1}^{k+1} \sum_{ 2^m\le T  } 
			\frac{(\log Q)^{2k+1} I_{j,2^m} }{ 4^{m\left(1-\frac{k-j+1}{A-2}\right)}}  
		+ \sum_{2^m>T }  \left\{ \frac{(\log x)^{2k+1}}{4^m} + \frac{ (\log V_{2^m})^{2k+2}}{2^m} \right\}, 
}
Since $\log Q_{2^m}\asymp 2^{\frac{m}{A-2}}\log Q$, we find that
\als{
I_k(\sigma;\Lambda f)^2
	&\ll_{A,\epsilon} (\log T) \left( 1+ T^{\frac{2k+1}{A-2} -2 } \right) 
		(N(x;T) - \log\log Q) \left(\frac{(\log x)(\log Q)} { e^{N(x;T)} }\right)^{2k+1}  \\
	&\quad + (\log T) \left( 1+ T^{\frac{2k+1}{A-2} -1 } \right) (\log Q)^{2k+1}
		\left(\log \frac{\log x}{\log Q} \right)   + \frac{(\log x)^{2k+1} }{T^2}   \\
	&\quad +  (\log Q)^{2k+1}\sum_{j=1}^{k+1} \sum_{ 2^m\le T } 
			\frac{I_{j,2^m} }{ 4^{m\left(1-\frac{k-j+1}{A-2}\right)}}  .
}
We use our assumption that $T< T_1$, which implies that $T< (e^{N(x;T)}/\log Q)^{A-2}$, to bound $ (\log T) (1+ T^{\frac{2k+1}{A-2} -2 } )$ and our assumption that $T \le T_2 $ to bound $ (\log T) (1+ T^{\frac{2k+1}{A-2} -1 } )$. This yields that
\al{
\frac{I_k(\sigma;\Lambda f)^2}{(\log x)^{2k+1}}
	&\ll_{A,\epsilon}  (N(x;T) - \log\log Q)^2 \left(\frac{ \log Q} { e^{N(x;T)} }\right)^{ \min\{ 2k+1, 2(A-2) \} }  \nn
	&\quad + 	\left(\log \frac{\log x}{\log Q} \right)^2 
		\left( \frac{\log Q }{ \log x } \right)^{  \min \left\{ 2k+1 , \frac{2(2k+1)(A-2)}{2k+A-1} \right\}  } 
		+ \frac{1}{T^2}  \label{complex-3-e4} \\
	&\quad	+  \left( \frac{\log Q}{\log x} \right)^{ 2k+1 }  
		\sum_{j=1}^{k+1} \sum_{ 2^m\le T } 
			\frac{I_{j,2^m} }{ 4^{m\left(1-\frac{k-j+1}{A-2}\right)}}  \nonumber. 
}
Finally, we bound the sum over $m$ appearing on the right hand side of \eqref{complex-3-e4} for each $j\in\{1,\dots,k+1\}$. So fix such a $j$ and note that, for any $L_j\ge4$, we have that
\[
\frac{I_{j,2^m}}{4^{m\left(1-\frac{k-j+1}{A-2}\right)}}
	\ll \begin{cases}
		\ds  \frac1{4^{m\left(1-\frac{k-j+1}{A-2}\right)} }   \left(\frac{\log x}{\log Q}\right)^{2j-1}   &\text{if}\ L_j<4^m\le T^2,\cr
				\cr
		\ds  L_j^{\frac{k-j+2}{A-2}}
			\int_{J_{2^m}} \frac{ |L_{Q}^{(j)}(\sigma+it,f)|^2}{(\log Q)^{2j-1} } \frac{dt}{\sigma^2+t^2}   
				&\text{if}\ 1<4^m\le L_j.
	     \end{cases}
\]
Since $1 - (k-j+1)/(A-2)\ge 1- k/(A-2)\ge0$, we deduce that
\eq{complex-3-e5}{
\sum_{ 2^m\le T  } \frac{I_{j,2^m}}{4^{m\left(1-\frac{k-j+1}{A-2}\right)}}
	&\ll \frac{\log(2T)}{L_j^{1 - \frac{k-j+1}{A-2} }  }  \left(\frac{\log x}{\log Q}\right)^{2j-1}
		+  L_j^{\frac{k-j+2}{A-2}}
			\int_{\SR}  \frac{ |L_{Q}^{(j)}(\sigma+it,f)|^2}{(\log Q)^{2j-1}} \frac{dt}{\sigma^2+t^2} 
}
In addition, Plancherel's identity yields that
\eq{complex-3-e6}{
\int_{\SR}  |L_{Q}^{(j)}(\sigma+it,f)|^2  \frac{dt}{\sigma^2+t^2} 
	=  2\pi  \int_{\log Q}^\infty   \left|\frac{S_j(e^u;f_{Q})}{e^{\sigma u}} \right|^2  du,
} 
where $f_{Q}(n)$ is defined to be  $f(n)$ if $P^-(n)>Q$ and 0 otherwise\footnote{Note that $S_j(e^u;f_{Q})=0$ for all $u\le\log Q$, since $j\ge1$.}. By relation\ \eqref{small-Q} and Lemma \ref{Lbound-1}(a), we find that
\[
S_0(e^u;f_{Q})  
	\ll_A e^u\frac{(\log Q)^{A-1}}{u^A}  \quad(u\ge4\log Q) .
\] 
Consequently, 
\[
S_j(e^u;f_{Q}) 
	=  O(e^{u/2}u^j) +  \int_{u/2}^u w^j d S_0(e^w;f_{Q})
	\ll_A \frac{e^u(\log Q)^{A-1}}{u^{A-j}}	\quad(u\ge8\log Q),
\]
by integration by parts. The above relation also holds trivially when $u\in[\log Q,8\log Q]$. Together with \eqref{complex-3-e6}, this implies that
\als{
\int_{\Re(s)=\sigma} |L_{Q}^{(j)}(\sigma+it,f)|^2  \frac{dt}{\sigma^2+t^2}
	&\ll_A (\log Q)^{2(A-1)} \int_{\log Q}^\infty \frac{du}{e^{2(\sigma-1)u} u^{2(A-j)} } \\
	&\le (\log Q)^{2(A-1)} \int_{\log Q}^\infty \frac{du}{u^{2(A-j)}}
		\ll (\log Q)^{2j-1} ,
}
since $2(A-j)\ge 2(A-k-1)\ge2$. Combining the above inequality with \eqref{complex-3-e5}, we obtain the estimate
\[
\sum_{ 2^m\le T}  \frac{I_{j,2^m}}{4^{m\left(1-\frac{k-j+1}{A-2}\right)}}
	\ll_A \frac{\log(2T)}{L_j^{1 - \frac{k-j+1}{A-2}} }\left(\frac{\log x}{\log Q}\right)^{2j-1}
		+  L_j^{\frac{k-j+2}{A-2}} .
\]
We choose $L_j=(\log x/\log Q)^{\frac{(A-2)(2j-1)}{A-1}}$ and note that $\log T\le \log T_2\ll_A\log(\log x/\log Q)$. So
\eq{complex-3-e4b}{
\sum_{ 2^m\le T  } \frac{I_{j,2^m}}{4^{m\left(1-\frac{k-j+1}{A-2}\right)}}
	\ll_A \left(\frac{\log x}{\log Q}\right)^{\frac{(2j-1)(k-j+2)}{A-1} }\left( \log \frac{\log x}{\log Q} \right) .
}
Finally, note that
\[
(k-j+2)(2j-1)  \le   \left(  \fl{ \frac {k}2 }   +1  \right)  \left( 2k - 2  \fl{ \frac{k}{2} } +  1   \right)
			=   \frac{(k+1)(k+2)}2 ,
\]
since $k-\lfloor k/2\rfloor+1$ is the nearest integer to $k/2+5/4$ (i.e. the point where the quadratic polynomial $t\to(k-t+2)(2t-1)$ is maximized). Consequently, \eqref{complex-3-e4b} becomes
\[
\sum_{ 2^m\le T }  \frac{I_{j,2^m}}{4^{m\left(1-\frac{k-j+1}{A-2}\right)}}
	\ll_A \left(\frac{\log x}{\log Q}\right)^{\frac{(k+1)(k+2)}{2(A-1)}} \left( \log \frac{\log x}{\log Q} \right).
\]
Inserting the above relation into \eqref{complex-3-e4} completes the proof of Proposition \ref{complex-prop-3}.

\medskip


It remains to prove relation \eqref{complex-3-e2}. First, we prove a pointwise bound for $(L'/L)^{(k)}(s,f)$. Fix $\tau\in[2,T]$ and let $t\in J_\tau\subset [-T,T]$, so that $Q_t\le x$. Note that relation \eqref{small-Qt} and Theorem \ref{Lbound-2}(a) imply that
\al{
| L_z^{(j)}(\sigma+it,f)|
	& \ll_A (\log z)^j \min \left\{ \frac{A-j}{A-j-1}, \log \frac{\log x}{\log z} \right\} \label{complex-der-0} \\
	&\ll_A \left\{ (\log z)  \left(  \log  \frac{\log x}{\log z}  \right)^{\frac{1}{k+1}} \right\}^j \nonumber ,
}
uniformly in $\sqrt{Q_t}\le z\le \sqrt{x}$ and $j\in \{1,\dots,k+1\}$. Thus, applying Lemma \ref{der-bound} with $F(s)=L(s,f)$, we deduce that
\eq{complex-der-1}{
\left| \left(\frac{L'}{L}\right)^{(k)} ( \sigma+it, f ) \right| 
	\ll_A \left( \frac{\log z}{\min\{1,|L_z(\sigma+it,f)|\}} \right)^{k+1}
		 \left( \log  \frac{\log x}{\log z} \right) .
}
For each $t\in J_\tau$ we choose $z=y_t$, where
\eq{yt}{
\log y_t = \frac{1}{2} \cdot \min  \left\{ \frac{\log Q_t}{ \min\{ |L_{Q_t}(\sigma+it,f)|,1\} }  , \log x \right\}.
}
Note that Lemma \ref{dist-l1} and Theorem \ref{complex-dist} imply that
\[
\log|L_{y_t} (\sigma+it,f)| = O(1) + \sum_{y_t<p\le x} \frac{\Re(f(p)p^{-it})}{p} \ll_\epsilon 1,
\]
that is to say, $|L_{y_t} (\sigma+it,f)|\asymp_\epsilon 1$. So we find that
\eq{complex-der-2}{
\left| \left(\frac{L'}{L}\right)^{(k)} ( \sigma+it, f ) \right| 
	\ll_{A,\epsilon} (\log y_t)^{k+1} \left( \log  \frac{\log x}{\log y_t} \right) .
}
Before we proceed further, we need to bound $y_t$ in terms of $N(x;T)$. Firstly, we claim that $\log y_t\asymp \log Q_t / |L_{Q_t}(\sigma+it,f)|$. Indeed, we have that $L_{Q_t}(\sigma+it,f)\ll 1$, by Lemma \ref{Lbound-2}(a) and relation \ref{small-Qt}, which implies that
\[
\log y_t \asymp \min  \left\{ \frac{\log Q_t}{|L_{Q_t}(\sigma+it,f)|}  , \log x \right\} .
\]
Moreover, since $Q_t\le x$ for $|t|\le T\le T_2$, we also have that
\[
\log|L_{Q_t} (\sigma+it,f)| = O(1) + \sum_{Q_t<p\le x} \frac{\Re(f(p)p^{-it})}{p} 
	\ge O(1) - \log\left(\frac{\log x}{\log Q_t}\right) ,
\]
where we used Lemma \ref{dist-l1} and Mertens' estimate on $\sum_{p\le t}1/p$. This proves our claim that $\log y_t\asymp \log Q_t / |L_{Q_t}(\sigma+it,f)|$. As a result, Lemma \ref{dist-l1} and the definition of $N(x;T)$ imply that
\eq{N(x;T)-alt}{
\frac{\log x}{\log y_t} 
	\asymp \frac{\log x}{\log Q_t} \cdot |L_{Q_t}(\sigma+it,f)| 
	&\asymp \exp\left\{\sum_{Q_t<p\le x} \frac{1+\Re(f(p)p^{-it}) }{p} \right\} \\
	&\ge \max\left\{ \frac{e^{N(x;T)}}{\log Q_t}, 1 \right\}    \quad(-T\le t\le T),
}
which provides the required relation between $y_t$ and $N(x;T)$.


Now, let $t_0\in J_\tau$ be such that $\log y_{t_0} = \max_{t\in J_\tau} \log y_t$, and set 
\[
\mathscr{A}_\tau=J_\tau \cap[t_0-1/\log Q_\tau,t_0+1/\log Q_\tau]
	\quad\text{and}\quad 
\mathscr{B}_\tau=J_\tau \setminus\mathscr{A}_\tau .
\] 
Theorem \ref{min-dist} and relation \eqref{small-Qt} imply that 
\eq{complex-max}{
\log y_t \asymp
	\begin{cases}
		\log y_{t_0} 	&\text{if}\ |t-t_0| \le 1 / \log y_{t_0}  ,\cr 
		|t-t_0|^{-1}		&\text{if}\ 1/\log y_{t_0} \le |t-t_0| \le 1 / \log Q_\tau  ,\cr 
		\log Q_\tau 	&\text{if}\ |t-t_0|\ge 1/\log Q_\tau, 
	\end{cases}
}
for every $t\in J_\tau$. The above estimate and relation \eqref{complex-der-2} imply that
\[
\int_{\mathscr{A}_\tau} \left| \left( \frac{L'}{L}\right)^{(k)}(\sigma+it,f)\right|^2  dt
	\ll_{A,\epsilon} ( \log y_{t_0} )^{2k+1} \left( \log \frac{\log x}{\log y_{t_0} } \right)  .
\]
Together with relation \eqref{N(x;T)-alt}, this implies that
\[
\int_{\mathscr{A}_\tau} \left| \left( \frac{L'}{L}\right)^{(k)}(\sigma+it,f)\right|^2  dt
	\ll_{A,\epsilon} \begin{cases} 
		\left( \frac{(\log x)(\log Q_{\tau})}{ e^{N(x;T)} }\right)^{2k+1} 
			\left( \log\frac{e^{N(x;T)}}{\log Q_\tau } \right)
		&\text{if}\ e^{N(x;T)}\ge 2\log Q_\tau,\cr
		(\log x)^{2k+1} &\text{if}\ e^{N(x;T)}< 2\log Q_\tau.
	\end{cases}
\]
In any case, since $Q_\tau\ge Q$ and $e^{N(x;T)}\ge 2\log Q$, we find that
\eq{complex-e2-A}{
\int_{\mathscr{A}_\tau} \left| \left( \frac{L'}{L}\right)^{(k)}(\sigma+it,f)\right|^2  dt
	\ll_{A,\epsilon} (N(x;T) - \log\log Q) \left( \frac{(\log x)(\log Q_\tau )}{e^{N(x;T)}} \right)^{2k+1}  ,
}
which is admissible. Finally, we bound the contributions to the integral on the left hand side of \eqref{complex-3-e2} from $t\in \mathscr{B}_\tau $. Note that
\eq{P- P+}{
\left( \frac{L'}{L} \right)^{(k)}(\sigma+it,f)
		-  \left( \frac{L_{Q}'}{L_{Q}} \right)^{(k)}(s,f)   
		&= \sum_{P^-(n)\le Q}\frac{f(n)\Lambda(n)(\log n)^k}{n^s} \\
		&= \sum_{P^+(n)\le Q}\frac{f(n)\Lambda(n)(\log n)^k}{n^s} .
}
for any $s\in\SC$ with $\Re(s)\ge1$. Moreover, 
\als{
\int_{J_\tau}\left|\sum_{P^+(n)\le Q}
		 \frac{ f(n) \Lambda(n)(\log n)^{k} }{ n^{\sigma+it} }  \right|^2  dt 
		 &\le 3 \int_{-\tau}^{\tau}\left|\sum_{P^+(n)\le Q}
		 \frac{ \Lambda(n)(\log n)^{k} }{ n^{\sigma+it} }  \right|^2  dt \\
		 &\ll_A (\log Q )^{2k+1} + \tau (\log V_\tau)^{2k+2},
}
by Lemmas \ref{mont} and \ref{lambda}. So we deduce that
\eq{complex-3-e7}{
\int_{\mathscr{B}_\tau } \left| \left( \frac{L'}{L} \right)^{(k)}(\sigma+it,f)
		-  \left( \frac{L_{Q}'}{L_{Q}} \right)^{(k)}(\sigma+it,f)   \right|^2  dt
	\ll_A (\log Q)^{2k+1} + \tau (\log V_\tau)^{2k+2}.
}
Next, Lemma \ref{der-lemma} implies that
\eq{complex-3-e8}{
\int_{\mathscr{B}_\tau } \left| \left(  \frac{L_{Q}'}{L_{Q}} \right)^{(k)} ( \sigma+it,f )  \right|^2  dt
	& \ll_A \sum_{j=1}^{k+1}\int_{\mathscr{B}_\tau} 
		\left|\frac{ L_{Q}^{ (j) } }{ L_{Q} }(\sigma+it,f) \right|^{\frac{2(k+1)}j}  dt.
}
Relation \eqref{complex-max} gives us that
\eq{complex-der-3}{
|L_{Q_t}(\sigma+it,f)| \asymp_\epsilon1 \quad( t\in\mathscr{B}_\tau).
}
So, when $j\in\{1,\dots,k\}\subset[1,A-2]$, then applying Lemma \ref{der-bound} with $z=Q$, $f(n)n^{-it}$ in place of $f$, $j$ in place of $k$, $Q_t$ in place of $Q$ and $M\asymp \log Q_t$, which is possible by relation \eqref{complex-der-0}, we deduce that
\eq{complex-3-e9a}{
\frac{L_Q^{ (j) } } { L_Q } (\sigma+it,f)	
	\ll_{A,\epsilon}  \left( \frac{\log Q_t}{\min\{1,|L_{Q_t}(\sigma+it,f)|\} }\right)^j
	\ll_{A,\epsilon} (\log Q_\tau)^j	\quad( t\in \mathscr{B}_\tau )  .
}
Consequently,
\eq{complex-3-e9}{
\int_{\mathscr{B}_\tau } \left| \frac{L_{Q}^{ (j) } } { L_{Q} } (\sigma+it,f) \right|^{ \frac{ 2(k+1) }{j} }dt  
	\ll_{A,\epsilon}  (\log Q_\tau)^{2(k-j+1)}   
		\int_{\mathscr{B_\tau}}  \left|  \frac{L_Q^{ (j) } } { L_Q} (\sigma+it,f)  \right|^2  dt,
}
by \eqref{complex-3-e9a} if $j\in\{1,\dots,k\}$ and trivially if $j=k+1$. It remains to bound the integral on the right hand side of \eqref{complex-3-e9}, which we perform in two different ways. Firstly, Lemmas \ref{mont} and \ref{lambda} imply the `trivial' bound 
\eq{complex-3-e10}{
\int_{\mathscr{B}_\tau }  \left|  \frac{L_Q^{(j)}}{L_Q}(\sigma+it,f) \right|^2 dt 
	\le 3  \int_{-\tau}^\tau  \left|  \frac{\zeta^{ (j) } }{\zeta}(\sigma+it)  \right|^2  dt 
	 \ll_A \tau (\log V_\tau)^{2j}  +    (\log x)^{2j-1}.
}
Finally, observe that 
\[
|L_Q(\sigma+it,f)| 
	\gg \frac{\log Q}{\log Q_t} |L_{Q_t}(\sigma+it,f)|
	\asymp_\epsilon \frac{\log Q}{\log Q_t} \asymp \frac{1}{\tau^{\frac{1}{A-2}} } \quad(t\in\mathscr{B}_\tau),
\] 
by \eqref{complex-der-3}, and thus
\eq{complex-3-e11}{
\int_{\mathscr{B}_\tau }  \left|  \frac{L_Q^{ (j)} }{ L_Q } (\sigma+it,f) \right|^2 dt
	\ll_\epsilon  \tau^{\frac{2}{A-2}}   \int_{J_\tau}  \left|  L_{Q}^{(j)}(\sigma+it,f)   \right|^2dt.
}
Combining relations \eqref{complex-3-e10} and \eqref{complex-3-e11}, we conclude that
\[
\int_{\mathscr{B}_\tau}  \left|  \frac{L_Q^{ (j)} }{L_Q}(\sigma+it,f) \right|^2  dt  
	\ll_{A,\epsilon}  (\log Q)^{2j-1} I_{j,\tau}  + \tau  (\log V_\tau)^{ 2j}    .
\]
The above estimate and relation \eqref{complex-3-e9} imply that
\als{
\int_{\mathscr{B}_\tau } \left| \frac{L_{Q}^{ (j) } } { L_{Q} } (\sigma+it,f) \right|^{ \frac{ 2(k+1) }{j} }dt  
	& \ll_{A,\epsilon} (\log Q_\tau)^{2(k-j+1)}  
		 \left\{ (\log Q)^{2j-1} I_{j,\tau}  + \tau  (\log V_\tau)^{ 2j}  \right\} \\
	&\ll_{A}  (\log Q)^{2k+1} \tau^{\frac{2(k-j+1)}{A-2}} I_{j,\tau}  
 		+ \tau (\log Q_\tau)^{2(k-j+1)} (\log V_\tau)^{ 2j}  ,
}
since $\log Q_\tau \asymp \tau^{\frac{1}{A-2}} \log Q$. Furthermore, note that
\[
(\log Q_\tau)^{2(k-j+1)} (\log V_\tau)^{ 2j}
	\ll (\log V_\tau)^{2k+2}+ (\log Q_\tau)^{2k}(\log V_\tau)^2
	\ll_A (\log V_\tau)^{2k+2}+  (\log Q_\tau)^{2k+1}.
\]
The above inequalities, together with \eqref{complex-3-e8}, yield the estimate
\als{
\int_{\mathscr{B}_\tau } \left| \left(  \frac{L_{Q}'}{L_{Q}} \right)^{(k)} ( \sigma+it,f )  \right|^2  dt
	& \ll_{A,\epsilon}  \tau \left\{ (\log V_\tau)^{2k+2}+  (\log Q_\tau)^{2k+1} \right\} \\
	&\quad  + ( \log Q)^{2k+1} \sum_{j=1}^{k+1} \tau^{\frac{2(k-j+1)}{A-2} }  I_{j,\tau}   .
}
Together with \eqref{complex-e2-A}, the above relation completes the proof of \eqref{complex-3-e2} and hence of Proposition \ref{complex-prop-3}.
\end{proof}


\begin{prop}\label{power-prop} Let $f:\SN\to[-1,1]$ be a completely multiplicative function which satisfies \eqref{power} for some $\delta\in(0,1/3)$ and some $Q\ge e^{1/\delta}$. For $x\ge 2$, $\sigma>1$ and $k\in\SN\cup\{0\}$, we have that
\als{
I_k(\sigma;\Lambda f)
	&\le k! \left( \frac{c \log Q}{L_Q(1,f) }  \right)^{  k + \frac{1}{2} }  + c^k k!^2 ,
}
where $c=c(\delta)$ is some positive constant.
\end{prop}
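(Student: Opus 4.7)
The plan is to apply Plancherel's identity, exactly as in \eqref{plancherel} in the proof of Proposition \ref{complex-prop-3}:
\[
I_k(\sigma;\Lambda f)^2 = \frac{1}{2\pi}\int_\SR \frac{|(L'/L)^{(k)}(\sigma+it,f)|^2}{\sigma^2+t^2}\,dt,
\]
so the task reduces to bounding this integral. I would split the integration range into the central interval $[-1,1]$ and the dyadic tails $|t|\in[\tau,2\tau]$ with $\tau\ge 1$. The much stronger hypothesis \eqref{small-power} (power saving versus merely logarithmic saving in \eqref{small-Q}) should allow for a simpler argument than in Proposition \ref{complex-prop-3}.

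For the dyadic tails, Lemma \ref{mont} majorizes $|(L'/L)^{(k)}(\sigma+it,f)|^2$ in $L^2$ by $3|(\zeta'/\zeta)^{(k)}(\sigma+it)|^2$ (the Dirichlet coefficients of the former being dominated in absolute value), and Lemma \ref{lambda} then bounds $\int_{-2\tau}^{2\tau}|(\zeta'/\zeta)^{(k)}(\sigma+it)|^2\,dt$. Dividing by the weight $\sigma^2+t^2\asymp\tau^2$ and summing the dyadic contributions over $\tau\ge 1$ produces a total tail bound of order $c^{2k}(k!)^4$, matching the second term in the claimed inequality.

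For the central range $|t|\le 1$, I would invoke Lemma \ref{der-bound} with $z=Q$. The required derivative bound $|L_Q^{(j)}(\sigma+it,f)|\ll (j+1)!(\log Q)^j$ is a complex-variable extension of Lemma \ref{Lbound-2}(b), provable by inserting Lemma \ref{Lbound-1}(a) into the same proof (the resulting factor $(|t|+1)\le 2$ is harmless for $|t|\le 1$), giving $M\ll\log Q$ in Lemma \ref{der-bound} and hence
\[
|(L'/L)^{(k)}(\sigma+it,f)|\ll c^{k+1}(k+1)!\left(\frac{\log Q}{\min\{1,|L_Q(\sigma+it,f)|\}}\right)^{k+1}
\]
(the small-prime contribution $(L'/L)^{(k)}-(L'_Q/L_Q)^{(k)}$ being pointwise $\ll (\log Q)^{k+1}$ and thus absorbed). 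For the lower bound on $|L_Q(\sigma+it,f)|$, Theorem \ref{siegel} gives $L_Q(\sigma,f)\gtrsim L_Q(1,f)$ for $\sigma>1$, and continuity (using $|L_Q'|\ll\log Q$) extends this to $|L_Q(\sigma+it,f)|\gtrsim L_Q(1,f)$ on $|t|\lesssim L_Q(1,f)/\log Q$. For larger $|t|\in[-1,1]$, Theorem \ref{min-dist} applies; since $f$ is real-valued, the symmetry $|L_Q(\sigma+it,f)|=|L_Q(\sigma-it,f)|$ lets us take the minimizer $t_0=0$, producing the trichotomy $|L_Q(\sigma+it,f)|\asymp|t|\log Q$ on $[L_Q(1,f)/\log Q,1/\log Q]$ and $\asymp 1$ on $[1/\log Q,1]$. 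Integrating the pointwise bound with weight $1/(\sigma^2+t^2)$ across the three subranges yields a contribution of order $(k!)^2(c\log Q/L_Q(1,f))^{2k+1}$, matching the first term.

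The main obstacle is the subrange $1/\log Q\le|t|\le 1$, on which $|L_Q|\asymp 1$ and Lemma \ref{der-bound} only yields the pessimistic pointwise bound $(k+1)!(\log Q)^{k+1}$; naively integrating gives $(k+1)!^2(\log Q)^{2k+2}$, which exceeds the target by a factor of $\log Q$ when $L_Q(1,f)$ is not close to zero. I expect to resolve this by replacing the pointwise estimate on this subrange with the $L^2$ bound obtained from Lemmas \ref{mont} and \ref{lambda} on $[-1,1]$---which gives, for $\sigma=1+1/\log x$, the estimate $\int_{-1}^1|(\zeta'/\zeta)^{(k)}(\sigma+it)|^2\,dt\ll (k+1)!^2(1+(\log x)^{2k+1})$---and choosing $x$ so that $\log x\asymp\log Q/L_Q(1,f)$, which absorbs the excess factor and delivers the claimed bound after taking square roots; an alternative is to introduce, as in the proof of Proposition \ref{complex-prop-3}, a cutoff $y_t$ depending on $|t|$ with $|L_{y_t}(\sigma+it,f)|\asymp 1$ and apply Lemma \ref{der-bound} with $z=y_t$, so that the $\log(zQ)$ factor adjusts to the effective support size.
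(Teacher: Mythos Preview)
Your skeleton (Plancherel, pointwise bound on the central range via Lemma~\ref{der-bound} combined with the trichotomy of Theorem~\ref{min-dist}/\ref{real-complex}, and an $L^2$ treatment of the rest) is the same as the paper's, and you have correctly located the obstacle at $1/\log Q\le |t|\lesssim 1$. However, both of your proposed fixes fail, and for the same underlying reason: you never exploit the power saving in the hypothesis to make the estimate uniform in~$\sigma$. Your appeal to Lemmas~\ref{mont} and~\ref{lambda} with $z=\infty$ (for the tails and for fix~(a)) produces, besides the harmless $\tau(\log V_\tau)^{2k+2}$ term, a term of size $(\log x)^{2k+1}=(\sigma-1)^{-(2k+1)}$, which after dividing by $\tau^2$ and summing still leaves $(\sigma-1)^{-(2k+1)}$. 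You cannot ``choose $x$'': $\sigma$ is given, and by monotonicity of $I_k(\sigma;\Lambda f)$ in $\sigma$ a bound at $\sigma_0=1+L_Q(1,f)/\log Q$ only transfers to $\sigma\ge\sigma_0$, whereas the application (proof of Theorem~\ref{power}(a)) uses $\sigma=1+1/\log x$ with $\log x\gg\log Q/L_Q(1,f)$, i.e.\ $\sigma<\sigma_0$. Your alternative~(b), a pointwise bound with $|L_{y_t}|\asymp 1$, gives $|(L'/L)^{(k)}|\ll k!(c\log Q)^{k+1}$ on the obstacle range, and integrating over an interval of length~$\asymp 1$ yields $k!^2(c\log Q)^{2k+2}$, which overshoots the target $k!^2(c\log Q/L_Q(1,f))^{2k+1}$ by a factor~$\log Q$ precisely when $L_Q(1,f)\asymp 1$.

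The paper closes this gap as follows. On $|t|\ge 1/\log Q$ one has $|L_Q(\sigma+it,f)|\asymp_\delta 1$ (for $|t|\le Q$; for $|t|>Q$ the pointwise bound already suffices). First split off the $P^+\le Q$ piece of $(L'/L)^{(k)}$ and bound its $L^2$-norm by Lemma~\ref{lambda} with $z=Q$, which gives $(\log Q)^{2k+1}$ rather than $(\log x)^{2k+1}$. For the remaining $(L_Q'/L_Q)^{(k)}$, Lemma~\ref{der-lemma} together with $|L_Q|\asymp 1$ reduces matters to $\int_{J_\tau}|L_Q^{(j)}(\sigma+it,f)|^2\,dt$ for $1\le j\le k+1$. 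Now sum over dyadic $\tau$ and apply Plancherel \emph{once more}, this time to $L_Q^{(j)}$, turning the problem into bounding $\int_{\log Q}^\infty |S_j(e^u;f_Q)|^2 e^{-2\sigma u}\,du$. At this point the power-saving hypothesis enters: Lemma~\ref{Lbound-1}(a) and partial summation give $S_j(e^u;f_Q)\ll j\,u^j e^{u(1-1/(2\log Q))}/\log Q$, whence the integral is $\ll j^2(2j)!(\log Q)^{2j-1}$ \emph{uniformly in $\sigma>1$}. This second passage through Plancherel, feeding the hypothesis directly into the time-domain integral, is the missing idea.
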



\begin{proof} All constants $c_1,c_2,\dots$ that appear below might depend on $\delta$, but no other parameters. Also, without loss of generality, we may assume that $\delta$ is small enough, so that $Q\ge 100$.

We follow an argument that is similar with the one leading to Proposition \ref{complex-prop-3}. As in that proof, our starting point is relation \eqref{plancherel}. We bound the right hand side of \eqref{plancherel} by breaking the range of integration into sets of the form $J_\tau=\{t\in\SR : \tau/2-1 \le |t|\le \tau-1\}$, for $\tau\in\{2^m: m\in\SN\}$. For every $\tau\ge2$, we claim that
\eq{power-prop-e1}{
\frac{1}{k!^2} \int_{J_\tau} \left|  \left(  \frac{L'}{L}  \right)^{(k)}( \sigma+it,f ) \right|^2 dt
	&\le  \tau  \left( \frac{c_1 \log Q}{L_Q(1,f)} \right)^{2k+1} +  \tau (c_1\log\tau )^{2k+2} \\
	&\quad+ (c_1\log Q)^{2k+1} \sum_{j=1}^{k+1} \int_{J_\tau}   \frac{|L_Q^{(j)} ( \sigma+it,f ) |^2}{j!^2 (\log Q)^{2j-1} } dt .
}
Before we prove this estimate, we show how to use it to deduce Proposition \ref{power-prop}. Clearly, if this relation holds, then combining it with relations \eqref{plancherel} and \eqref{complex-3-e6} we deduce that
\al{
\frac{I_k(\sigma;\Lambda f)^2}{k!^2} 
	&\ll \sum_{m=1}^\infty \frac{1}{k!^2  4^m} 
	 	\int_{J_{2^m}} \left|  \left(  \frac{L'}{L}  \right)^{(k)}( \sigma+it,f ) \right|^2 dt  \nn
	&\ll \sum_{m=1}^\infty \frac{1}{2^m} \left\{ \left( \frac{c_1 \log Q}{L_Q(1,f)} \right)^{2k+1} 
		+  (c_2m )^{2k+2} \right\}  \nn
	&\quad + (c_1\log Q)^{2k+1} \sum_{j=1}^{k+1} \int_{\SR}
			\frac{|L_Q^{(j)} ( \sigma+it,f ) |^2}{j!^2 (\log Q)^{2j-1} } \frac{dt}{\sigma^2+t^2} \nn
	&\ll \left( \frac{c_1 \log Q}{L_Q(1,f)} \right)^{2k+1} + c_3^{k+1} k!^2 
		 +  \sum_{j=1}^{k+1}  \frac{ c_1^{2k+1}(\log Q)^{2k-2j+2} }{j!^2} 
		 	\int_{\log Q}^\infty  \left|\frac{S_j(e^u;f_{Q})}{e^{\sigma u}} \right|^2  du ,
			 \label{power-prop-e2}
}
where $f_Q(n)= f(n)$ if $P^-(n)>Q$ and $f(n)=0$ otherwise. Fix $j\in\{1,\dots,k+1\}$. Relation\ \eqref{small-power} and our assumption that $Q\ge e^{1/\delta}$ imply that
\[
|S_0(e^u; f)| \le \frac{e^{u(1-1/\log Q ) }}{u^2}  \quad(u\ge \log Q).
\]
So Lemma \ref{Lbound-1}(a) yields that
\[
S_0(e^u;f_{Q})  
	\ll \frac{(\log Q)e^{u(1- 1/(2\log Q)  )} } {u^2} + \frac{e^{u(1-1/(2\log Q)) } }{\log Q}
	\ll \frac{e^{u(1-1/(2\log Q)) } }{\log Q}   \quad(u \ge 4\log Q) .
\]
This relation also holds trivially if $u\in[(\log Q)/2,4\log Q]$. So partial summation implies that
\als{
S_j(e^u;f_{Q}) 
	&=  O\left( \frac{e^{u/2}u^j}{\log Q} \right) +  \int_{u/2}^u w^j d S_0(e^w;f_{Q}) \\
	&=  O\left( \frac{e^{u/2}u^j}{\log Q} \right) + u^j S_0(e^u; f_Q) -  j \int_{u/2}^u w^{j-1} S_0(e^w;f_{Q}) dw \\
	&\ll \frac{e^{u(1-1/(2\log Q)) } u^j }{\log Q} 
		+ j u^{j-1} \int_{u/2}^u  \frac{e^{w(1-1/(2\log Q))} }{ \log Q} dw 
	\ll \frac{e^{u(1-1/(2\log Q)) } j u^j }{\log Q} ,
}
for all $u\ge \log Q$. Consequently,
\als{
\int_{\log Q}^\infty  \left|\frac{S_j(e^u;f_{Q})}{e^{\sigma u}} \right|^2  du
	\ll \frac{j^2}{(\log Q)^2} \int_{\log Q}^\infty \frac{u^{2j}}{e^{u/\log Q}} du
	\le j^2 (2j)! (\log Q)^{2j-1}  .
}
Inserting the above estimate into \eqref{power-prop-e2} completes the proof of Proposition \ref{power-prop}, since $L_Q(1,f)\ll 1$ by Lemma 4.4(b).

\medskip


It remains to prove relation \eqref{power-prop-e1}. First, note that, for any $x \ge Q^2$, our assumption that $f$ satisfies \eqref{small-power} and the argument leading to \eqref{Lbound-1-it} imply that
\als{
 \sum_{n\le x}f(n) n^{-it} 
 	&=O(\sqrt{x}) + \frac{1}{x^{it}} \sum_{n\le x}f(n) n^{-it}  
		+ \int_{\sqrt{x}}^x \frac{it}{u^{1+it}} \left( \sum_{n\le u}f(n) \right) du \nn
	&\ll \sqrt{x} +  \frac{ x^{1- \delta}}{ (\log x)^2} 
		+ |t| \int_{\sqrt{x}}^x \frac{du}{u^\delta} \nn
	&\le \sqrt{x} +  \frac{ x^{1- \delta}}{ (\log x)^2} 
		+ \frac{|t|}{x^{\delta/2}} \int_{\sqrt{x}}^x du
	\ll \frac{ (1+|t|) x^{1- \delta/2}}{(\log x)^2}  .
}
So if $x\ge q_t = \max\{Q^4,(|t|+3)^{4/\delta}\}$, then
\eq{small-qt}{
\sum_{n\le x}f(n) n^{-it}
	&\ll \frac{x^{1- \delta / 4 } }{(\log x)^2} 
	  \le  \frac{ x^{1- 1/ \log q_t  } }{(\log x)^2} .
}
Consequently, Lemma \ref{Lbound-2}(b) yields that
\eq{power-der-0}{
|L_{q_t}^{(j)}(\sigma+it, f) | \ll j! (c_1 \log q_t )^j \quad(j\in\SN\cup\{0\},\ t\in\SR).
}
Together with Lemma \ref{der-bound}, this implies that
\eq{power-der-1}{
\left| \left(\frac{L'}{L} \right)^{(k)}(\sigma+it, f) \right| 
	\le k!\left( \frac{ c_2 \log q_t }{ |L_{q_t}(\sigma+it,f) | } \right)^{k+1}
		 \quad(t\in\SR).
}
Moreover, combining Theorems \ref{min-dist} and \ref{real-complex} with our assumption that $f$ is real-valued and satisfies \eqref{small-power}, we find that
\eq{power-yt}{
\frac{\log q_t}{|L_{q_t}(\sigma+it,f)|}
	\asymp_\delta \begin{cases}
	(\log Q)/L_Q(1,f)  &\text{if}\ |t|\le L_Q(1,f)/\log Q, \cr
	1/|t| &\text{if}\ L_Q(1,f)/\log Q\le |t|\le 1/\log Q,\cr
	\log(Q+|t|) &\text{if}\ |t|\ge 1/\log Q.
	\end{cases}
}
So 
\eq{power-prop-e3}{
\int_{-1/\log Q}^{1/\log Q} \left| \left( \frac{L'}{L}\right)^{(k)}(\sigma+it,f)\right|^2  dt
	\ll k!^2 \left( \frac{c_4\log Q}{L_Q(1,f)} \right)^{2k+1} ,
}
which is admissible. Therefore it remains to bound the contributions to the integral on the left hand side of \eqref{power-prop-e1} from $t\in \mathscr{B}_\tau: = J_\tau\setminus[-1/\log Q,1/\log Q] $. First, note that if $\tau>Q$, then relations \eqref{power-der-1} and \eqref{power-yt} imply that
\eq{power-prop-e4}{
\int_{ \mathscr{B}_\tau} \left| \left( \frac{L'}{L}\right)^{(k)}(\sigma+it,f)\right|^2  dt
\le k!^2 \tau (c_5 \log \tau  )^{2k+2},
}
which is admissible. Finally, assume that $\tau\le Q$. As in \eqref{P- P+}, we have that
\als{
\left( \frac{L'}{L} \right)^{(k)}(\sigma+it,f)
		-  \left( \frac{L_{Q}'}{L_{Q}} \right)^{(k)}(s,f)   
		= \sum_{P^+(n)\le Q}\frac{f(n) \Lambda(n)(\log n)^k}{n^s} 
}
for any $s\in\SC$ with $\Re(s)\ge1$. Moreover, 
\als{
\frac{1}{k!^2}\int_{J_\tau}\left|\sum_{P^+(n)\le Q}
		 \frac{ f(n) \Lambda(n)(\log n)^{k} }{ n^{\sigma+it} }  \right|^2  dt 
		 &\le \frac{3}{k!^2} \int_{-2\tau}^{2\tau}\left|\sum_{P^+(n)\le Q}
		 \frac{ \Lambda(n)(\log n)^{k} }{ n^{\sigma+it} }  \right|^2  dt \\
		 &\le (c_6\log Q)^{2k+1} + \tau (c_6\log V_\tau)^{2k+2},
}
by Lemmas \ref{mont} and \ref{lambda}. So we deduce that
\eq{power-prop-e5}{
\frac{1}{k!^2}&\int_{\mathscr{B}_\tau } \left| \left( \frac{L'}{L} \right)^{(k)}(\sigma+it,f)
		-  \left( \frac{L_{Q}'}{L_{Q}} \right)^{(k)}(\sigma+it,f)   \right|^2  dt \\
	&\ll  (c_7\log Q)^{2k+1} + \tau (c_7\log V_\tau )^{2k+2}.
}
Next, Lemma \ref{der-lemma} implies that
\eq{power-prop-e6}{
\frac{1}{(k+1)!^2} \int_{\mathscr{B}_\tau } \left| \left(  \frac{L_{Q}'}{L_{Q}} \right)^{(k)} ( \sigma+it,f )  \right|^2  dt
	& \le \sum_{j=1}^{k+1}  \frac{4^{k+1} }{ j!^{\frac{2(k+1)} j } } 
		\int_{\mathscr{B}_\tau} \left|\frac{ L_{Q}^{ (j) } }{ L_{Q} }(\sigma+it,f) \right|^{\frac{2(k+1)}j}  dt.
}
Using relation \eqref{power-yt} and since $\log q_t\asymp_\delta \log Q$ for $t\in J_\tau$, by our assumption that $\tau\le Q$, we obtain the estimate
\[
|L_{Q}(\sigma+it,f)| \asymp_\delta 1 \quad( t\in\mathscr{B}_\tau).
\]
Together with relation \eqref{power-der-0}, this implies that
\als{
\int_{\mathscr{B}_\tau } \left| \frac{L_{Q}^{ (j) } } { L_{Q} } (\sigma+it,f) \right|^{ \frac{ 2(k+1) }{j} }dt  
	&\le  c_8^{k+1} (j!(\log Q)^j)^{\frac{2(k-j+1)}{j}}   
		\int_{\mathscr{B_\tau}}  \left|\frac{ L_{Q}^{ (j) } }{ L_{Q} }(\sigma+it,f) \right|^2 dt \\
	&\ll_\delta  c_8^{k+1} (j!(\log Q)^j)^{\frac{2(k-j+1)}{j}}   \int_{\mathscr{B_\tau}}  |  L_Q^{ (j) } (\sigma+it,f)  |^2  dt .
}
Inserting the above bound into \eqref{power-prop-e6}, we find that
\eq{power-prop-e7}{
\frac{1}{k!^2}  \int_{\mathscr{B}_\tau } \left| \left(  \frac{L_{Q}'}{L_{Q}} \right)^{(k)} ( \sigma+it,f )  \right|^2  dt
	& \le c_9^{k+1} \sum_{j=1}^{k+1} \frac{(\log Q)^{2(k-j+1)} }{j!^2}
		 \int_{J_\tau}  |  L_Q^{ (j) } (\sigma+it,f)  |^2  dt ,
}
which is admissible. Combining relations \eqref{power-prop-e3}, \eqref{power-prop-e4} and \eqref{power-prop-e7}, we obtain relation \eqref{power-prop-e1}, thus completing the proof of the proposition.
\end{proof}


\subsection{Completion of the proofs}\label{complex-completion}

We conclude this section with the proof of part (a) of Theorems \ref{complex} and \ref{power}.

\begin{proof}[Proof of Theorem \ref{complex}(a)] Recall the definition of $B$ from \eqref{B} and set $k=\lfloor A-2 \rfloor$ and
\begin{equation}\label{B'}
B' = \min\left\{k  + \frac{1}{2} - \frac{(k+1)(k+2)}{4(A-1)}, \frac{(A-2)(2k+1)}{2k+A-1} \right\} . 
\end{equation}
We claim that $B'\ge B$. Indeed, if $2<A<3$ or if $3\le A<4$, it is straightforward to check that $B'=(A-2)/(2A-2)=B$ or $B'=3(A-2)/(2A-2)=B$, respectively. Assume now that $A\ge4$. Since $A-2\ge k>A-3$, we have that
\[
k  + \frac{1}{2} - \frac{(k+1)(k+2)}{4(A-1)} 
	\ge k  + \frac{1}{2} - \frac{(k+1)(k+2)}{4(k+1)} = \frac{3k}{4} \ge \frac{3(A-3)}{4} \ge \frac{2A}{3} - 2,
\]
and
\[
\frac{(A-2)(2k+1)}{2k+A-1} \ge \frac{(A-2)(2A-5)}{3A-7} \ge \frac{2A}{3} - 2,
\]
which together imply that $B'\ge B$ in this last case too. Thus it suffices to prove Theorem \ref{complex} with $B'$ in place of $B$. 

Without loss of generality, we may assume that $Q$ is large enough in terms of $A$ and $\epsilon$. Let $x\ge Q$ and $T\ge1$, and set $L = e^{N(x;T)} / \log Q $ and $z= x^{1/(10B'\log L) } $. Note that $L\ll \log x/\log Q $, by Lemma \ref{dist-l2}, which is applicable by relation \eqref{small-Qt}. Moreover, we may assume that $L$ is large enough in terms of $\epsilon$, so that $Q^4\le z\le x^{1/8}$; otherwise, the desired result holds trivially. Set $f_z(n)=f(n)$ if $P^-(n)>z$ and $f_z(n)=0$ otherwise. Relation \eqref{small-Q}, Lemma \ref{Lbound-1}(a) and the choice of $z$ imply that, for all $w\in[x^{1/4},x]\subset[z^2,x]$, we have that
\eq{complex-pf-e5}{
S_0(w;f_z) 
	&\ll_A w\cdot \frac{(\log Q)^{A-2}\log z}{(\log w)^A} + \frac{w^{1-1/(2\log z)}}{\log z} \\
	&\ll_A w\cdot \frac{(\log Q)^{A-2}}{(\log x)^{A-1}} + \frac{w}{x^{1/(8\log z)} \log z} 
		\ll_A \frac{ w }{L^{B'} \log x} .
}
The above estimate and partial summation then give us that
\eq{complex-pf-e6}{
S_1(x; f_z) \ll_A \frac{x}{ L^{B'} } .
}
Moreover, since $f_z\log=f_z*f_z\Lambda$, by our assumption that $f$ is completely multiplicative, Dirichlet's hyperbola method yields
\al{
S_1(x; f_z)&= \sum_{dm\le t}\Lambda(d)f_z(d) f_z(m) \nn
	&= \sum_{d\le x^{3/4} } \Lambda(d) f_z(d) \left( S_0(x/d; f_z) - S_0(x^{1/4}; f_z) \right) 
	 + \sum_{m\le x^{1/4}}  f_z(m) S_0(x/m;\Lambda f_z)  . \label{complex-hyperbola}
}
The above formula, and relations \eqref{complex-pf-e5} and \eqref{complex-pf-e6}, yield that 
\[
 \sum_{m\le x^{1/4}}f_z(m)  S_0(x/m;\Lambda f_z) 
	\ll_A   \frac{ x }{L^{B'}}
		+ \sum_{ d\le x^{3/4} } \Lambda(d) \frac{x/d}{L^{B'}\log x}
\ll  \frac{x}{L^{B'}} .
\]
Since  $S_0(w;\Lambda f_z)=S_0(w;\Lambda f)+O(z\log w)$ 
for $w\ge1$, $f_z(m)=0$ for $m\in(1,z]$, and $z\le x^{1/8}$, we find that
\al{
S_0(x; \Lambda f) + \sum_{z<m\le x^{1/4}} f_z(m) S_0(x/m ; \Lambda f)
	&= \sum_{m \le x^{1/4}} f_z(m) S_0(x/m ; \Lambda f)  \nn
	&= \sum_{m \le x^{1/4}} f_z(m) S_0(x/m ; \Lambda f_z )  + O( x^{3/8} \log x) \nn
	&\ll_A \frac{x}{L^{B'} } .   \label{remove z}
}
Next, set $\Delta=x/L^{B'} \in [ x/(\log x)^{B'}, x/2] $ and note that 
\[
|S_0(x/m;\Lambda f)-S_0(t/m;\Lambda f)|\ll \frac{\Delta}{m}  \quad(x-\Delta\le t\le x,\ m\le x^{1/4}).
\] 
In addition, $\sum_{m\le x^{1/4}}|f_z(m)|/m\ll \log x/\log z\asymp \log L$. Thus
\eq{insert D}{
S_0(x;\Lambda f)
	&= - \sum_{z<m\le x^{1/4}} f_z(m)\cdot\frac{1}{\Delta}\int_{x-\Delta}^x S_0(t/m;\Lambda f)dt
		+  O_A\left(  \frac{x}{L^{B'}} + \Delta \log L \right) \\
	&= - \sum_{z<m\le x^{1/4}} f_z(m) m \cdot\frac{1}{\Delta}\int_{\frac{x-\Delta}{m}}^{\frac{x}{m}}
			S_0(t;\Lambda f)dt   +  O_A\left(  \frac{x\log L}{L^{B'}} \right) \\
	& =  \frac{-1}{\Delta}\int_{\frac{x-\Delta}{x^{1/4}}}^{\frac{x}{z}} S_0(t;\Lambda f)
		\left( \sum_{\frac{x-\Delta}t<m\le\frac xt} f_z(m) m  \right)  dt
	+   O_A\left(   \frac{x\log L}{L^{B'}} \right)  .
}
For every $t\in[\sqrt{x},x/z]\subset[(x-\Delta)/x^{1/4},x/z]$ we apply Lemma \ref{fund-lemma} with $D=(x/t)^{1/3}\ge z^{1/3}$ and $y=z^{1/9}$ to obtain the estimate
\eq{complex-sieve}{
\left|\sum_{\frac{x-\Delta}t<m\le\frac xt} f_z(m) m\right|
	&\le  \frac xt \sum_{\substack{\frac{x-\Delta}t<m\le\frac xt\\P^-(m)>z^{1/9}}}1
	\le  \frac xt \sum_{\frac{x-\Delta}t<m\le\frac xt}(\lambda^+*1)(m)   \\
	&=  \frac xt \sum_{d\le(x/t)^{1/3}}  \lambda^+(d)  \left(\frac{\Delta/t}d+O(1)\right) \\
	&\ll  \frac{\Delta x}{t^2\log z} + \left(\frac{x}{t} \right)^{4/3} 
		\asymp \frac{\Delta x}{t^2\log z}  \asymp \frac{\Delta x \log L}{t^2\log x}   ,
}
since $\Delta/t\ge\sqrt{x/t}$ for $t\le x/z$. Consequently,
\eq{complex-pf-e9}{
\frac{S_0(x;\Lambda f)}x
	\ll_A  \frac{\log L}{\log x}\int_{\sqrt{x}}^x\frac{|S_0(t;\Lambda f)|}{t^2}dt
		+   \frac{\log L}{L^{B'}}
}
We want to relate the above integral with an average involving $S_k(t;\Lambda f)$. By partial summation, we have that
\als{
S_0(t;\Lambda f) 
	&= O(\sqrt{t})+\int_{\sqrt{t}}^t\frac1{(\log u)^k}  dS_k(u;\Lambda f)
	=  O(\sqrt{t})+\frac{S_k(t;\Lambda f)}{(\log t)^k} 
		+  \int_{\sqrt{t}}^t\frac{k\,S_k(u;\Lambda f)}{u(\log u)^{k+1}}du \\	
	&\ll  \sqrt{t}+ \frac{2^k |S_k(t;\Lambda f)| }{(\log x)^k} 
		+ \frac{5^k}{(\log x)^{k+1}} \int_{\sqrt{t}}^t\frac{|S_k(u;\Lambda f)|}{u} du \\	
}
for all $t\in[\sqrt{x},x]$. So, if we set $\sigma=1+1/\log x$, then
\al{
\int_{ \sqrt{x} }^x \frac{|S_0(t;\Lambda f)|}{t^2} dt  
	&\ll \frac{1}{x^{1/4}} + \frac{2^k}{(\log x)^k} \int_{ \sqrt{x}}^x \frac{|S_k(t;\Lambda f)|}{t^2} dt
		+ \frac{5^k}{(\log x)^{k+1}} 
			\int_{ \sqrt{x}  }^x\int_{\sqrt{t}}^t\frac{ |S_k(u;\Lambda f)| }{ut^2}   du  dt \nn
	&\le \frac{1}{x^{1/4}} 
		+ \frac{2^k}{(\log x)^k}  \int_{  \sqrt{x} }^x \frac{ |S_k(t;\Lambda f)| }{t^2} dt
		+\frac{5^k}{(\log x)^{k+1}}  \int_{ \sqrt[4]{x} }^x \frac{ |S_k(u;\Lambda f)| }{u^2} du \nn
	&\ll \frac{1}{x^{1/4}} + \frac{5^k}{(\log x)^{k}} 	
		\int_{ \frac{\log x}{4}}^{\log x}   \frac{ |S_k(e^w;\Lambda f)| }{e^w} dw  \nn
	&\le \frac{1}{x^{1/4}} + \frac{5^kI_k(\sigma;\Lambda f)}{(\log x)^k} 
		 \left( \int_{ \frac{\log x}{4}}^{\log x}  e^{ \frac{2w}{\log x} } dw \right)^{\frac{1}{2}}
		\ll \frac{1}{x^{1/4}} + \frac{5^kI_k(\sigma;\Lambda f)}{(\log x)^{k-\frac{1}{2}}} ,
		\label{complex-CS}
}
by the Cauchy-Schwarz inequality. Inserting this estimate into \eqref{complex-pf-e9}, we deduce that
\[
\frac{S_0(x;\Lambda f)}x
	\ll_A  \frac{(\log L) I_k(\sigma; \Lambda f) }{(\log x)^{k+ \frac{1}{2}}}
		+   \frac{\log L}{L^{B'}} .
\]
So estimating $I_k(\sigma;\Lambda f)$ by Proposition \ref{complex-prop-3} implies that
\[
\frac{ S_0(x;\Lambda f) }x \ll_A \frac{(\log L)^2}{L^{B'}}  +  \frac{1}{T},
\]
and Theorem \ref{complex}(a) follows.
\end{proof}


\begin{proof}[Proof of Theorem \ref{power}(a)] We follow a similar argument with the one leading to Theorem \ref{complex}(a). Without loss of generality, we may assume that $Q$ is large enough. Let $x\ge Q$ and $k\in\SN$, and set
\[
L = \min\left\{ \sqrt{\log x} ,  \frac{L_Q(1,f) \log x }{\log Q} \right\}
\quad\text{and}\quad
z = \max\left\{ Q^4 , x^{1/ L } \right\}.
\]
We will show that 
\[
S_0(x; L) \ll x e^{-cL},
\]
for some constant $c=c(\delta)$. The theorem then follows, since $\eta = L_Q(1,f) \asymp (1-\beta)\log Q$, by Theorem \ref{siegel}.

Note that $L_Q(1,f)\ll1$, by \eqref{power-der-0}. So $L\ll \log x/\log Q$ and consequently  $\log z\asymp (\log x)/L$. Moreover, we may assume that $L$ is large enough, so that $z\le x^{1/8}$; otherwise, the desired result holds trivially. Set $f_z(n)=f(n)$ if $P^-(n)>z$ and $f_z(n)=0$ otherwise. Relation \eqref{small-power} and Lemma \ref{Lbound-1}(a) imply that 
\[
S_0(w; f_z)   
	\ll \frac{(\log z) w^{1- 1/(2\log z)  }}{(\log w)^2} +  \frac{w^{1- 1/(2\log z) } }{\log z}
	\ll  \frac{w^{1- 1/ (2\log z)  } } {\log z}   \ll \frac{w^{1- 1/(3\log z)} }{\log w}   \quad(w\ge z).
\]
Together with partial summation, this gives us that
\[
S_1(w; f_z)   
	\ll w^{1- 1/(3\log z) }   \quad(w\ge z).
\]
Inserting the above estimates into \eqref{complex-hyperbola}, we find that
\[
\sum_{m\le x^{1/4}} f_z(m)S_0(x/m;\Lambda f_z)
	\ll x^{1- 1/(3\log z) }
		+\frac{x^{1- 1/(3\log z)} } {\log x}   \sum_{ d\le x^{3/4} } \frac{ \Lambda(d) }{ d^{1- 1/(3\log z)} }  
	\ll x^{1- 1/(12\log z)}   ,
\]
by Chebyshev's estimate $\sum_{d\le t}\Lambda(d)\ll t$ and partial summation. So following the argument leading to \eqref{remove z}, we deduce that
\[
S_0(x;\Lambda f) = - \sum_{z<m\le x^{1/4}} f_z(m) S_0(x/m; \Lambda f) 
	+ O \left( x^{1- 1/(12\log z) }  	\right) .
\]
Set $\Delta=x/ e^{c_1 L} \in [ x/ e^{c_1\sqrt{\log x}} ,  x/2]$, for some constant $c_1>0$. Note that $\Delta\ge x/\sqrt{z}$, provided that $c_1$ is small enough, since $\log z \asymp (\log x)/L \ge L$. Thus arguing as in \eqref{insert D} and then as in \eqref{complex-sieve} implies that
\als{
S_0(x;\Lambda f)
	& =  \frac{-1}{\Delta}\int_{\frac{x-\Delta}{x^{1/4}}}^{\frac{x}{z}} S_0(t;\Lambda f)
		\left( \sum_{\frac{x-\Delta}t<m\le\frac xt} f_z(m) m \right) dt
	+   O \left( x^{1- 1/(12\log z) }  + L\Delta \right) \\
	& \ll  \frac{x}{\log z} \int_{\sqrt{x}}^x\frac{|S_0(t;\Lambda f)|}{t^2}dt
		+   \frac{x}{ e^{c_2L}}
		\asymp \frac{x L}{\log x} \int_{\sqrt{x}}^x\frac{|S_0(t;\Lambda f)|}{t^2}dt
		+   \frac{x}{ e^{c_2L}},
}
since $\Delta/t\ge\sqrt{x/t}$ for $t\le x/z$. Combining the above estimate with \eqref{complex-CS} and Proposition \ref{power-prop}, we deduce that
\als{
\frac{S_0(x;\Lambda f)}x
	&\ll  L \cdot \frac{ 5^k I_k(\sigma; \Lambda f) }{(\log x)^{k+ \frac{1}{2}}}   +    e^{ - c_2L} 
		\ll L \cdot \left\{ \left( \frac{c_3k\log Q}{L_Q(1,f)\log x }  \right)^{  k + \frac{1}{2} }  
		+ \left( \frac{c_3k^2}{\log x} \right)^{k+ \frac{1}{2}}  \right\} 
		 +  e^{- c_2L}   ,
}
for some constant $c_3=c_3(\delta)\ge1$, where $\sigma=1+1/\log x$. Choosing $k = \fl{L/(e c_3)}$ in the above inequality, so that
\[
\frac{c_3k\log Q}{L_Q(1,f)\log x } \le \frac{L\log Q}{e L_Q(1,f)\log x} \le \frac{1}{e}
	\quad\text{and}\quad
\frac{c_3 k^2}{\log x}\le \frac{L^2}{e^2c_3\log x} \le \frac{1}{e^2},
\]
we conclude that
\[
\frac{S_0(x; \Lambda f)}{x} \ll L\cdot e^{- L/(ec_3)  } + e^{-c_2 L},
\]
which proves Theorem \ref{power}(a).
\end{proof}


\section*{Acknowledgements}
I would like to thank Andrew Granville for many fruitful conversations on the subject of multiplicative functions, and for several comments and suggestions, particularly, for drawing my attention to the proof of the prime number theorem given in \cite[pp. 40-42]{IK}, as well as for discussions that led to an improvement of Theorems \ref{small}(b) and \ref{complex-dist}. Moreover, I am grateful to both an anonymous referee and Andrew Granville, who suggested the idea of using the convolution $1*f*1*\overline{f}$ in the proof of part (b) of Theorem \ref{small}. The previous proof, which was similar to the proof of part (a), was substantially more complicated and yielded weaker results. Finally, I would like to thank K. Soundararajan and G\'erald Tenenbaum for some helpful comments.

This paper was largely written while I was a postdoctoral fellow at the Centre de recherches math\'ematiques at Montr\'eal, which I would like to thank for the financial support.


\bibliographystyle{alpha}

\end{document}